\newcommand{\change}[1]{#1}
\newcommand{\q}[2]{#2}
\newcommand{\red}[1]{{\color{red}#1}}
\newcommand{\blue}[1]{{\color{blue}#1}}
\newcommand{\norm}[1]{\left\lVert#1\right\rVert}
\newcommand{\mat}[1]{\underline{\mathbf{#1}}}
\newcommand{\matcal}[1]{\underline{\bm{\mathcal{#1}}}}
\newcommand\acclrvec[1]{\accentset{\,\leftrightarrow}{#1}}	
\newcommand{\blocktensor}[1]{\acclrvec{{\mathbf #1}}}	
\newcommand{\Nabla} {\vec{\nabla}}
\newcommand{\numfluxb}[1]{\hat{\mathbf{#1}} }
\newcommand{\numflux}[1]{\hat{{#1}} }
\newcommand\threeMatrix[1]{\underline{ #1}}				
\newcommand{\ec}{{\mathrm{EC}}}			
\def\d{\mathrm{d}}
\newcommand{\bigpartialderiv}[2]{ \frac{\partial {#1}}{\partial {#2} } }
\newcommand\stateG[1]{\boldsymbol #1}			
\newcommand{\noncon}{\stateG{\Upsilon}}	
\newcommand{\entVar}{{\mathbf{w}}}
\newcommand\state[1]{\mathbf{#1}}
\newcommand{\supEuler}{{\mathrm{Euler}}}
\newcommand{\supMHD}{{\mathrm{mMHD}}}
\newcommand{\supGLM}{{\mathrm{GLM}}}
\newcommand{\Powell}{{\mathrm{GP}}}
\newcommand{\Jan}{\stateG{\Phi}}
\newcommand{\numnonconsD}[1]{ #1^{\diamond} }
\newcommand{\numnonconsS}[1]{ #1^{\star} }
\newcommand{\avg}[1]{\left\{\hspace*{-3pt}\left\{#1\right\}\hspace*{-3pt}\right\}}
\newcommand{\jump}[1]{\ensuremath{\left\llbracket #1 \right\rrbracket}}
\newcommand{\jumpR}[1]{\langle\hspace*{-2pt}\langle#1\rangle\hspace*{-2pt}\rangle}
\newcommand{\Emagbar}{\overline{E}_{\mathrm{mag}}^2}
\newcommand{\betaavg}{\overline{\beta^2}}
\newcommand{\rholn}{\rho_k^{\ln}}
\newcommand{\betaln}{\beta_k^{\ln}}
\newcommand{\pavg}{\overline{p}_k}
\newcommand{\Eline}{\overline{E}_k}
\newcommand{\uavg}{\overline{\|\vec{v}_k\|^2}}
\newcommand{\hydroEner}{\mathcal{E}}
\newtheorem{lemma}{Lemma}
\newtheorem{remark}{Remark}
\begin{document}

\let\WriteBookmarks\relax
\def\floatpagepagefraction{1}
\def\textpagefraction{.001}

\shorttitle{Entropy-Stable DG for Ideal Multi-Ion MHD}
\shortauthors{Rueda-Ramírez~et~al.}

\title [mode = title]{An Entropy-Stable Discontinuous Galerkin Discretization of the Ideal Multi-Ion Magnetohydrodynamics System}

\author[1,2]{Andrés M. Rueda-Ramírez}[
                         orcid=0000-0001-6557-9162]
 \cormark[1]
 \ead{aruedara@uni-koeln.de}

 \credit{Conceptualization, Methodology, Software, Validation, Formal analysis, Data Curation, Writing - Original Draft, Visualization}

\author[1]{Aleksey Sikstel}
 [orcid=0000-0001-5921-7050]
 \ead{a.sikstel@uni-koeln.de}
 \credit{Formal analysis, Validation, Writing - Original Draft}

\author[1,3]{Gregor J. Gassner}
 [orcid=0000-0002-1752-1158]
 \ead{ggassner@uni-koeln.de}
 \credit{Conceptualization, Methodology, Writing - Original Draft}

\address[1]{Department of Mathematics and Computer Science, University of Cologne, Weyertal 86-90, 50931 Cologne, Germany}

\address[2]{Applied and Computational Mathematics, RWTH Aachen University, Schinkelstraße 2, 52062 Aachen, Germany}

\address[3]{Center for Data and Simulation Science, University of Cologne, 50931 Cologne, Germany}

\cortext[cor1]{Corresponding author}

\begin{abstract}
In this paper, we present an entropy-stable (ES) discretization using a nodal discontinuous Galerkin (DG) method for the ideal multi-ion magneto-hydrodynamics (MHD) equations.

We start by performing a continuous entropy analysis of the ideal multi-ion MHD system, described by, e.g., [Toth (2010) Multi-Ion Magnetohydrodynamics] \cite{Toth2010}, which describes the motion of multi-ion plasmas with independent momentum and energy equations for each ion species.
Following the continuous entropy analysis, we propose an algebraic manipulation to the multi-ion MHD system, such that entropy consistency can be transferred from the continuous analysis to its discrete approximation.
Moreover, we augment the system of equations with a generalized Lagrange multiplier (GLM) technique to have an additional cleaning mechanism of the magnetic field divergence error.

We first derive robust entropy-conservative (EC) fluxes for the alternative formulation of the multi-ion GLM-MHD system that satisfy a Tadmor-type condition and are consistent with existing EC fluxes for single-fluid GLM-MHD equations.
Using these numerical two-point fluxes, we construct high-order EC and ES DG discretizations of the ideal multi-ion MHD system using collocated Legendre--Gauss--Lobatto summation-by-parts (SBP) operators.
The resulting nodal DG schemes satisfy the second-law of thermodynamics at the semi-discrete level, while maintaining high-order convergence and local node-wise conservation properties.

We demonstrate the high-order convergence, and the EC and ES properties of our scheme with numerical validation experiments. Moreover, we demonstrate the importance of the GLM divergence technique and the ES discretization to improve the robustness properties of a DG discretization of the multi-ion MHD system by solving a challenging magnetized Kelvin-Helmholtz instability problem that exhibits MHD turbulence.
\end{abstract}



\begin{keywords}
Multi-Ion Magneto-Hydrodynamics
\sep 
Entropy Stability
\sep 
Discontinuous Galerkin Methods
\sep 
Divergence Cleaning
\end{keywords}

\maketitle

\section*{Connection to Prof. Godunov's work} On the occasion of the Journal of Computational Physics' special issue in honor of Prof. Sergei Godunov's seminal work, this paper draws a significant parallel to his pioneering numerical analysis of non-conservative hyperbolic partial differential equations with involutions.
In particular, \citet{godunov1972symmetric} showed the necessity of incorporating of the now-called Godunov-Powell non-conservative term \cite{powell1994approximate,powell1999solution}, which is proportional to the divergence of the magnetic field, along with a strictly convex entropy function, for the symmetrization and entropy consistency of the single-fluid MHD system.
In this paper, we build upon Godunov's work and propose an alternative formulation of the multi-ion MHD system that allows the derivation of a strictly convex entropy function and leads to the generalization of the Godunov-Powell non-conservative term to multiple ion species.
This formulation allows us to derive entropy-consistent finite volume and discontinuous Galerkin discretizations of the multi-ion MHD equations.

\section{Introduction} \label{sec:intro}

The modeling and numerical simulation of complex plasma dynamics is pivotal in advancing our understanding of various physical phenomena that occur in astrophysics \cite{Stone2008, Mignone2012, mignone2010high}, space physics \cite{glocer2009multifluid, rubin2015self, rubin2014comet}, nuclear fusion reactors \cite{ghosh2019multispecies, rambo1995comparison}, among others. 
One particularly comprehensive framework for describing the intricate behavior of multi-species plasmas is encapsulated in the multi-ion magnetohydrodynamics (MHD) equations, as described by, e.g., \citet{Toth2010}.
These equations account for the individual dynamics of multiple ion species, incorporating distinct mass, momentum and energy equations for each species within a unified MHD framework.

Two physical constraints to the multi-ion MHD system that are relevant to this paper and are not explicitly built into the equations are:
\begin{enumerate}
    \item The involution constraint that dictates the divergence free condition on the magnetic field, $\Nabla \cdot \vec{B} = 0$.
    \item The second law of thermodynamics, which states that the total thermodynamic entropy of a closed system does not decrease in time.
\end{enumerate}

Numerous techniques exist for addressing the divergence-free constraint in numerical discretizations of the ideal Magnetohydrodynamics (MHD) system, such as those found in \cite{Munz2000,Dedner2002,balsara2021globally,li2005locally,balsara2004second,fey2003constrained,balsara2015divergence,balsara2009efficient}. 
In this work, we adopt the divergence cleaning method proposed by \citet{Munz2000} and \citet{Dedner2002} and extend them to the multi-ion MHD system. 
This approach expands the multi-ion MHD system with a generalized Lagrange multiplier (GLM), i.e. an additional auxiliary quantity, which is advected and damped to minimize divergence errors. 
Given that the GLM technique reduces, but does not entirely eliminate, divergence errors, it becomes imperative for the extended system to incorporate the Godunov-Powell non-conservative term \cite{godunov1972symmetric,powell1994approximate,powell1999solution}. 
The emergence of the Godunov-Powell term from Maxwell's equations, specifically when $\Nabla \cdot \vec{B} \ne 0$, is pivotal for symmetrizing the system of Partial Differential Equations (PDEs). This symmetrization is crucial to satisfying the second law of thermodynamics in the presence of non-vanishing divergence errors \cite{Derigs2018,Chandrashekar2016}.

The discontinuous Galerkin (DG) method is a popular method to discretize advection-dominated equations \cite{Wang2013High, Cockburn2000}, such as the multi-ion MHD equations, as it provides arbitrary high-order accuracy, a very local data dependency foot print that favors parallelization, and can be readily used in 3D curvilinear and unstructured meshes \cite{Hindenlang2012, Krais2019, ferrer2023high}.
However, the DG method might suffer instability issues in under resolved strongly nonlinear simulations due to aliasing-driven instabilities incurred by the insufficient integration of the highly nonlinear fluxes of the equation, and in the presence of very steep gradients or discontinuities.
In this paper, we will focus on enhancing the high-order DG scheme to deal with aliasing-driven instabilities in multi-ion MHD problems. 

An effective approach for mitigating aliasing-driven instabilities, inspired by the work of \citet{Fisher2013}, \citet{Gassner2013}, and \citet{Carpenter2014}, leverages the summation-by-parts (SBP) property inherent in certain Discontinuous Galerkin (DG) schemes. This property serves as a discrete analogue to integration-by-parts, as discussed in, for instance, \cite{svard2014,fernandez2014review}.
The SBP operators' capacity to mimic integration-by-parts at the discrete level becomes a valuable tool for constructing entropy-stable discretizations, as highlighted by \cite{Fisher2013,Fisher2013a,Carpenter2014}. 
Additionally, it facilitates the development of other nonlinearly stable discretizations based on split forms of the governing equations, as demonstrated by \cite{Gassner2013,Gassner2016}.
Specifically, entropy stability represents a form of nonlinear stability intricately linked to the adherence of the numerical scheme to the second law of thermodynamics. 
This strategic utilization of SBP operators has proven instrumental in enhancing the robustness of DG methods across various applications, including but not limited to the shallow water equations, e.g., \cite{wintermeyer2017entropy}, the incompressible Navier-Stokes equations, e.g., \cite{manzanero2020entropy}, the compressible Navier-Stokes equations, e.g., \cite{Gassner2018}, multi-phase fluid equations, e.g.,  \cite{Renac2019,manzanero2020entropy2,coquel2021entropy}, and the Generalized Lagrange Multiplier Magnetohydrodynamics (GLM-MHD) system \cite{Bohm2018}.
In this work, we use a nodal (collocated) variant of the DG method known as the discontinuous Galerkin spectral element method (DGSEM) \cite{black1999conservative,kopriva2009implementing} on Legendre--Gauss--Lobatto (LGL) points, which satisfies the SBP property \cite{Gassner2013}.

The construction and formulation of entropy-stable SBP discretizations initiates with a comprehensive exploration of the entropic properties of the continuous problem, i.e., the PDE of interest. A continuous entropy analysis of the governing equation is needed to discern the entropy pair: the mathematical entropy and the induced entropy flux. The mathematical entropy, a strictly convex function, intricately depends on all system state variables. 
In scenarios where the PDE captures physical phenomena, it exhibits a linear dependence on thermodynamic entropy. In the absence of shocks, viscous, and resistive effects, manipulation of the governing equations allows the derivation of a scalar conservation law for the mathematical entropy evolved by the divergence of the corresponding entropy flux. However, in the presence of shocks, viscous, or resistive effects, the mathematical entropy no longer adheres to a conservation law but rather to an inequality - (mathematical) entropy is decaying in time. This inequality, reflective of the second law of thermodynamics, underscores the behavior of entropy under varying physical conditions.

The second step to construct high-order entropy-stable SBP discretizations for nonlinear PDEs is to find finite-volume (FV) EC two-point fluxes and non-conservative terms for the governing equations.
When these fluxes are used in a two-point FV discretization of the governing equations, the semi-discrete scheme mimics the scalar entropy conservation law of the system that is expected for smooth and inviscid solutions.
High-order EC SBP schemes can be readily obtained by plugging in the EC fluxes into a high-order split-form framework, see, e.g., \cite{Gassner2016, rueda2023entropy, rueda2024flux}.
These discretizations, however, are virtually dissipation free and hence offer no dissipative mechanism to get rid of under resolved modes, which is why some form of numerical dissipation via the numerical surface fluxes is introduced in a controlled way, such that the scheme is guaranteed entropy dissipative.

The paper and its main contributions are organized as follows.
In Section \ref{sec:equations}, we review the ideal multi-ion MHD system, then analyze its entropy properties and propose an algebraic manipulation of the system that guarantees that entropy consistency can be transferred from the continuous to the discrete level, and augment the system with the GLM divergence cleaning technique.
In Section \ref{sec:discretization}, we derive FV EC fluxes and non-conservative terms for the ideal multi-ion GLM-MHD system and construct a high-order ES DGSEM discretization using the EC fluxes for the volume integral and Rusanov fluxes for the surface integrals.
Since the multi-ion GLM-MHD system employed in this study represents a generalization of the single-fluid GLM-MHD system, we place particular emphasis on deriving EC and ES fluxes for the multi-ion GLM-MHD system that are consistent with the single-fluid fluxes of \citet{Derigs2018}. This consistency guarantees that in the special case of a single fluid, we recover the known ES discretizations exactly. Finally, in Section \ref{sec:results}, we validate the high-order convergence and entropy conservation/stability properties of the scheme with numerical tests. 
Moreover, we demonstrate numerically that the ES discretization and the GLM technique enhance the robustness of a high-order DG scheme by simulating a challenging multi-ion MHD Kelvin-Helmholtz instability problem.

\section{Governing Equations} \label{sec:equations}

In this section, we analyze the governing equations of multi-species plasmas.
We start by presenting the multi-species plasma model available in literature and analyzing its thermodynamic properties in Section \ref{sec:tothsystem}.
Next, in Section \ref{sec:modmultiion}, we propose an algebraic manipulation of the multi-ion MHD system, which makes it  possible to perform a continuous entropy analysis and show entropy consistency.
In Section \ref{sec:glm}, we augment the multi-ion MHD system with a GLM divergence cleaning technique and analyze the entropy properties of the augmented system.
In the last part of this section, we write a ``quasi-conservative'' form of the multi-ion MHD system and the one-dimensional multi-ion MHD equation, as these forms will be usefull to obtain our numerical schemes.
We place particular emphasis on obtaining governing equations that are consistent with the single-fluid GLM-MHD equations used by the authors in previous studies.

The systems of PDEs that we analyze in this section can be written as a sum of conservative and non-conservative terms,
\begin{equation}
    \frac{\partial \state{u}}{\partial t}
    + \Nabla \cdot \blocktensor{f} (\state{u})
    + \state{g} (\state{u})
    + \noncon (\state{u},\Nabla \state{u})
    =
    \state{s},
\end{equation}
where $\state{u}$ is the state vector, $\blocktensor{f}$ is a (conservative) flux function, $\state{g}(\state{u})$ is a source term that accounts for electromagnetic interactions of the different ion species, $\noncon$ is a non-conservative term that depends on the state quantities and their gradients, and $\state{s}$ is a source term that can account for several physical phenomena, such as gravitational forces, chemical reactions, ion-ion, ion-neutral and ion-electron collisions, among others.
For the entropy analysis presented in this paper we focus on the PDE parts, in particular the advective parts, and assume that the external source terms are zero, i.e., $\state{s} = \state{0}$.

\subsection*{Notation} \label{sec:notation}
As in previous works, we adopt the notation of \cite{Bohm2018,Gassner2018,RuedaRamirez2020,Rueda-Ramirez2020, rueda2023entropy} to deal with vectors of different nature. 
Spatial vectors are noted with an arrow (e.g. $\vec{x}=(x,y,z) \in \mathbb{R}^3$), state vectors are noted in bold (e.g. $\state{u}=(\rho, \rho \vec{v}, \rho E, \vec{B}, \psi)^T$), and block vectors, which contain a state vector in every spatial direction, are noted as
\begin{equation}
\blocktensor{f} =
\begin{pmatrix}
\mathbf{f}_1 \\ 
\mathbf{f}_2 \\
\mathbf{f}_3
\end{pmatrix}.
\end{equation}

The gradient of a state vector is hence a block vector,
\begin{equation}
\Nabla \mathbf{q} =
\begin{pmatrix}
\partial_x \mathbf{q} \\
\partial_y \mathbf{q} \\
\partial_z \mathbf{q} 
\end{pmatrix},
\end{equation}
and the gradient of a spatial vector is defined as the transpose of the outer product of this vector with the nabla operator,
\begin{equation}
\Nabla \vec{v} := 
\left( \Nabla \otimes \vec{v} \right)^T = 
\left( \Nabla \vec{v}^T \right)^T = 
\mat{L} =
\begin{pmatrix}
\bigpartialderiv{v_1}{x} & \bigpartialderiv{v_1}{y} & \bigpartialderiv{v_1}{z} \\
\bigpartialderiv{v_2}{x} & \bigpartialderiv{v_2}{y} & \bigpartialderiv{v_2}{z} \\
\bigpartialderiv{v_3}{x} & \bigpartialderiv{v_3}{y} & \bigpartialderiv{v_3}{z}
\end{pmatrix},
\end{equation}
where we remark that we note general matrices with an underline.

\q{c11_r2}{
We define the notation for the jump operator, arithmetic and logarithmic means between a left and right state, $a_L$ and $a_R$, as
\begin{equation}
\jump{a}_{(L,R)} := a_R-a_L, 
~~~~~~~~~ 
\avg{a}_{(L,R)} := \frac{1}{2}(a_L+a_R), 
~~~~~~~~ 
a^{\ln}_{(L,R)} := \jump{a}_{(L,R)}/\jump{\ln(a)}_{(L,R)},
\label{means}
\end{equation}
and recall that \change{both} averages\change{, $\avg{a}_{(L,R)}$ and $a^{\ln}_{(L,R)}$,} are symmetric with respect to the arguments $L$ and $R$. A numerically stable \change{and computationally efficient} procedure to evaluate the logarithmic mean \change{and its inverse} is given in \change{\cite{ranocha2023efficient} based on ideas first developed by} \citet{Ismail2009}.
}
 
\subsection{The Ideal Multi-Ion MHD System and its Thermodynamic Properties} \label{sec:tothsystem}

There are different variants of the multi-ion MHD equations of varying complexity.
In this work, we study the ideal multi-ion MHD system described by \citet{Toth2010}.
This non-conservative system of PDEs has been successfully employed to study phenomena in the solar system \cite{Toth2010, glocer2009multifluid, rubin2014comet, rubin2015self}.

The multi-ion magnetohydrodynamic (MHD) system proposed by \citet{Toth2010} is constructed through the formulation of conservation equations governing mass, momentum, and hydrodynamic energy for electrons and $N_i$ ion species. 
This formulation is based on the Euler equations of gas dynamics, supplemented by a Lorentz force term in the momentum and energy equation of the ions, and an induction equation describing the evolution of the magnetic field, derived from Maxwell's equations. 
To circumvent complexities associated with stiff terms dependent on the speed of light, Ampère's law is not directly solved for the evolution of the electric field. 
Instead, the electron momentum equation is streamlined by assuming negligible electron density compared to the ions, leading to the derivation of a generalized Ohm's law.
The obtained generalized Ohm's law is subsequently incorporated into the Lorentz force terms and the induction equation. A comprehensive derivation of this system is provided in \cite{Toth2010}. 
In our work, we adopt a version of the multi-ion MHD system, as presented in \cite{Toth2010}, wherein the explicit self-consistent solution for the electron hydrodynamic energy is not pursued. Instead, we employ a phenomenological expression for the electron pressure, denoted as $p_e$.

The multi-ion MHD model described by \citet{Toth2010} can be written compactly in non-conservative ``curl'' form as
\begin{equation} \label{eq:multi-ion_simp}
    \frac{\partial}{\partial t}
    \underbrace{
    \begin{pmatrix}
    \rho_k \\ \rho_k \vec{v}_k  \\ \hydroEner_k \\ \vec{B}
    \end{pmatrix}
    }_{\state{u}^{\rm{Toth}}}
    + \Nabla \cdot
    \underbrace{
    \begin{pmatrix} 
    \rho_k \vec{v}_k \\
    \rho_k \vec{v}_k\, \vec{v}_k^{\,T} + 
     p_k \threeMatrix{I}   \\
    \vec{v}_k\left(\hydroEner_k + p_k \right)  \\
    \threeMatrix{0} 
    \end{pmatrix}
    }_{\blocktensor{f}^{\supEuler}}
    +
    \underbrace{
    \begin{pmatrix}
    0 \\
    n_k q_k (\vec{v}^+ - \vec{v}_k) \times \vec{B} \\
    n_k q_k \vec{v}_k \cdot (\vec{v}^+ - \vec{v}_k) \times \vec{B}  \\
    \vec{0} 
    \end{pmatrix}
    }_{\state{g}}
    +
    \underbrace{
    \begin{pmatrix}
    0 \\
    - \frac{n_k q_k}{n_e e } \left( \vec{J} \times \vec{B} - \Nabla p_e \right) \\
     - \frac{n_k q_k \vec{v}_k}{n_e e} \cdot \left( \vec{J} \times \vec{B} - \Nabla p_e \right)  \\
    - \Nabla \times (\vec{v}^+ \times \vec{B})
    \end{pmatrix}
    }_{\noncon^{\rm{Toth}}}
    =
    \state{0},
\end{equation}
where $\rho_k$, $n_k$, $q_k$, $\vec{v}_k = (v_{k,1}, v_{k,2}, v_{k,3})$, and $\hydroEner_k$  denote the density, number density, charge, velocity, and hydrodynamic energy of the ion species $k$, respectively, $\vec{J}$ is the electric current, $\vec{B} = (B_1, B_2, B_3)$ is the magnetic field, $e$ is the elementary charge,
and the number density of electrons is obtained assuming quasi-neutrality,
\begin{equation} \label{eq:electronNumberDensity}
    n_e = \frac{1}{e} \sum_k n_k q_k.
\end{equation}
The charge-averaged ion velocity is defined as
\begin{equation}\label{eq:chargeAveragedVel}
\vec{v}^+ = \sum_k n_k q_k \vec{v}_k / (e n_e).
\end{equation}
We assume calorically perfect plasmas, for which the hydrodynamic energy is computed as
\begin{equation} \label{eq:hydroEnergy}
    \hydroEner_k = \frac{1}{2}\rho_k \left\|\vec{v}_k\right\|^2 + \frac{p_k}{\gamma_k -1},
\end{equation}
with the heat capacity ratio $\gamma_k:=c_{p,k}/c_{v,k}$ and the ion pressure $p_k$. All individual densities and pressures are positive, i.e. none of the species is in a vacuum state. 

As mentioned above, the electron pressure $p_e$ can be estimated with a phenomenological model. 
For instance, it can be computed as a fraction of the total ion pressure, e.g., $p_e = \alpha \sum_k p_k$, where $\alpha$ is heuristically determined \cite{Toth2010}, or it can be computed assuming a constant electron temperature \cite{glocer2009multifluid}.

We remark that we write the system only for species $k$ for compactness, but the multi-ion system considers $k=1,\ldots,N_i$. 
Furthermore, we note that the vacuum permeability has been omitted for readability, which implies a non-dimensionalization such that $\mu_0 =1$.

The density of each species is related to the number density through $\rho_k := n_k m_k$, where $m_k$ is the molecular mass of species $k$.
Furthermore, we compute the electric current with the non-relativistic Ampère's law, $\vec{J}=\Nabla \times \vec{B}$.
Therefore, we can rewrite \eqref{eq:multi-ion_simp} by removing some unknowns as
\begin{equation} \label{eq:multi-ion}
    \frac{\partial}{\partial t}
    \underbrace{
    \begin{pmatrix}
    \rho_k \\ \rho_k \vec{v}_k  \\ \hydroEner_k \\ \vec{B}
    \end{pmatrix}
    }_{\state{u}^{\rm{Toth}}}
    + \Nabla \cdot
    \underbrace{
    \begin{pmatrix} 
    \rho_k \vec{v}_k \\
    \rho_k \vec{v}_k\, \vec{v}_k^{\,T} + 
     p_k \threeMatrix{I}   \\
    \vec{v}_k\left(\hydroEner_k + p_k \right)  \\
    \threeMatrix{0} 
    \end{pmatrix}
    }_{\blocktensor{f}^{\supEuler}}
    +
    \underbrace{
    \begin{pmatrix}
    0 \\
    r_k \rho_k (\vec{v}^+ - \vec{v}_k) \times \vec{B} \\
    r_k \rho_k \vec{v}_k \cdot (\vec{v}^+ - \vec{v}_k) \times \vec{B}  \\
    \vec{0} 
    \end{pmatrix}
    }_{\state{g}}
    +
    \underbrace{
    \begin{pmatrix}
    0 \\
    - \frac{r_k \rho_k}{n_e e } \left( \Nabla \times \vec{B} \times \vec{B} - \Nabla p_e \right) \\
     - \frac{r_k \rho_k \vec{v}_k}{n_e e} \cdot \left( \Nabla \times \vec{B} \times \vec{B} - \Nabla p_e \right)  \\
    - \Nabla \times (\vec{v}^+ \times \vec{B})
    \end{pmatrix}
    }_{\noncon^{\rm{Toth}}}
    =
    \state{0},
\end{equation}
where the (constant) charge to mass ratio of each species is $r_k = q_k / m_k$.
We rewrite the quasi-neutrality constraint \eqref{eq:electronNumberDensity} and the charge-averaged velocity as \eqref{eq:chargeAveragedVel}
\begin{align} \label{eq:electronNumberDensity2}
    n_e e &= \sum_k \rho_k r_k,
\\
\label{eq:chargeAveragedVel2}
 \vec{v}^+ &= \sum_k r_k \rho_k \vec{v}_k / (e n_e).
\end{align}

We remark that \eqref{eq:multi-ion} cannot be written in conservation form.
Moreover, we remark that \eqref{eq:multi-ion} does not make any assumptions on the divergence of the magnetic field since the spatial derivatives in $\noncon^{\rm{Toth}}$ are written in ``curl'' form.

\paragraph{Thermodynamic properties of the system:}

To study the entropic properties of the governing equations, the conventional approach involves seeking a strictly convex mathematical entropy, denoted as $S$. 
This entropy is directly proportional to the thermodynamic entropy of the system and is carefully chosen to satisfy a conservation law for smooth solutions. 
The derivation of this conservation law involves contracting the PDE with entropy variables, defined as $\entVar \coloneqq \partial S / \partial \state{u}$.
The strict convexity of the mathematical entropy $S$ is needed for the symmetrization of the PDE, which imparts the system with a host of desirable properties, as elucidated in previous studies \cite{godlewski1998numerical, Chandrashekar2016}. 

A suitable mathematical entropy for the single-fluid MHD system is the entropy density with a switched sign divided by the quantity $(\gamma -1)$ \cite{barth1999numerical, Chandrashekar2016, Derigs2018}. \q{c14r2}{Thus, following the analysis done for \change{the} multi-component Euler \change{equations} in \cite{hantke2018analysis}, a natural choice for the mathematical entropy of the multi-ion system is the sum of the individual entropies of all species,}
\begin{equation} \label{eq:thermoEntropy}
    S = \sum_k \frac{-\rho_k s_k}{\gamma_k -1},
\end{equation}
where $s_k$ is the specific entropy of ion $k$,
\begin{align}\label{s_k-def}
    \begin{split}
        s_k &= \ln(p_k) - \gamma_k \ln(\rho_k) \\
            &= \ln\left((\gamma_k - 1) \left(\hydroEner_k - \frac{1}{2 \rho_k} \norm{\rho_k v_k}^2  \right)\right) - \gamma_k \ln(\rho_k).
    \end{split}
\end{align}

The entropy variables for this choice of mathematical entropy read
\begin{equation}
\entVar^{\rm{Toth}} = \frac{\partial S}{\partial \state{u}^{\rm{Toth}} } = \left(\frac{\gamma_k-s_k}{\gamma_k-1} - \beta_k \norm{\vec{v_k}}^2,~2\beta_k v_{k,1},~2\beta v_{k,2},~2\beta v_{k,3},~-2\beta_k,0, 0, 0\right)^T,
\label{eq:entvars_toth}
\end{equation}
where $\beta_k := \frac{\rho_k}{2p_k}$ is proportional to the temperature of the $k$-th species.

Unfortunately, \eqref{eq:thermoEntropy} is not strictly convex with respect to $\state{u}^{\rm{Toth}} =  (\rho_k, \rho_k \vec{v}_k, \hydroEner_k, \vec{B})^T$ as it only depends on the hydrodynamic states of species $k=1, \ldots, N_i$ and does not depend on the magnetic field $\vec{B}$. 
As a result, the last three entropy variables are zero, and hence the Hessian matrix $\matcal{H}^{-1} \coloneqq \partial \entVar^{\rm{Toth}} / \partial \state{u}^{\rm{Toth}}$ is singular.

It is possible to use $\entVar^{\rm{Toth}}$ from \eqref{eq:entvars_toth} to contract \eqref{eq:multi-ion} into a conservation law for $S$ \eqref{eq:thermoEntropy}.
However, since the transformation from state to entropy variables $\state{u}^{\rm{Toth}} \mapsto \entVar^{\rm{Toth}}$ \eqref{eq:entvars_toth} is not injective, it is not possible to use it to construct entropy-stable discretizations of \eqref{eq:multi-ion} that need the injective property.
For instance, the schemes in \cite{chan2019efficient, chan2019discretely, rueda2023entropy} require an injective transformation from state to entropy variables $\state{u} \mapsto \entVar$ and its inverse transformation $\entVar \mapsto \state{u}$.
Moreover, the fact that $S$ is not strictly convex with respect to the state quantities implies that the transformation $\state{u}^{\rm{Toth}} \mapsto \entVar^{\rm{Toth}}$ does not symmetrize the system \cite{godlewski1998numerical, barth1999numerical, Chandrashekar2016}.

In what follows, we  propose an equivalent formulation of the system \eqref{eq:multi-ion}, such that a thermodynamics-based mathematical entropy \eqref{eq:thermoEntropy} produces a one-to-one (injective) mapping between the state and entropy variables and becomes strictly convex with respect to the state quantities, as in the entropy-consistent single-fluid MHD equations \cite{Chandrashekar2016, Derigs2017, Derigs2018}.

\subsection{Alternative Formulation of the Ideal Multi-Ion MHD System and its Thermodynamic Properties} \label{sec:modmultiion}

The main difference between \eqref{eq:multi-ion} and the entropy-consistent single-fluid MHD system employed in prior works \cite{Chandrashekar2016, Derigs2017, Derigs2018} under the one-ion-species limit lies in the fact that the entropy-consistent single-fluid MHD equations characterize the evolution of the total energy encompassing both hydrodynamic and magnetic components, as opposed to exclusively the hydrodynamic energy.

To address this difference, we propose an algebraic manipulation, i.e., a reformulation of the multi-ion MHD system based on another set of state variables. We introduce an artificial variable $E_k$ replacing the hydrodynamic energy variable:
\begin{equation} \label{eq:hydroEnerPlusMag}
E^{\mathrm{mod}}_k = \hydroEner_k + \underbrace{\frac{1}{2} \norm{\vec{B}}^2}_{E_{\rm{mag}}}.
\end{equation}

Note that $E_k$ serves as a mathematical state variable and has no direct physical interpretation. For instance, the physical quantity total energy of the whole system can be computed with these variables as
\begin{equation}
E = \sum_{k=1}^{N_i} \hydroEner_k + E_{\rm{mag}} = \sum_{k=1}^{N_i} E^{\mathrm{mod}}_k - (N_i - 1) E_{\rm{mag}}.
\end{equation}
The equation for the evolution of $E_k$ can be obtained from \eqref{eq:multi-ion} by a brief calculation
\begin{align*}
    \bigpartialderiv{E^{\mathrm{mod}}_k}{t} 
    =
    \bigpartialderiv{\hydroEner_k}{t} + \bigpartialderiv{E_{\rm{mag}}}{t}
    =
    \bigpartialderiv{\hydroEner_k}{t} +  \frac{1}{2} \bigpartialderiv{\norm{B}^2}{t}
    =
    \bigpartialderiv{\hydroEner_k}{t} + \vec{B} \cdot \bigpartialderiv{\vec{B}}{t}.
\end{align*}
and \eqref{eq:multi-ion} is then rewritten as
\begin{align} \label{eq:multi-ion_mod}
    \frac{\partial}{\partial t}
    \underbrace{
    \begin{pmatrix}
    \rho_k \\ \rho_k \vec{v}_k  \\ E^{\mathrm{mod}}_k \\ \vec{B}
    \end{pmatrix}
    }_{\state{u}^{\rm{mod}}}
    + \Nabla \cdot
    \underbrace{
    \begin{pmatrix} 
    \rho_k \vec{v}_k \\
    \rho_k \vec{v}_k\, \vec{v}_k^{\,T} + 
     p_k \threeMatrix{I}   \\
    \vec{v}_k\left(\hydroEner_k + p_k \right)  \\
    \threeMatrix{0} 
    \end{pmatrix}
    }_{\blocktensor{f}^{\supEuler}}
    &+
    \underbrace{
    \begin{pmatrix}
    0 \\
    r_k \rho_k (\vec{v}^+ - \vec{v}_k) \times \vec{B} \\
    r_k \rho_k \vec{v}_k \cdot (\vec{v}^+ - \vec{v}_k) \times \vec{B}  \\
    \vec{0} 
    \end{pmatrix}
    }_{\state{g}}
    \nonumber \\
    &+
    \underbrace{
    \begin{pmatrix}
    0 \\
    - \frac{r_k \rho_k}{n_e e } \left( \Nabla \times \vec{B} \times \vec{B} - \Nabla p_e \right) \\
     - \frac{r_k \rho_k \vec{v}_k}{n_e e} \cdot \left( \Nabla \times \vec{B} \times \vec{B} - \Nabla p_e \right) - \vec{B} \cdot \Nabla \times (\vec{v}^+ \times \vec{B})  \\
    - \Nabla \times (\vec{v}^+ \times \vec{B})
    \end{pmatrix}
    }_{\noncon^{\rm{mod}}}
    =
    \state{0},
\end{align}

We remark that \eqref{eq:multi-ion_mod} reduces to the entropy-consistent single-fluid MHD system of \cite{Chandrashekar2016, Derigs2017, Derigs2018} when a single ion species is used.
The only difference is that the non-conservative spatial derivative terms are given in curl form instead of including a conservative  magneto-hydrodynamic term and the Godunov-Powell non-conservative term.

As before, we choose the total mathematical entropy as the sum of the individual entropies of all species,
\eqref{eq:thermoEntropy},
\begin{equation*}
    S = \sum_k \frac{-\rho_k s_k}{\gamma_k -1},
\end{equation*}
where $s_k$ can now be written in terms of the new state variables, $\state{u}^{\rm{mod}} = (\rho_k, \rho_k \vec{v}_k, E^{\mathrm{mod}}_k, \vec{B})^T$, as
\begin{align}\begin{split}
    s_k &= \ln(p_k) - \gamma_k \ln(\rho_k) \\
        &= \ln\left((\gamma_k - 1) \left(E^{\mathrm{mod}}_k - \frac{1}{2 \rho_k} \norm{\rho_k \vec{v}_k}^2 - \frac{1}{2} \norm{\vec{B}}^2  \right)\right) - \gamma_k \ln(\rho_k).
        \end{split}
\end{align}
Since the thermodynamic entropy now depends on the magnetic field, we obtain the entropy variables
\begin{equation}\label{eq:entvars_mod}
\entVar^{\rm{mod}} 
= \frac{\partial S}{\partial \state{u}^{\rm{mod}}} 
= \left(\frac{\gamma_k-s_k}{\gamma_k-1} - \beta_k \norm{\vec{v_k}}^2,
    ~2\beta_k v_{k,1}, ~2\beta_k v_{k,2}, ~2\beta_k v_{k,3},
    ~-2\beta_k,
    ~2\beta_+ B_1,~2\beta_+B_2,~2\beta_+B_3\right)^T,
\end{equation}
with $\beta_+ = \sum_k \beta_k$.

The mathematical entropy $S$ is obtained as a sum of functions that are strictly convex with respect to the state variables $\state{u}^{\rm{mod}}$, due to the fact the individual densities and pressures are positive.
Since the whole set of state variables in $\state{u}^{\rm{mod}}$ is encompassed by the individual strict convex functions that we sum to obtain $S$, the resulting mathematical entropy is strictly convex with respect to $\state{u}^{\rm{mod}}$ (see, e.g., \cite{boyd2004convex}[Chapter 3.2: Operations that preserve convexity]).
Moreover, \eqref{eq:entvars_mod} is a bijective transformation $\state{u}^{\rm{mod}} \mapsto \entVar^{\rm{mod}}$,
whose inverse is obtained as follows: division by $\beta_k > 0$, that can directly be read from $\entVar^{\rm{mod}}$, yields the velocities $v_{k,1}, v_{k,2}, v_{k,3}$. Division by $\beta_+ = \sum_k \beta_k$, in turn, yields the magnetic field $B_1, B_2, B_3 $. With these quantities at hand, $s_k$ is recovered from the first component of $\entVar^{\rm{mod}}$. The relation $\beta_k = \frac{\rho_k}{2 p_k}$ and equation \eqref{s_k-def} provide, firstly, the values of density and pressure, and subsequently the remaining components of $\state{u}^{\rm{mod}}$.

For smooth solutions the contraction of \eqref{eq:multi-ion_mod} with \eqref{eq:entvars_mod} yields an auxiliary conservation law.
In particular, analyzing \eqref{eq:multi-ion_mod} term by term we obtain:
\begin{align*}
    (\entVar^{\mathrm{mod}})^T \Nabla \cdot \blocktensor{f}^{\supEuler} =&
    \Nabla \cdot \left[\sum_k \vec{v}_k \left( \frac{-\rho_k s_k}{\gamma_k - 1} \right) \right], & \textrm{(same as Euler)}\\
    (\entVar^{\mathrm{mod}})^T \state{g} =& \,\,0, \\
    (\entVar^{\mathrm{mod}})^T \noncon^{\rm{mod}} =& -2 \sum_k \beta_k \vec{B} \cdot \left( - \Nabla \times (\vec{v}^+ \times \vec{B})
    +\right)
    + 2 \beta^+ \vec{B} \cdot \left( - \Nabla \times (\vec{v}^+ \times \vec{B})
    +\right) = 0
\end{align*}
Therefore, for smooth solutions, the following entropy conservation law holds
\begin{equation}\label{eq:entropy_cons_law}
    \bigpartialderiv{S}{t} + \Nabla \cdot \vec{f}^S = 0,
\end{equation}
where the entropy flux is given by
\begin{equation}
\vec{f}^S = \sum_k \vec{v}_k \left( \frac{-\rho_k s_k}{\gamma_k - 1} \right).    
\end{equation}

\q{c35r3}{
\change{
\begin{remark}
    The approach used in this section shares some similarities with the strategy proposed by \citet{kumar2012entropy} for obtaining an entropy-stable discretization for a multi-fluid MHD model, which includes hydrodynamic equations for the electrons and a single ion species, along with the (linear) Maxwell equations. However, there are three key differences between our method and that of \citet{kumar2012entropy}.
:
    \begin{enumerate}
        \item \citet{kumar2012entropy} demonstrate that a strictly convex entropy function for their system can be formulated by adding the term $S_2 = \frac{1}{2} \norm{\vec{B}}^2 + \frac{1}{2} \norm{\vec{\mathbb{E}}}^2$ to the \textbf{mathematical entropy} of the hydrodynamic system ($S$), where $\mathbb{E}$ represents the electric field, and $S_2$ is the quadratic entropy associated with Maxwell's equations. In contrast, we show that a strictly convex entropy function for our system can be formulated by adding the term $E_{\rm{mag}} = \frac{1}{2} \norm{\vec{B}}^2$ to the \textbf{total hydrodynamic energy} of each ion species, while using the sum of the entropies of our ion species \eqref{eq:thermoEntropy} as the mathematical entropy.

        \item In the multi-ion MHD model, the electric field and the electron hydrodynamic state are not included as state variables in order to avoid the stiffness introduced by the speed of light and the small electron mass. As a result, the model features a non-linear induction equation, derived from the generalized Ohm's law (see, e.g., \cite{Toth2010}), and not a linear induction equation as in Maxwell's equations. Additionally, the multi-ion MHD system simultaneously solves for multiple ion species.

        \item Due to the absence of the electric field as a state variable, a similar approach to that of \citet{kumar2012entropy}  involves retaining the total hydrodynamic energies as state quantities and redefining the mathematical entropy as $S_K = S + E_{\rm{mag}}$. This leads to a different set of entropy variables,
\begin{equation*}
\entVar^K = \frac{\partial S_K}{\partial \state{u}} = \left(\frac{\gamma_k-s_k}{\gamma_k-1} - \beta_k \norm{\vec{v_k}}^2,~2\beta_k v_{k,1},~2\beta v_{k,2},~2\beta v_{k,3},~-2\beta_k,~B_1,~B_2,~B_3\right)^T.
\end{equation*}
Unfortunately, we have not been able to use $\entVar^K$ to contract the multi-ion MHD system into an entropy conservation law for smooth solutions.

    \end{enumerate} 
\end{remark}
}
}

\subsection{GLM Formulation and its Thermodynamic Properties} \label{sec:glm}
According to \cite[Eq. (13)]{Toth2010}  the \emph{total} momentum of the system \eqref{eq:multi-ion_mod} is conserved if the magnetic field is assumed to be divergence free, i.e. $\nabla\cdot \vec{B} =0$.  One possibility to construct a numerical scheme that evolves towards a solution satisfying the divergence-free constraint is the method of general Lagrange multipliers (GLM), introduced in \cite{Munz2000}. The GLM approach was first applied to the ideal MHD equations in \cite{Dedner2002} and, more recently, an entropy-stable version of the GLM-MHD formulation was derived and analyzed in \cite{Derigs2018}. 

As we will see below, the GLM technique does not only enhance the physical accuracy of magnetic field evolution but also significantly bolsters the robustness of computational schemes.
In this section, we will integrate the GLM technique into the multi-ion MHD framework, thereby evolving it into the multi-ion GLM-MHD system.
Furthermore, in the forthcoming sections, we will detail the derivation of both entropy-conservative and entropy-stable schemes tailored for the multi-ion MHD and multi-ion GLM-MHD systems, respectively.
Throughout the remainder of this paper, to facilitate better understanding, highlight the modularity of the GLM approach, and enhance the document's readability, we will denote GLM terms in \red{red}.

The method consists in modifying the PDE system by introducing a Lagrange multiplier, i.e.~an auxiliary scalar field, $\red{\psi}$,  that drives the magnetic field towards a divergence-free state. The evolution of the Lagrange multiplier is represented by a supplementary transport PDE. The additional terms are constructed such that they vanish as $\nabla \cdot \vec{B} = 0$, i.e. whenever the constraint is fulfilled.

In the following we \change{follow} the derivation in \cite{Derigs2018}. Firstly, we introduce the GLM-variable $\red{\psi}$ that evolves according to the  PDE
\begin{equation}
    {\color{red}\frac{\partial \psi}{\partial t} + \Nabla\cdot(c_h\vec{B}) - (\Nabla\psi)\cdot \vec{v}^+ = 0},
\end{equation}
where $\red{c_h}$ denotes the hyperbolic divergence cleaning speed and the transport speed of  ${\color{red}\psi}$ is set to the charge averaged velocity $\vec{v}^+$ in accordance with the velocity acting on $\Vec{B}$ in the PDE~\eqref{eq:multi-ion_mod}. 
Secondly, we augment the dynamics of the magnetic field:
\begin{equation}
    \frac{\partial B}{\partial t} = - \Nabla \times (\vec{v}^+ \times \vec{B}) + {\color{red}\Nabla\cdot(c_h\psi\vec{I})}.
\end{equation}
The total energy  $E_k$, cf.~\eqref{eq:hydroEnerPlusMag}, includes the magnetic energy which now depends on the GLM-variable ${\color{red}\psi}$. Thus, any variation of ${\color{red}\psi}$ causes erroneous variation of the hydrodynamic energy $\hydroEner_k$ by introducing non-physical thermal effects.  Similar to \cite{Derigs2018} we take this into account  by considering the alternative total energy:
\begin{equation}
    E_{k} :=  \hydroEner_k +\frac12 \|\vec{B}\|^2 + {\color{red}\frac12 \psi^2}.
\end{equation}
Its time derivative is computed as 
\begin{equation}
    \frac{\partial  E_{k }}{\partial t} 
    = \frac{\partial \hydroEner_k}{\partial t} 
    + \vec{B}\cdot \frac{\partial  \vec{B}}{\partial t} 
    \red{+ \psi \frac{\partial \psi}{\partial t}}.
\end{equation}
Hence, the multi-ion GLM-MHD system reads:
\begin{align} \label{eq:multi-ion_mod_glm}
    \frac{\partial}{\partial t}
    \underbrace{
    \begin{pmatrix}
    \rho_k \\ \rho_k \vec{v}_k  \\ E_{k} \\ \vec{B} \\ 
    \red{\psi}
    \end{pmatrix}
    }_{\state{u}}
    + \Nabla \cdot
    \underbrace{
    \begin{pmatrix} 
    \rho_k \vec{v}_k \\
    \rho_k \vec{v}_k\, \vec{v}_k^{\,T} + 
     p_k \threeMatrix{I}   \\
    \vec{v}_k\left(\hydroEner_k + p_k \right)  \\
    \threeMatrix{0} \\ 
    \vec{0}
    \end{pmatrix}
    }_{\blocktensor{f}^{\supEuler}}
    &+
    \red{
    \Nabla \cdot
    \underbrace{
    \begin{pmatrix} 
    \vec{0} \\
    \threeMatrix{0} \\
    c_h \psi \vec{B} \\
    c_h \psi \threeMatrix{I} \\ c_h \vec{B}\\
    \end{pmatrix}
    }_{\blocktensor{f}^{\supGLM}}
    }
    +
    \underbrace{
    \begin{pmatrix}
    0 \\
    r_k \rho_k (\vec{v}^+ - \vec{v}_k) \times \vec{B} \\
    r_k \rho_k \vec{v}_k \cdot (\vec{v}^+ - \vec{v}_k) \times \vec{B}  \\
    \vec{0} \\
    0
    \end{pmatrix}
    }_{\state{g}}
    \nonumber \\
    &+
    \underbrace{
    \begin{pmatrix}
    0 \\
    - \frac{r_k \rho_k}{n_e e } \left( \Nabla \times \vec{B} \times \vec{B} - \Nabla p_e \right) \\
     - \frac{r_k \rho_k \vec{v}_k}{n_e e} \cdot \left( \Nabla \times \vec{B} \times \vec{B} - \Nabla p_e \right) - \vec{B} \cdot \Nabla \times (\vec{v}^+ \times \vec{B})  \\
    - \Nabla \times (\vec{v}^+ \times \vec{B}) \\
    0
    \end{pmatrix}
    }_{\noncon^{\rm{mod}}}
    +
    \red{
    \underbrace{
    \begin{pmatrix}
        \vec{0} \\
        \threeMatrix{0}\\
        \vec{v}^+ \psi \\
        \threeMatrix{0} \\
        \vec{v}^+
    \end{pmatrix}
    \cdot
    \Nabla \psi
    }_{\noncon^{\supGLM}}
    }
    =
    \state{0}.
\end{align}

The specific entropy of each species is now given by:
\begin{align}\begin{split}
    s_k &= \ln(p_k) - \gamma_k \ln(\rho_k) \\
        &= \ln\left((\gamma_k - 1) \left(E^{\supGLM}_{ k} - \frac{1}{2 \rho_k} \norm{\rho_k v_k}^2 - \frac{1}{2} \norm{B}^2 - {\color{red}\frac12\psi^2}  \right)\right) - \gamma_k \ln(\rho_k).
        \end{split}
\end{align}
and a simple computation yields the entropy variables
\begin{align}
    \entVar &= 
    \left( (\entVar^\text{mod})^T, {\color{red}2\beta_+\psi }\right)^T
    \nonumber\\
    &=
    \left(\frac{\gamma_k-s_k}{\gamma_k-1} - \beta_k \norm{\vec{v_k}}^2,
    ~2\beta_k v_{k,1}, ~2\beta_k v_{k,2}, ~2\beta_k v_{k,3},
    ~-2\beta_k,
    ~2\beta_+ B_1,~2\beta_+B_2,~2\beta_+B_3,
    {\color{red}2\beta_+\psi }\right)^T.
\end{align}

The last component of the entropy variables, that originates from the GLM modification, does not play any role while contracting the non-GLM parts of the PDE~\eqref{eq:multi-ion_mod_glm}. Therefore, to obtain an entropy conservation law  \eqref{eq:entropy_cons_law} it is sufficient to check that
\begin{align}
    &\frac{1}{2\beta_+}\entVar^T\,\blocktensor{f}^{\supGLM} = {\color{red}-\Nabla \cdot \left(c_h\psi\vec{B}\right) 
    + \vec{B} \cdot \Nabla \cdot \left(c_h \psi \mat{I}\right) 
    + \psi\left(\Nabla \cdot \left( c_h\vec{B}\right)\right)} = 0,\\
    &\frac{1}{2\beta_+}\entVar^T\,\noncon^{\supGLM} = 
    {\color{red}-\left(\Nabla\psi\right)\vec{v}^+\psi + \psi\left(\Nabla\psi\right)\vec{v}^+} = 0.
\end{align}
With the same argument as in the previous section one can compute the inverse of the bijective mapping $\state{u} \mapsto \state{w}$, i.e. the multi-ion GLM-MHD system is entropy-consistent.

\subsection{``Quasi-conservative'' Form of the Alternative Formulation of the Multi-Ion MHD System}

Now that we have an entropy-consistent multi-ion MHD system with incorporated GLM diverence cleaning, the goal is to write it in ``quasi-conservative'' form.
In other words, we want to encapsulate as many terms as possible into a divergence operator, such that the system resembles the well-known conservative formulation of the single-fluid MHD equations.
The ``quasi-conservative'' form will be useful for three main reasons:
\begin{enumerate}
    \item It will facilitate the implementation of numerical schemes that are momentum-conservative in the limit of vanishing magnetic field divergence and total-energy-conservative under more specific conditions.
    A detailed discussion about the achievable conservation properties for multi-ion MHD can be found below.
    \item It will facilitate the derivation of entropy-conservative and entropy-stable numerical schemes that are consistent with existing state-of-the-art discretization schemes for the single-fluid MHD equations, such as the one proposed by \citet{Derigs2018}.
    \item It will allow us to derive a generalization of the Godunov-Powell non-conservative term for plasmas with multiple ion species.
\end{enumerate}

In a similar fashion as in single-fluid MHD, the non-conservative term of the momentum equation of the alternative multi-ion MHD system \eqref{eq:multi-ion_mod} can be written in quasi-conservative form by rearranging the curl operator,
\begin{align}
\label{eq:TransformMomentumEq}
   \noncon^{\rm{mod}}_{\rho_k \vec{v}_k} 
   = - \frac{r_k \rho_k}{n_e e } \left( \Nabla \times \vec{B} \times \vec{B} - \Nabla p_e \right)
   =
   \frac{r_k \rho_k}{n_e e} \Nabla \cdot \left(\frac{1}{2} \norm{\vec{B}}^2 \mat{I} - \vec{B}\vec{B}^T 
    + p_e \mat{I}\right)
    +
    \frac{r_k \rho_k}{n_e e} \vec{B} 
    \left( \Nabla \cdot \vec{B} \right).
\end{align}

\q{c17r2}{
Similarly, the non-conservative term of the induction equation of  \eqref{eq:multi-ion_mod} can be written in conservation form,
\begin{align} \label{eq:TransformInductionEq}
   \noncon^{\rm{mod}}_{\vec{B}} 
   = - \Nabla \times (\vec{v}^+ \times \vec{B})
   =
   \Nabla \cdot \left(
   \vec{v}^+ \vec{B}^T - \vec{B} (\vec{v}^+)^T
   \right).
\end{align}
\q{c17r2}{
\change{}
}

The obtention of a quasi-conservative form for each species' energy term is more involved and requires us to define two new auxiliar variables: (i) each species' contribution to the charge-averaged ion velocity,
\begin{equation} \label{eq:v_plus_k}
    \vec{v}^+_k \coloneqq \frac{r_k \rho_k \vec{v}_k}{e n_e},
\end{equation}
and (ii) its complement,
\begin{equation} \label{eq:v_minus_k}
    \vec{v}^-_k \coloneqq \vec{v}^+ - \vec{v}^+_k = \sum_{s\ne k} \frac{r_s \rho_s \vec{v}_s}{e n_e}.
\end{equation} 

Using \eqref{eq:v_plus_k} and \eqref{eq:v_minus_k}, we can rewrite the non-conservative term of the energy equation as
\begin{align} \label{eq:TransformEnergyEq}
    \noncon^{\rm{mod}}_{E_k} =&  - \frac{r_k \rho_k \vec{v}_k}{n_e e}\cdot \left( \Nabla \times \vec{B} \times \vec{B} - \Nabla p_e \right) - \vec{B} \cdot \Nabla \times (\vec{v}^+ \times \vec{B}) 
    \nonumber \\
    =&  - \vec{v}^+_k \cdot \left( \Nabla \times \vec{B} \times \vec{B} - \Nabla p_e \right) 
    - \vec{B} \cdot \Nabla \times (\vec{v}^+_k \times \vec{B})
    - \vec{B} \cdot \Nabla \times (\vec{v}^-_k \times \vec{B}).
\end{align}

Equation \eqref{eq:TransformEnergyEq} can be further manipulated to obtain
\begin{align} \label{eq:TransformEnergyEq2}
    \noncon^{\rm{mod}}_{E_k} =& - \vec{v}^+_k \cdot \left( \Nabla \times \vec{B} \times \vec{B} - \Nabla p_e \right) 
    - \vec{B} \cdot \Nabla \times \left(\vec{v}^+_k \times \vec{B} \right)
    - \vec{B} \cdot \Nabla \times \left(\vec{v}^-_k \times \vec{B} \right)
    \\
    =&
    \Nabla \cdot \left( \vec{v}^+_k\,\|\vec{B}\|^2 - \vec{B}\left(\vec{v}^+_k\cdot\vec{B}\right) \right)
    + \vec{v}^+_k \cdot \Nabla p_e
    + \vec{B} \cdot 
    \Nabla \cdot \left(\vec{v}^-_k \vec{B}^T - \vec{B} (\vec{v}^-_k)^T  \right).
\end{align}

\change{
The quasi-conservative versions of the non-conservative terms for the momentum \eqref{eq:TransformMomentumEq} and total energy \eqref{eq:TransformEnergyEq2} of each ion species, and for the induction equation \eqref{eq:TransformInductionEq}, contain a single term that is proportional to $(\Nabla \cdot \vec{B})$.
It is possible to show that these terms are consistent with the single-fluid MHD non-conservative term proposed by
\citet{brackbill1980effect}.
However, they are not consistent with the non-conservative terms in the single-fluid GLM-MHD system of \citet{Derigs2018}, which uses the Godunov-Powell non-conservative term containing terms proportional to $(\nabla \cdot \vec{B})$ in both the induction and total energy equations.
To achieve consistency with \citet{Derigs2018}, we modify \eqref{eq:TransformInductionEq} and \eqref{eq:TransformEnergyEq2} to obtain
\begin{align} \label{eq:nonconB_quasi}
   \noncon^{\rm{quasi}}_{\vec{B}} 
   &= 
   \noncon^{\rm{mod}}_{\vec{B}} 
   +
   \vec{v}^{m}
   \left( \Nabla \cdot \vec{B} \right),
\\
\label{eq:nonconEk_quasi}
    \noncon^{\rm{quasi}}_{E_k}
    &=
    \noncon^{\rm{mod}}_{E_k}
    +
    \left(\vec{v}^m \cdot \vec{B} \right)
   \left( \Nabla \cdot \vec{B} \right).
\end{align}

These additional terms that are proportional to the magnetic field divergence are entropy-consistent and can be derived directly from the generalized Maxwell's equations, which do not assume $\Nabla \cdot \vec{B} = 0$, as shown by \citet{Derigs2018}.
The quantity $\vec{v}^{m}$ denotes the velocity, with which magnetic monopoles would move in space.
As long as $\vec{v}^{m}$ is chosen as some average of the ion velocities, it is possible to show consistency with the model of \citet{Derigs2018}.
In this work, we have made the assumption that magnetic monopoles move at the ion charge-averaged velocity,
$\vec{v}^m \coloneqq \vec{v}^+$.

We note that the addition of these terms symmetrizes the system in the limit of one ion species, as shown by \citet{godunov1972symmetric}.
The symmetric form of the equations reads
\begin{equation} \label{eq:symmform}
    \underline{\bigpartialderiv{\state{u}}{\entVar}}
    \bigpartialderiv{\entVar}{t}
    +
    \mat{A}^{x_j}
    \underline{\bigpartialderiv{\state{u}}{\entVar}}
    \bigpartialderiv{\entVar}{x_j} = \state{0},
\end{equation}
where we make use of Einstein's notation with the index $j$ and $\mat{A}^{x_j}$ is the so-called flux (and non-conservative term's) Jacobian in the direction $x_j$. 
Equation \eqref{eq:symmform} is known as the symmetric form of the system since the so-called entropy Jacobian $\underline{\partial \state{u} / \partial \entVar}$ is a symmetric positive-definite (SPD) matrix and the products $\underline{A}^{x_j} \underline{\partial \state{u} / \partial \entVar}$ are symmetric matrices.

In the case of an arbitrary number of ion species,  $\underline{\partial \state{u} / \partial \entVar}$ is still SPD (see Section \ref{sec:ES_flux} for a proof in the discrete setting).
However, we have found that the multi-ion MHD and multi-ion GLM-MHD systems are unfortunately not symmetrizable as the matrices $\underline{A}^{x_j} \underline{\partial \state{u} / \partial \entVar}$ are not symmetric, regardless of the choice of $\vec{v}^m$.
Interestingly, the addition of the GLM technique does not affect the symmetrization of the system. In fact, all entries of the rows and columns that correspond to the GLM method are symmetric.

It is worth noting that we observed that the choice of $\vec{v}^m = \vec{v}^+$ in \eqref{eq:nonconB_quasi} and \eqref{eq:nonconEk_quasi} results in more entries of the matrices $\underline{A}^{x_j} \underline{\partial \state{u} / \partial \entVar}$ becoming symmetric compared to other choices we investigated. We conclude that further modifications to the continuous multi-ion MHD model are necessary to achieve full symmetrization while still ensuring consistency with the single-fluid MHD model.

}
}

Gathering everything, we rewrite the multi-ion GLM-MHD system \eqref{eq:multi-ion_mod} in quasi-conservative form as
\begin{align} \label{eq:multi-ion_mod_div}
    \frac{\partial}{\partial t}
    \begin{pmatrix}
    \rho_k \\ \rho_k \vec{v}_k  \\ E_{k} \\ \vec{B} \\
    \red{\psi}
    \end{pmatrix}
    &+ \Nabla \cdot
    \underbrace{
    \begin{pmatrix} 
    \rho_k \vec{v}_k \\
    \rho_k \vec{v}_k\, \vec{v}_k^{\,T} + 
     p_k \threeMatrix{I}   \\
    \vec{v}_k\left(\hydroEner_k + p_k \right)  \\
    \threeMatrix{0} \\
    \vec{0}
    \end{pmatrix}
    }_{\blocktensor{f}^{\supEuler}}
    +
    \Nabla \cdot 
    \underbrace{
    \begin{pmatrix} 
    \vec{0}  \\
    \threeMatrix{0} \\
    \vec{v}^+_k\,\|\vec{B}\|^2 - \vec{B}\left(\vec{v}^+_k\cdot\vec{B}\right)\\
    \vec{v}^+ \vec{B}^T - \vec{B} (\vec{v}^+)^T \\
    \vec{0}
    \end{pmatrix} 
    }_{\blocktensor{f}^{\supMHD}}
    +
    \red{
    \Nabla \cdot
    \underbrace{
    \begin{pmatrix} 
    \vec{0} \\
    \threeMatrix{0} \\
    c_h \psi \vec{B} \\
    c_h \psi \threeMatrix{I} \\ c_h \vec{B}\\
    \end{pmatrix}
    }_{\blocktensor{f}^{\supGLM}}
    }
    +
    \underbrace{
    \begin{pmatrix}
    0 \\
    r_k \rho_k (\vec{v}^+ - \vec{v}_k) \times \vec{B} \\
    r_k \rho_k \vec{v}_k \cdot (\vec{v}^+ - \vec{v}_k) \times \vec{B}  \\
    \vec{0} \\
    0
    \end{pmatrix}
    }_{\state{g}}
    \nonumber \\
    &+
    \underbrace{
    \begin{pmatrix}
    0 \\
    \frac{r_k \rho_k}{n_e e} \vec{B} \\
    \vec{v}^+ \cdot \vec{B} \\
    \vec{v}^+ \\
    0
    \end{pmatrix}
    (\Nabla \cdot \vec{B})
    }_{\noncon^{\Powell} \coloneqq \stateG{\phi}^{\Powell} (\Nabla \cdot \vec{B}) }
    +
    \begin{pmatrix}
    0 \\
    \frac{r_k \rho_k}{n_e e} \Nabla \cdot \left(\frac{1}{2} \norm{\vec{B}}^2 \mat{I} - \vec{B}\vec{B}^T 
    + p_e \mat{I}\right) \\
    \vec{v}^+_k \cdot \Nabla p_e
    + \vec{B} \cdot 
    \Nabla \cdot \left(\vec{v}^-_k \vec{B}^T - \vec{B} (\vec{v}^-_k)^T  \right) \\
    \vec{0} \\
    0
    \end{pmatrix}
    +
    \red{
    \underbrace{
    \begin{pmatrix}
        \vec{0} \\
        \threeMatrix{0}\\
        \vec{v}^+ \psi \\
        \threeMatrix{0} \\
        \vec{v}^+
    \end{pmatrix}
    \cdot
    \Nabla \psi
    }_{\noncon^{\supGLM}}
    }
    =
    \state{0},
\end{align}
where $\noncon^{\Powell}$ is the multi-ion version of the Godunov-Powell non-conservative term, which is evaluated with the charge-averaged ion velocity and whose momentum components are scaled with the ratio of the charge of each ion species to the total ion charge.

Note that \eqref{eq:multi-ion_mod_div} reduces to the quasi-conservative form of the single-fluid MHD system when one neglects the electron pressure and only one ion species is considered.
We will derive the discretization of the multi-ion MHD system using the quasi-conservative form, such that our discretization is consistent with the quasi-conservative single-fluid discretization \cite{Derigs2018} when $N_i=1$.

\begin{remark}
The \textbf{total} momentum is conserved with \underline{vanishing magnetic field divergence}.
This result can be obtained by summing up all the momentum equations of the individual ion species and taking into account that the definition of the total electron charge \eqref{eq:electronNumberDensity2} implies
\begin{align*}
    \frac{1}{n_e e} \sum_{k=1}^{N_i} \rho_k r_k = 1.
\end{align*}

We obtain the following conservation law for the total momentum:
\begin{align} \label{eq:total_momentum_conservation}
    \bigpartialderiv{(\rho \vec{v})}{t} := \sum_k \bigpartialderiv{(\rho_k \vec{v}_k)}{t}
    =&
    - \Nabla \cdot \blocktensor{f}_{\rho \vec{v}}
    - \left[ \sum_k r_k \rho_k (\vec{v}^+ - \vec{v}_k) \times \vec{B} \right]
    -  (\Nabla \cdot \vec{B}) \vec{B} 
    \nonumber\\
    =&
    - \Nabla \cdot \blocktensor{f}_{\rho \vec{v}}
    - \left[ \vec{v}^+ \sum_k r_k \rho_k - \sum_k r_k \rho_k \vec{v}_k  \right] \times \vec{B}
    -  (\Nabla \cdot \vec{B}) \vec{B} 
    \nonumber\\
    \text{(def. of $v^+$ \eqref{eq:chargeAveragedVel})} \qquad
    =&
    - \Nabla \cdot \blocktensor{f}_{\rho \vec{v}}
    - \left[\left( \sum_k r_k \rho_k \vec{v}_k\right) \frac{1}{n_e e} \sum_k r_k \rho_k - \sum_k r_k \rho_k \vec{v}_k  \right] \times \vec{B}
    -  (\Nabla \cdot \vec{B}) \vec{B} 
    \nonumber\\
    \text{(def. of $n_e e$ \eqref{eq:electronNumberDensity2})} \qquad
    =&
    - \Nabla \cdot \blocktensor{f}_{\rho \vec{v}}
    - \underbrace{
    \left[ \sum_k r_k \rho_k \vec{v}_k - \sum_k r_k \rho_k \vec{v}_k  \right]}_{=0} \times \vec{B}
    - (\Nabla \cdot \vec{B}) \vec{B},
\end{align}
where the total momentum flux is
\begin{equation}
    \blocktensor{f}_{\rho \vec{v}} = 
    \left(\sum_{k=1}^{N_i} \rho_k \vec{v}_k\, \vec{v}_k^{\,T} + 
     p_k \threeMatrix{I}
     \right)
     +
     \frac{1}{2} \norm{\vec{B}}^2 \mat{I} - \vec{B}\vec{B}^T 
    + p_e \mat{I}.
\end{equation}
\end{remark}

\q{c19r2}{

\begin{remark} \label{remark:energy}
The total energy is conserved in the multi-ion MHD system with \underline{vanishing magnetic field divergence} when the \underline{electron pressure \change{gradient} is negligible}.
This result can be obtained by summing up all the energy equations of the individual ion species and accounting for the additional magnetic energy and GLM terms:
\begin{align*}
    \bigpartialderiv{E}{t} = \sum_{k=1}^{N_i} \bigpartialderiv{E_k}{t} - (N_i - 1)
    \left[
    \bigpartialderiv{E_{\rm{mag}}}{t}
    +
    \red{\bigpartialderiv{E^{\supGLM}}{t}}
    \right].
\end{align*}

We obtain the following equation for the evolution of the total energy:
\begin{align} \label{eq:totalEnergy_glm}
    \bigpartialderiv{E}{t} 
    &+ \Nabla \cdot 
    \underbrace{
    \left[ \left( \sum_{k=1}^{N_i} \vec{v}_k (\hydroEner_k + p_k) \right)
    + \vec{v}^+ \norm{B}^2 - \vec{B}(\vec{v}^+\cdot\vec{B})
    + \red{c_h \psi \vec{B}}
    \right]
    }_{\vec{f}_{E}}
    + {\vec{v}^+} \cdot \Nabla p_e
    \nonumber\\
    &+ \left(\Nabla \cdot \vec{B}\right) \left(\vec{v}^+ \cdot \vec{B}\right)
    +
    \red{\psi \vec{v}^+ \cdot \Nabla \psi }
    + \underbrace{\sum_{k=1}^{N_i} r_k \rho_k \vec{v}_k \cdot (\vec{v}^+ - \vec{v}_k) \times \vec{B}}_{= (e n_e) \vec{v}^+ \cdot \left(\vec{v}^+ \times \vec{B}\right) - \sum_{k=1}^{N_i} r_k \rho_k \vec{v}_k \cdot \left(\vec{v}_k \times \vec{B}\right)=0}
    = 0,
\end{align}
\change{where we have used the definition of the charge-averaged velocity \eqref{eq:chargeAveragedVel2}. 
The last term is equal to zero as it contains two scalar products of vectors with vectors obtained using cross products of themselves.

Equation \eqref{eq:totalEnergy_glm} shows that in the case of vanishing magnetic field divergence, $(\Nabla \cdot \vec{B}) = {\color{red}\psi} = 0$, the total energy fulfills a conservation law if the gradient of the electron pressure is negligible.
A non-vanishing gradient of the electron pressure destroys the total energy conservation because the electron energy is not self-consistently described in this particular multi-ion MHD model.
We note that this is also the case for the famous single-fluid MHD model, which typically assumes $\Nabla p_e = 0$.
}
\end{remark}
}

\subsection{One-dimensional Version of the Multi-Ion GLM-MHD System} \label{sec:multi_ion_1D}

To simplify the derivation of numerical discretization schemes for the alternative formulation of the multi-ion MHD system, we write a one-dimensional version of it,
\begin{equation} \label{eq:multi_ion-MHD_1D}
\bigpartialderiv {\mathbf{u}}{t}
+ \bigpartialderiv {\state{f}} {x}
+ \state{g}
+ \underbrace{\stateG{\phi}^{\Powell} \bigpartialderiv{B_1}{x}}_{\noncon^{\Powell}}
+ \underbrace{\stateG{\phi}^{\mathrm{Lor}} \circ \bigpartialderiv{\state{h}^{\rm{Lor}}}{x}}_{\noncon^{\rm{Lor}}}
+ \underbrace{\vec{B} \cdot \bigpartialderiv{\blocktensor{h}^{\rm{multi}}}{x}}_{\noncon^{\rm{multi}}}
+
{\color{red}
\underbrace{
\stateG{\phi}^{\rm{GLM}}\bigpartialderiv{\psi}{x}
}_{\noncon^{\supGLM}}
}
= \state{0},
\end{equation}
where the state variable is $\state{u} = (\rho_k, \rho_k \vec{v}_k, E_k, \vec{B})^T$, as before, and $\circ$ denotes the Hadamard (element-wise) vector multiplication operator.
The advective flux in $x$ is
\begin{equation}
\state{f}(\mathbf{u}) :=
\state{f}^{\supEuler} +\state{f}^{\supMHD} + {\color{red}\state{f}^{\supGLM}}=
\begin{pmatrix} 
\rho_k v_{k,1} \\[0.15cm]
\rho_k v_{k,1}^2 + p_k \\[0.15cm]
\rho_k v_{k,1} v_{k,2} \\[0.15cm]
\rho_k v_{k,1} v_{k,3} \\[0.15cm]
v_{k,1} \left( \frac{1}{2} \rho_k \|\vec{v}_k\|^2 + \frac{\gamma_k p_k}{\gamma_k -1}\right) \\[0.15cm]
0 \\[0.15cm]
0 \\[0.15cm]
0 \\[0.15cm]
0 \\[0.15cm]
\end{pmatrix} +
\begin{pmatrix} 
0 \\[0.15cm]
0 \\[0.15cm]
0 \\[0.15cm]
0 \\[0.15cm]
 v^+_{k,1} \|\vec{B}\|^2 - B_1 \left(\vec{v}^+_k\cdot\vec{B}\right) \\[0.15cm]
0 \\[0.15cm]
v^+_1 B_2 - v^+_2 B_1 \\[0.15cm]
v^+_1 B_3 - v^+_3 B_1 \\[0.15cm]
0 \\[0.15cm]
\end{pmatrix}
+
\color{red}{
c_h
\begin{pmatrix} 
0 \\[0.15cm]
0 \\[0.15cm]
0 \\[0.15cm]
0 \\[0.15cm]    
\psi {B}_1 \\[0.15cm]
\psi  \\[0.15cm]
0 \\[0.15cm]
0 \\[0.15cm]
{B}_1\\[0.15cm]
\end{pmatrix}
}.
\label{eq:advective_fluxes_1D}
\end{equation}

The one-dimensional version of the Godunov-Powell non-conservative term reads
\begin{equation}\label{eq:noncon_Powell}
    \noncon^{\Powell} = 
    \left(\stateG{\phi}^{\Powell} \right)  \bigpartialderiv{B_1}{x}
    =
    \begin{pmatrix}
    0 \\ 
    \frac{r_k \rho_k}{n_e e} B_1 \\[0.15cm]
    \frac{r_k \rho_k}{n_e e} B_2 \\[0.15cm]
    \frac{r_k \rho_k}{n_e e} B_3 \\[0.15cm]
    \vec{v}^+ \cdot \vec{B} \\[0.15cm]
    v^+_{1} \\[0.15cm]
    v^+_{2} \\[0.15cm] 
    v^+_{3} \\[0.15cm]
    0
    \end{pmatrix}
    \bigpartialderiv{B_1}{x}.
\end{equation}

The second non-conservative term is denoted $\noncon^{\rm{Lor}}$ as it contains some of the terms from the Lorentz force for the momentum and the energy equation of each ion species.
The one-dimensional version of this term reads
\begin{equation}\label{eq:noncon_Lor}
\noncon^{\rm{Lor}} = 
\stateG{\phi}^{\mathrm{Lor}} \circ \bigpartialderiv{\state{h}^{\rm{Lor}}}{x} =
\begin{pmatrix} 
0 \\[0.15cm]
\frac{r_k \rho_k}{n_e e} \\[0.15cm]
\frac{r_k \rho_k}{n_e e} \\[0.15cm]
\frac{r_k \rho_k}{n_e e} \\[0.15cm]
v^+_{k,1} \\[0.15cm]
\vec{0}\\[0.15cm]
0
\end{pmatrix}
\circ
\frac{\partial}{\partial x}
\begin{pmatrix} 
0 \\[0.15cm]
\frac{1}{2} \|\vec{B}\|^2 - B_1 B_1 + p_e\\[0.15cm]
- B_1 B_2 \\[0.15cm]
- B_1 B_3 \\[0.15cm]
p_e  \\[0.15cm]
\vec{0}\\[0.15cm]
0 \\
\end{pmatrix}.
\end{equation}

The third non-conservative term $\noncon^{\rm{multi}}$ is denoted as such because it contains terms that are only nonzero in the presence of more than one ion species, $N_i > 1$.
The one-dimensional version of this term reads as
\begin{align}\label{eq:noncon_multi}
    \noncon^{\rm{multi}} =
    \vec{B} \cdot \bigpartialderiv{\blocktensor{h}^{\rm{multi}}}{x} &=
    \vec{B} \cdot \bigpartialderiv{}{x}
    \begin{pmatrix}
    \begin{pmatrix}
    0 \\ \vec{0} \\ 0 \\ \vec{0} \\0
    \end{pmatrix},
    \begin{pmatrix}
    0 \\ \vec{0} \\ v^-_{k,1} B_2 - v^-_{k,2} B_1  \\ \vec{0} \\0
    \end{pmatrix},
    \begin{pmatrix}
    0 \\ \vec{0} \\ v^-_{k,1} B_3 - v^-_{k,3} B_1  \\ \vec{0} \\
    0
    \end{pmatrix}
    \end{pmatrix}
    \nonumber\\
    &=
    B_2 \bigpartialderiv{}{x}
    \begin{pmatrix}
    0 \\ \vec{0} \\ v^-_{k,1} B_2 - v^-_{k,2} B_1  \\ \vec{0} \\0
    \end{pmatrix}
    + B_3 \bigpartialderiv{}{x}
    \begin{pmatrix}
    0 \\ \vec{0} \\ v^-_{k,1} B_3 - v^-_{k,3} B_1  \\ \vec{0}\\
    0
    \end{pmatrix}.
\end{align}

Finally, the one-dimensional non-conservative GLM term reads:
\begin{align}
{\color{red}
    \noncon^{\supGLM}
    =
    \stateG{\phi}^{\rm{GLM}}\bigpartialderiv{\psi}{x}
    =
    \begin{pmatrix}
        0 \\
        \vec{0}\\
         {v}^+_1\psi \\
        \vec{0} \\        
        {v}^+_1\\
    \end{pmatrix}    
    \bigpartialderiv{\psi}{x}.
}
\end{align}

\section{Entropy-Stable Numerical Schemes} \label{sec:discretization}

In this section, we develop both low- and high-order discretizations that ensure entropy conservation and stability for the one-dimensional multi-ion GLM-MHD system presented in Section \ref{sec:multi_ion_1D}. 
The decision to focus on the one-dimensional version stems from the system's inherent complexity, involving numerous state variables, conservative and non-conservative terms. 
This choice enhances the paper's readability, allowing for a clearer exposition of the methodology.
While we specifically address the one-dimensional case, \change{we provide the extension of the main derivations to 2D Cartesian meshes in Appendix \ref{app:2d}.
In the \nameref{sec:Conclusions} Section, we provide some hints on the extension to curvilinear meshes of quadrilaterals/hexahedra and triangles/tetrahedra}.
In Section \ref{sec:results}, we present a numerical validation of the methods of this section when extended to two spatial dimensions.

Our approach to constructing entropy-consistent discretizations begins with the derivation of suitable numerical fluxes for a low-order entropy-conservative Finite Volume (FV) discretization, detailed in Section \ref{sec:fvec}. 
Subsequently, we integrate these derived entropy-conservative (EC) fluxes into a high-order split-form framework, specifically the Legendre–Gauss–Lobatto Discontinuous Galerkin Spectral Element Method (LGL-DGSEM), which is outlined in Section \ref{sec:dges}. 
The result is a high-order scheme that not only upholds the conservation properties for the PDE's state quantities but is also provably entropy conservative. 
To enhance its dissipative and upwinding characteristics, the scheme is complemented with provably entropy-dissipative surface numerical fluxes.

\subsection{Low-Order Entropy-Conservative Finite Volume Discretization} \label{sec:fvec}

In the context of a one-dimensional tessellation of a computational domain with finite volume cells indexed as $i = 1, \ldots, N_{\rm{DOFs}}$, we aim to derive a semi-discrete finite volume discretization of \eqref{eq:multi_ion-MHD_1D} in the form
\begin{equation} \label{eq:fv_1D}
    \Delta x \dot{\state{u}}_i =
    \numfluxb{f}_{(i,i-1)} - \numfluxb{f}_{(i,i+1)}
    +
    \numnonconsD{\Jan}_{(i,i-1)} - \numnonconsD{\Jan}_{(i,i+1)},
\end{equation}
where $\numfluxb{f}_{(i,j)} \coloneqq \numfluxb{f}(\state{u}_i,\state{u}_j)$ is a two-point numerical flux function that approximates the solution to the Riemann problem between degrees of freedom $i$ and $j$. 
This flux function is symmetric (conservative) and consistent, i.e., it satisfies $\numfluxb{f}_{(i,j)} = \numfluxb{f}_{(j,i)}$ and $\numfluxb{f}_{(i,i)} = \state{f}(\state{u}_i)$. 
\q{c20r2}{
Furthermore, $\numnonconsD{\Jan}_{(i,j)} = \numnonconsD{\Jan}(\state{u}_i,\state{u}_j)$ is a generally non-symmetric (non-conservative) but consistent \change{numerical} two-point term. That is,
\[
\numnonconsD{\Jan}_{(i,i)} = \Jan (\state{u}_i),
\]
where $\Jan$ represents a ``non-derivative'' version of the non-conservative terms.
\change{
We adopt the notation of \cite{Derigs2018, Bohm2018, rueda2023entropy} and use the diamond symbol ``$\diamond$'' to indicate that this is a discrete term.}
}

\change{For the one-dimensional multi-ion GLM-MHD system \eqref{eq:multi_ion-MHD_1D}, 
the numerical non-conservative two-point term contains Godunov-Powell, Lorentz, multi-ion, and GLM components,}
\begin{equation}\label{eq:noncons_different_terms}
    {\Jan}^{\diamond}_{(i,j)} = 
    {\Jan}^{\Powell\diamond}_{(i,j)} +
    {\Jan}^{\rm{Lor}\diamond}_{(i,j)} +
    {\Jan}^{\rm{multi}\diamond}_{(i,j)} +
    {\color{red}{\Jan}^{\rm{GLM}\diamond}_{(i,j)}},
\end{equation}
\change{and} the ``non-derivative'' version of the non-conservative terms reads
\begin{equation} \label{eq:nonder_noncons}
    \Jan = 
    \underbrace{
    \stateG{\phi}^{\Powell}B_1 
    }_{\Jan^{\Powell}}
    + \underbrace{
    \stateG{\phi}^{\mathrm{Lor}} \circ \state{h}^{\rm{Lor}}
    }_{\Jan^{\mathrm{Lor}}}
    + 
    \underbrace{
    \vec{B} \cdot \blocktensor{h}^{\rm{multi}}
    }_{\Jan^{\rm{multi}}}
    + {\color{red}
    \underbrace{
        \stateG{\phi}^{\rm{GLM}}\psi
    }_{\Jan^{\mathrm{GLM}}}
    }
\end{equation}
and, in addition, we introduce
\[
\Jan^{\mathrm{\supMHD}} := \Jan^{\Powell} + \Jan^{\mathrm{Lor}} + \Jan^{\rm{multi}}.
\]

The discretization \eqref{eq:fv_1D} is considered entropy-conservative if, when contracted with the entropy variables, it simplifies to
\begin{equation}
    \Delta x \entVar_i^T \dot{\state{u}}_i
    =
    \Delta x \dot{S}
    =
    \numflux{f}^S_{(i,i-1)} - \numflux{f}^S_{(i,i+1)},
\end{equation}
where $\numflux{f}^S_{(i,j)} \coloneqq \numflux{f}^S(\state{u}_i, \state{u}_j)$ represents symmetric and consistent two-point numerical entropy fluxes. Specifically, $\numflux{f}^S_{(i,j)} = \numflux{f}^S_{(j,i)}$ and $\numflux{f}^S_{(i,i)} = f^S(\state{u}_i)$.

We aim to determine numerical two-point fluxes $\numfluxb{f}_{(i,j)}^{\ec}$ and non-conservative terms ${\Jan}^{\diamond, \ec}_{(j,i)}$ such that the entropy production between two degrees of freedom is zero. This corresponds to finding fluxes and terms satisfying a generalization of Tadmor's shuffle condition \cite{tadmor1983entropy,tadmor1986minimum,Tadmor2003}, as discussed in \cite{Rueda-Ramirez2020, Manzanero2020},
\begin{equation} \label{eq:entropy_prod_ec}
    r_{(i,j)} =
    r (\state{u}_i,\state{u}_j) = 
    \jump{\entVar}_{(i,j)}^T 
    \numfluxb{f}_{(i,j)}^{\ec}
    + \entVar^T_{j} {\Jan}^{\diamond, \ec}_{(j,i)}
    - \entVar^T_{i} {\Jan}^{\diamond, \ec}_{(i,j)}
    - \jump{\Psi}_{(i,j)} = 0,
\end{equation}
where $r_{(i,j)}$ is the entropy production between degrees of freedom $i$ and $j$, and $\Psi$ is the so-called entropy (flux) potential.

The entropy potential of our non-conservative system can be defined in an analogous manner as in single-fluid MHD \cite{Derigs2018, Bohm2018, Rueda-Ramirez2020},
\begin{equation}\label{eq:entropy_potential}
\Psi = \entVar^T \left(\state{f} + \Jan \right) - {f}^S 
=
\underbrace{\entVar^T \state{f}^{\supEuler} - {f}^S}_{\coloneqq \Psi^{\supEuler}}
+
\underbrace{\entVar^T \left(\state{f}^{\supMHD} 
+ \Jan^{\supMHD} \right)}_{\coloneqq \Psi^{\supMHD}}
+ {\color{red}\underbrace{\entVar^T\left(
\state{f}^{\supGLM} + \Jan^{\supGLM}
\right)
}_{\coloneqq \Psi^{\supGLM}}},
\end{equation}
which leads to the numerical entropy flux function
\begin{equation} \label{eq:numEntFlux} 
\numflux{f}^S_{(j,k)} = 
\avg{\entVar}_{(j,k)}^T \numfluxb{f}^{\ec}_{(j,k)} 
+ \frac{1}{2} \entVar^T_j \Jan^{\diamond,\ec}_{(j,k)}
+ \frac{1}{2} \entVar^T_{k} \Jan^{\diamond, \ec}_{(k,j)}
- \avg{\Psi}_{(j,k)}.
\end{equation}

Entropy-conserving two-point FV discretizations of the form \eqref{eq:fv_1D} exhibit second-order accuracy in regular grids and first-order accuracy on irregular grids \cite{Fjordholm2012, Winters2016, Derigs2018}.

Since \eqref{eq:entropy_prod_ec} is a scalar equation, there are several possible solutions for the fluxes and non-conservative terms that satisfy entropy conservation.
While the demand for consistency in the fluxes and non-conservative terms significantly narrows down the possibilities, a substantial number of potential solutions remains. To further refine our approach, we introduce an additional constraint: in the presence of a single ion species, the entropy-conservative (EC) fluxes and non-conservative terms must reduce to the fluxes and non-conservative terms presented by \citet{Derigs2018} for the single-fluid MHD equations.

Through meticulous manipulation of the terms in \eqref{eq:entropy_prod_ec}, we derive entropy-conserving fluxes and non-conservative terms for the multi-ion GLM-MHD system, ensuring consistency with the results of \citet{Derigs2018}. 
Our approach involves breaking down the entropy-conserving flux into Euler, MHD and GLM components, 
\begin{equation}
    \numfluxb{f}^{\ec}_{(i,j)} = \numfluxb{f}^{\ec,\supEuler}_{(i,j)}
    + \numfluxb{f}^{\ec, \supMHD}_{(i,j)}
    + {\color{red}  \numfluxb{f}^{\ec, \supGLM}_{(i,j)}},
\end{equation}
and the entropy-conserving two-point non-conservative terms into Godunov-Powell, Lorentz, multi-ion, and GLM components, \change{as in \eqref{eq:noncons_different_terms}}.
We then analyze each part individually, ensuring compliance with the consistency requirements. For a detailed and comprehensive derivation, please refer to Appendix \ref{app:ec_derivation}.

\q{c2_r1}{
The EC flux, which is composed of Euler, MHD and GLM parts, reads as
\begin{equation}\label{eq:ECFlux}
\numfluxb{f}^{\ec}(\state{u}_L,\state{u}_R) =
\begin{pmatrix} 
\rho^{\ln}_{k} \avg{v_{k,1}} \\
\rho^{\ln}_{k} \avg{v_{k,1}}\avg{v_{k,1}} + \overline{p}_k \\ 
\rho^{\ln}_{k} \avg{v_{k,1}} \avg{v_{k,2}} \\
\rho^{\ln}_{k} \avg{v_{k,1}} \avg{v_{k,3}}  \\
f^{\ec, \supEuler}_{E_k} + \blue{\numflux{f}^{\ec,\supMHD}_{E_k}} + \red{2 c_h \avg{\psi} \avg{B_1} - c_h \avg{\psi B_1}}\\
\red{c_h \avg{\psi}} \\
    \blue{\avg{v^+_1}\avg{B_2} - \avg{v^+_2}\avg{B_1}}\\
    \blue{\avg{v^+_1}\avg{B_3} - \avg{v^+_3}\avg{B_1}}\\
    \red{c_h \avg{B_1} }
\end{pmatrix}
\end{equation}
where the mean pressure is defined as
\begin{align} \label{eq:pbar}
\overline{p}_k =& \frac{\avg{\rho_k}}{2\avg{\beta_k}},
\end{align}
and the Euler and MHD energy terms read as
\begin{align}
f^{\ec, \supEuler}_{E_k} = & f_{\rho_k}^{\ec,\supEuler}\bigg[\frac{1}{2 (\gamma_k-1) \beta^{\ln}_{k}} - \frac{1}{2} \left(\avg{v_{k,1}^2} + \avg{v_{k,2}^2} + \avg{v_{k,3}^2}\right) \bigg] \\
&+ f_{\rho_k v_{k,1}}^{\ec,\supEuler} \avg{v_{k,1}} 
+ f_{\rho_k v_{k,2}}^{\ec,\supEuler} \avg{v_{k,2}} 
+ f_{\rho_k v_{k,3}}^{\ec,\supEuler} \avg{v_{k,3}},
\label{eq:EulerEnergyEC}\\
\blue{\numflux{f}^{\ec,\supMHD}_{E_k,(i,j)} =}&
    \blue{\avg{\vec{B}} \cdot \vec{\numflux{f}}^{\ec,\supMHD}_{\vec{B},(i,j)}
    - \frac{1}{2} 
    \avg{v^+_{k,1}  \| \vec{B} \|^2}
    +
    \avg{\vec{v}^+_k  \cdot \vec{B}}_{(i,j)} \avg{B_1}_{(i,j)}
    + \frac{1}{2} 
    \avg{v^+_{k,1} } \avg{\| \vec{B} \|^2}}
    \\
    & 
    \blue{-\avg{\vec{v}^+_k } \cdot  \avg{\vec{B}} \avg{B_1}
    -
    \avg{\vec{B}} \cdot \underbrace{\left( \avg{\vec{B}}
    \avg{v^-_{k,1}} -
    \avg{\vec{v}^-_{k}} \avg{B_1} \right)}_{\vec{h}^{\rm{multi}*}_{E_k,(i,j)}}},
\end{align}
\change{
where $\vec{\numflux{f}}^{\ec,\supMHD}_{\vec{B},(i,j)}$ corresponds to the magnetic field components of the flux given in \eqref{eq:ECFlux}, i.e.,
\begin{equation}
\vec{\numflux{f}}^{\ec,\supMHD}_{\vec{B},(i,j)}
= 
\begin{pmatrix}
\red{c_h \avg{\psi}}, &
    \blue{\avg{v^+_1}\avg{B_2} - \avg{v^+_2}\avg{B_1}}, &
    \blue{\avg{v^+_1}\avg{B_3} - \avg{v^+_3}\avg{B_1}}
\end{pmatrix}^T.
\end{equation}
}}

The black terms come from the Euler discretization, the blue terms come from the MHD discretization, and the red terms come from the GLM discretization.
The four non-conservative two-point terms that provide entropy conservation are the Godunov-Powell term,
\begin{equation}
\label{eq:fv_PhiGP}
    {
    \Jan^{\Powell\diamond, \ec}_{(i,j)} =
    \stateG{\phi}^{\Powell}_i \avg{B_1}_{(i,j)}},
\end{equation}
the Lorentz non-conservative term
\begin{equation}
\label{eq:fv_PhiLor}
{
    \Jan^{\rm{Lor}\diamond, \ec}_{(i,j)} =
    \begin{pmatrix} 
0 \\[0.1cm]
\frac{r_k \rho_k}{n_e e} \\[0.1cm]
\frac{r_k \rho_k}{n_e e} \\[0.1cm]
\frac{r_k \rho_k}{n_e e} \\[0.1cm]
v^+_{k,1} \\[0.1cm]
\vec{0} \\[0.1cm]
0
\end{pmatrix}_i
\circ
\underbrace{
\begin{pmatrix} 
0 \\[0.1cm]
\frac{1}{2} 
\avg{\|\vec{B}\|^2}
- \avg{B_1}^2 + \bar{p}_e\\[0.1cm]
- \avg{B_1} \avg{B_2} \\[0.1cm]
- \avg{B_1} \avg{B_3} \\[0.1cm]
\bar{p}_e \\[0.1cm]
\vec{0} \\[0.1cm]
0 
\end{pmatrix}_{(i,j)}
}_{\state{h}^{\rm{Lor}\diamond, \ec}_{(i,j)}}
},
\end{equation}
with an arbitrary symmetric ``average'' of the electron pressure $\bar{p}_e$, \change{for instance, $\bar{p}_e = \avg{p_e}$ is a valid choice but any other symmetric average can be used},
the ``multi-ion'' term,
\begin{align}
    \Jan^{\rm{multi}\diamond, \ec}_{(i,j)}
    =&
    (B_2)_i
    \begin{pmatrix}
    0 \\ \vec{0} \\ \avg{v^-_{k,1}} \avg{B_2} - \avg{v^-_{k,2}} \avg{B_1}  \\ \vec{0} \\ 0
    \end{pmatrix}
    + (B_3)_i
    \begin{pmatrix}
    0 \\ \vec{0} \\ \avg{v^-_{k,1}} \avg{B_3} - \avg{v^-_{k,3}} \avg{B_1}  \\ \vec{0}\\ 0
    \end{pmatrix}
    \nonumber\\
    =&
    \begin{pmatrix}
    0 \\ \vec{0} \\ 
    \vec{B}_i \cdot \avg{\vec{B}}
    \avg{v^-_{k,1}} -
    \vec{B}_i \cdot
    \avg{\vec{v}^-_{k}} \avg{B_1}  \\ \vec{0}\\ 0
    \end{pmatrix}
    =
        \begin{pmatrix}
        0 \\ \vec{0} \\ 
        \vec{B}_i \cdot \vec{h}^{\rm{multi}*}_{E_k,(i,j)}  \\ \vec{0}\\ 0
        \end{pmatrix},
\end{align}
and the GLM term,
\begin{align} \label{eq:ECGLM}
\red{
        \Jan^{\rm{GLM}\diamond, \ec}_{(i,j)} =
    \begin{pmatrix} 
0 \\
\vec{0} \\
v^+_{1} \psi \\
\vec{0} \\
v^+_{1}
\end{pmatrix}_i
\avg{\psi}_{(i,j)}.
    }
\end{align}

\q{c19r2_2}{
\change{

\begin{remark}\label{remark:momentum_disc}
    The \textbf{total} momentum is conserved with vanishing magnetic field divergence by the finite volume scheme. As in the continuous case, this result can be obtained by summing up all the momentum components for the individual ion species of the equations, as represented in the semi-discrete finite-volume discretization \eqref{eq:fv_1D}.
The condition $\Nabla \cdot \vec{B} = 0$ implies that the Godunov-Powell term \eqref{eq:fv_PhiGP} cancels itself out ($B_1 = \,$constant in 1D), leaving the Lorentz non-conservative term \eqref{eq:fv_PhiLor} as the only non-conservative term for the momentum equations.
Moreover, considering that the definition of the total electron charge \eqref{eq:electronNumberDensity2} implies
\begin{align*}
    \frac{1}{n_e e} \sum_{k=1}^{N_i} \rho_k r_k = 1,
\end{align*}
it is possible to show that the sum of the momentum components of the Lorentz non-conservative terms reduces to a flux form, which is symmetric and consistent.
For the total momentum in any direction $m \in \{1, 2, 3\}$, we have:
\begin{equation}
    \sum_{k=1}^{N_i} \left(\Jan^{\rm{Lor}\diamond, \ec}_{(i,j)}\right)_{\rho_k v_{k,m}} = 
    \frac{1}{n_e e} \sum_{k=1}^{N_i} \rho_k r_k 
    \left( \state{h}^{\rm{Lor}\diamond, \ec}_{(i,j)}\right)_{\rho_k v_{k,m}}
    =
    \left( \state{h}^{\rm{Lor}\diamond, \ec}_{(i,j)}\right)_{\rho_s v_{s,m}} ~~~ \forall s \in \{1, \ldots, N_i\},
\end{equation}
because the momentum components of the symmetric part of the Lorentz non-conservative term, $\left(\state{h}^{\rm{Lor}\diamond, \ec}_{(i,j)}\right)_{\rho_s v_{s,m}}$, are all equal.

\end{remark}

\begin{remark}\label{remark:energy_disc}
    The total energy is conserved by the finite volume scheme under two specific conditions: (i) with vanishing magnetic field divergence and vanishing electron pressure gradient, $\Nabla \cdot \vec{B} = \Nabla p_e = 0$, in the case of one ion species, or (ii) with vanishing magnetic field and vanishing electron pressure gradient, $B_i = \Nabla p_e = 0$ for $i \in \{1, 2, 3\}$, in the case of multiple ion species.

    Condition (i) holds because our FV discretization is consistent with the FV discretization of the single-fluid GLM-MHD system by \citet{Derigs2018}, which is total energy conservative under those conditions. 
    
    Condition (ii) is necessary in the multi-species setting because of the nonlinear (quadratic) dependence of the total energy on the magnetic field $\vec{B}$ and divergence-cleaning field \red{$\psi$},
\begin{equation}\label{eq:total_energy_glm}
    E = 
    \sum_{k=1}^{N_i} E_k - \frac{1}{2} (N_i - 1) \left(\norm{\vec{B}}^2 + \psi^2 \right).
\end{equation}
    The chain rule, which was used to show total energy conservation at the continuous level (Remark \ref{remark:energy}), does not apply at the discrete level. Consequently, it is not possible to write a conservative semi-discrete FV expression for the total energy or to show that the total energy remains constant for arbitrary time-stepping schemes, unless the last term of \eqref{eq:total_energy_glm} vanishes.

    It is important to note that discrete total energy conservation is also a challenge for the discretizations of the multi-ion MHD system described in Section \ref{sec:tothsystem} (without our algebraic manipulation). 
    In that system, the total energy is also not linearly dependent on the state quantities, which complicates achieving energy conservation.

\end{remark}
}
}

\subsection{High-Order Entropy-Stable Discontinuous Galerkin Discretization}\label{sec:dges}

To achieve an entropy-stable LGL-DGSEM discretization of \eqref{eq:multi_ion-MHD_1D}, the simulation domain is partitioned into elements, and all variables are approximated within each element using piece-wise Lagrange interpolating polynomials of degree $N$ on Legendre-Gauss-Lobatto (LGL) nodes, $\ell_k (\xi), \, k=0, \ldots, N$. 
These polynomials are continuous within each element and discontinuous at the element interfaces.

Additionally, \eqref{eq:multi_ion-MHD_1D} is multiplied by an arbitrary polynomial (test function) of degree $N$ and integrated by parts within each element of the mesh. The resulting integrals are numerically evaluated using an LGL quadrature rule with $N+1$ points on a reference element, $\xi \in [-1,1]$, and the volume integrals are replaced by the so-called split-form formulation, yielding the expression \cite{rueda2023entropy, rueda2024flux}
\begin{align} \label{eq:DGSEM}
J \omega_j \dot{\state{u}}_j 
+ &
\underbrace{
\sum_{k=0}^N S_{jk} \left( \state{f}^{*}_{(j,k)} + \numnonconsS{\Jan}_{(j,k)} \right)
}_{\mathrm{Volume \, term}}
- 
\underbrace{
\delta_{j0} \left( \numfluxb{f}_{(0,L)} + \numnonconsD{\Jan}_{(0,L)} \right)
+ \delta_{jN} \left( \numfluxb{f}_{(N,R)} + \numnonconsD{\Jan}_{(N,R)} \right)
}_{\mathrm{Surface \,  term}}
= \state{0},
\end{align}
for each degree of freedom $j$ of each element. 

In \eqref{eq:DGSEM}, $\omega_j$ is the reference-space quadrature weight, $J$ is the geometry mapping Jacobian from reference space to physical space, which is constant within each element in the 1D discretization, $S_{jk}$ are the entries of a skew-symmetric matrix obtained as $\mat{S} = 2 \mat{Q} - \mat{B}$, where $\mat{Q} \in \mathbb{R}^{N+1,N+1}$ is the SBP derivative matrix with entries
$$
Q_{jk}\coloneqq\omega_j D_{jk}=\omega_j \ell'_k(\xi_j), \,\,\,\,\,\, j,k=0, \ldots, N,
$$ 
which are defined in terms of the LGL Lagrange interpolating polynomials,  $\mat{B} := diag(-1, 0,  \ldots, 0, 1) \in \mathbb{R}^{N+1,N+1}$ is the so-called boundary matrix, and $\delta_{ij}$ denotes Kronecker's delta function with node indexes $i$ and $j$.

As in the low-order FV discretization, $\numfluxb{f}_{(i,j)}$ is a symmetric (conservative) and consistent two-point numerical flux function and $\numnonconsD{\Jan}_{(i,j)}$ is a generally non-symmetric but consistent two-point numerical term.
These two terms are now applied at the surface of the DG elements.
The sub-indices $(0,L)$ and $(N,R)$ indicate that the numerical fluxes and non-conservative terms are computed between a boundary node and an outer state (left or right).

Similarly, in the volume term we use the so-called volume numerical flux ${\state{f}}^{*}_{(j,k)}$, a two-point flux function evaluated between nodes $j$ and $k$ that needs to be consistent with the continuous flux and symmetric in its two arguments, and the so-called volume numerical non-conservative term $\numnonconsS{\Jan}_{(j,k)}$, a non-symmetric two-point term evaluated between nodes $j$ and $k$ that is consistent with $\Jan$ \eqref{eq:nonder_noncons}. 

The specific choice of numerical volume fluxes enables the generation of versatile split formulations for the nonlinear PDE terms, aiding in de-aliasing and potentially facilitating provable entropy stability, as demonstrated in works such as \cite{Fisher2013,Fisher2013a,Carpenter2014,Gassner2013,Gassner2016,Renac2019}. 
Notably, it can be shown that opting for ``standard'' averages for the volume numerical fluxes and non-conservative terms, defined as
\begin{equation} \label{eq:std_vol_fluxes}
{\state{f}}^{*}_{(j,k)} = \avg{\state{f}}_{(j,k)},
\quad
\Jan^{\star}_{(j,k)} = \stateG{\phi}_j^{\Powell} \avg{B_1}_{(j,k)}
+
    \stateG{\phi}_j^{\rm{Lor}} \circ \avg{\state{h}^{\rm{Lor}}}_{(j,k)}
    +
    \vec{B}_j \cdot \avg{\blocktensor{h}^{\rm{multi}}}_{(j,k)}
    +
    {\color{red}
        \stateG{\phi}_j^{\rm{GLM}}\avg{\psi}_{(j,k)}
    },
\end{equation}
results in the standard DGSEM discretization of the equations.

It has been demonstrated in various studies, e.g., \cite{rueda2023entropy, Rueda-Ramirez2020, Manzanero2020}, that contracting the LGL-DGSEM discretization expressed in \eqref{eq:DGSEM} with the entropy variables of node $j$ results in a semi-discrete entropy conservation law (when summing over the entire element) if the volume and surface numerical fluxes and non-conservative terms satisfy the generalized Tadmor shuffle condition \eqref{eq:entropy_prod_ec}.
The total entropy production in an element with this choice of numerical fluxes and non-conservative terms reads
\begin{align*}
    \sum_{j=0}^N J \omega_j \entVar_j^T \dot{\state{u}}_j 
&= - \sum_{j=0}^N \entVar_j^T
\left(
\sum_{k=0}^N S_{jk} \left( \state{f}^{*\ec}_{(j,k)} + \Jan^{\star \ec}_{(j,k)} \right)
-
\delta_{j0} \left( \numfluxb{f}^{\ec}_{(0,L)} + {\Jan}^{\diamond, \ec}_{(0,L)} \right)
+
 \delta_{jN} \left( \numfluxb{f}^{\ec}_{(N,R)} + {\Jan}^{\diamond, \ec}_{(N,R)} \right)
 \right)
 \\
&=
\numflux{f}^S_{(0, L)} - \numflux{f}^S_{(N,R)},
\end{align*}
where the numerical entropy fluxes are defined at the element surfaces are defined as in the low-order setting \eqref{eq:numEntFlux}.
The proof relies on the fact that the LGL-DGSEM fulfills the summation by parts property, $\mat{Q} + \mat{Q}^T = {\mat{B}}$.

\subsubsection{Towards an Entropy-Stable Formulation}\label{sec:ES_flux}

A standard approach \cite{Derigs2017, Derigs2018} to obtain entropy-stable LGL-DGSEM discretizations of non-conservative systems consists in using entropy conservative numerical fluxes and non-conservative terms in the volume integral terms, and entropy-stable numerical fluxes and non-conservative terms in the surface integral.
It has been demonstrated in various studies, e.g., \cite{Bohm2018, rueda2023entropy, Rueda-Ramirez2020, Manzanero2020}, that this approach leads to overall entropy dissipation.

The approach of \citet{Derigs2017} to obtain a suitable entropy-stable discretization rewrites the surface numerical fluxes and non-conservative terms as
\begin{align}\label{eq:esflux}
    \numfluxb{f}_{(L,R)} &= \numfluxb{f}_{(L,R)}^{\ec} - \frac{1}{2} \lambda^{\max}_{(L,R)} \matcal{H} \jump{\entVar}_{(L,R)},
    \nonumber\\
    \numnonconsD{\Jan}_{(L,R)} &= \Jan^{\diamond,\ec}_{(L,R)},
\end{align}
where $\lambda^{\max}_{(L,R)}$ is an estimation of the maximum wave speed between $L$ and $R$ and $\matcal{H}$ is a symmetric positive-definite (SPD) matrix.
The additional dissipation term in the surface numerical flux function can be easily shown to be entropy dissipative for any value of $\lambda^{\max}$ if the matrix $\matcal{H}$ is SPD.
Moreover, the dissipation term is consistent with the local Lax-Friedrichs dissipation if the matrix $\matcal{H}$ is consistent with the so-called entropy Jacobian $\underline{\partial \state{u} / \partial \entVar}$.

We follow the procedure outlined by \citet{Derigs2017} with the aim to find a dissipation matrix that is consistent with the local Lax-Friedrichs (Rusanov) dissipation, i.e.,
\begin{equation}\label{eq:consistencyLLF}
    \matcal{H} \jump{\entVar}_{(L,R)} = \jump{\state{u}}_{(L,R)}.
\end{equation}
To do that, we first rewrite the jump of state and entropy variables using the identity
\begin{equation}
    \jump{a\,b} = \jump{a}\avg{b} + \avg{a}\jump{b}.
\end{equation}

\q{c6_r1}{
For the jump of state variables, we obtain
\begin{align*}
\jump{\state{u}} &= 
	\jump{\begin{pmatrix}\rho_k \\ \rho_k v_{k,1} \\ \rho_k v_{k,2} \\ \rho_k v_{k,3} \\ E_k \\ B_1 \\ B_2 \\ B_3 \\ \red{\psi} \end{pmatrix}}
 =
 \change{
 \jump{\begin{pmatrix}\rho_k \\ \rho_k v_{k,1} \\ \rho_k v_{k,2} \\ \rho_k v_{k,3} \\ 
  \frac{p_k}{\gamma_k -1} + \frac{1}{2}\rho_k \left\|\vec{v}_k\right\|^2 + \frac12 \|\vec{B}\|^2 + {\color{red}\frac12 \psi^2}
 \\ B_1 \\ B_2 \\ B_3 \\ \red{\psi} \end{pmatrix}}
 }
    \\
	&=
	\begin{pmatrix}
	\jump{\rho_k} \\ \avg{\rho_k} \jump{v_{k,1}} + \avg{v_{k,1}} \jump{\rho_k} \\ 
    \avg{\rho_k} \jump{v_{k,2}} + \avg{v_{k,2}} \jump{\rho_k}\\ \avg{\rho_k} \jump{v_{k,3}} + \avg{v_{k,3}} \jump{\rho_k} \\ 
    \left(\frac{\avg{\beta_k^{-1}}}{2(\gamma_k-1)} 
    + \frac{1}{2}{\avg{|\vec{v}_k\|^2}}\right)\jump{\rho_k}
    + \avg{\rho_k} \change{\jumpR{\vec{v}_k}^2} 
    - \frac{\avg{\rho_k}}{2\betaavg_k(\gamma_k-1)}\jump{\beta_k} + \sum\limits_{i=1}^{3}\avg{B_i}\jump{B_i} + \red{\avg{\psi} \jump{\psi}} \\ 
    \jump{B_1} \\ \jump{B_2} \\ \jump{B_3} \\
    \red{\jump{\psi}}
	\end{pmatrix},
\end{align*}
with the newly defined \change{quantities}
\begin{equation*}
    \betaavg = 2 \avg{\beta}^2 - \avg{\beta^2}, 
    ~~~~
    \change{\jumpR{\vec{v}_k}^2 = \left(\avg{v_{k,1}}\jump{v_{k,1}} + \avg{v_{k,2}}\jump{v_{k,2}} + \avg{v_{k,3}}\jump{v_{k,3}}\right) }.
\end{equation*}

For the jump of entropy variables, we obtain
\begin{align*}
\jump{\entVar} &=
	\jump{\begin{pmatrix}\frac{\gamma_k - s_k}{\gamma_k - 1}-\beta_k \lVert\vec{v}_k\rVert^2\\2\beta_k  v_{k,1}\\2\beta_k  v_{k,2}\\2\beta_k  v_{k,3}\\-2\beta_k \\2\beta_+ B_1\\2\beta_+ B_2\\2\beta_+ B_3 \\ \red{2\beta_+ \psi}\end{pmatrix}}
	=
	\begin{pmatrix}
	\frac{\jump{\rho_k}}{\rholn}
    +\frac{\jump{\beta_k}}{\betaln(\gamma_k-1)}-\Big(\avg{\|\vec{v}_k\|^2}\Big)\jump{\beta_k}
    -2\avg{\beta_k} \change{\jumpR{\vec{v}_k}^2} 
    \\
	2 \avg{\beta_k}\jump{v_{k,1}} + 2 \avg{v_{k,1}}\jump{\beta_k} \\
	2 \avg{\beta_k}\jump{v_{k,2}} + 2 \avg{v_{k,2}}\jump{\beta_k} \\
	2 \avg{\beta_k}\jump{v_{k,3}} + 2 \avg{v_{k,3}}\jump{\beta_k} \\
	-2 \jump{\beta_k} \\
	2 \avg{\beta_+}\jump{B_1} + 2 \avg{B_1}\jump{\beta_+} \\
	2 \avg{\beta_+}\jump{B_2} + 2 \avg{B_2}\jump{\beta_+} \\
	2 \avg{\beta_+}\jump{B_3} + 2 \avg{B_3}\jump{\beta_+} \\
    \red{2 \avg{\beta_+}\jump{\psi} + 2 \avg{\psi}\jump{\beta_+} }\\
	\end{pmatrix}.
\end{align*}
}

Given the similarities of these quantities with the jumps of state and entropy variables of single-fluid MHD, the derivation of the dissipation matrix follows the steps proposed by \citet{Derigs2017} for the single-fluid MHD case.
The two main differences are: (i) in the multi-ion case, the hydrodynamic quantities generate diagonal blocks in the entropy Jacobian, and (ii) in multi-ion MHD, there are two terms that are inversely proportional to the temperature, $\beta_k$ and $\beta_+ \coloneqq \sum \beta_k$, instead of one.
This latter difference leads to off-diagonal entries in the entropy Jacobian.

As in \cite{Derigs2017}, we are unable to find a symmetric matrix $\matcal{H}$ that fulfills \eqref{eq:consistencyLLF}.
As suggested in \cite{Derigs2017}, this hints that a dissipation operator of the form $\lambda^{\max} \jump{\state{u}}$ is not entropic.
As a remedy, we follow the procedure by \citet{Derigs2017} and modify the consistency condition for the energy equations and approximate the total energy jump of each ion species as
\begin{align*}
    \jump{E_k} \simeq \overline{\jump{E_k}} =&
    \left(\frac{1}{2(\gamma_k-1)\betaln} + \frac{1}{2}\uavg\right)\jump{\rho_k} - \frac{\rholn}{2(\gamma_k-1)}\frac{\jump{\beta_k}}{(\betaln)^2} \\
    &+ \avg{\rho_k}\left(\avg{v_{k,1}}\jump{v_{k,1}} + \avg{v_{k,2}}\jump{v_{k,2}} + \avg{v_{k,3}}\jump{v_{k,3}}\right) + \sum\limits_{i=1}^{3} \avg{B_i}\jump{B_i} + \red{\avg{\psi} \jump{\psi}},
\end{align*}
with
\begin{equation*}
    \uavg = 2 \norm{\avg{\vec{v}_k}}^2 - \avg{\norm{\vec{v}_k}^2}.
\end{equation*}
Note that $\overline{\jump{E_k}}$ behaves asymptotically as $\jump{E_k}$  when $\avg{\beta_k^{-1}} \to \frac{1}{\betaln}$, $\rholn\to \avg{\rho_k}$ and $\betaln \to \betaavg $, i.e. when the jumps in density and $\beta$ approach zero.

The goal is now to obtain an SPD matrix that fulfills
\begin{equation}\label{eq:consistencyLLFrelaxed}
    \hat{\matcal{H}} \jump{\entVar}_{(L,R)} 
    =
    \begin{pmatrix} \jump{\rho_k} \\ \jump{\rho_k v_{k,1}} \\ 
    \jump{\rho_k v_{k,2}} \\ 
    \jump{\rho_k v_{k,3}}\\ 
    \overline{\jump{E_k}} \\ 
    \jump{B_1} \\ 
    \jump{B_2} \\ 
    \jump{B_3} \\ 
    \red{\jump{\psi}} \end{pmatrix}
    \approx \jump{\state{u}}_{(L,R)}.
\end{equation}

We employ identical procedures as detailed in \cite{Derigs2017}, with careful substitution of $\beta_+$ in place of $\beta_k$ where necessary.
In particular, the new variable $\beta_+$ appears multiplying jumps and averages of the components of the magnetic field and the divergence cleaning operator.
Using the identities
\begin{align}
    \jump{\beta_+} = \sum_{k=1}^{N_i} \jump{\beta_k},
    ~~~~
    \avg{\beta_+} = \sum_{k=1}^{N_i} \avg{\beta_k},
\end{align}
which follow from the linearity of the jump and average operators, we derive the subsequent dissipation matrix for the multi-ion GLM-MHD system:
\begin{equation}\label{eq:dissipationMatrix}
    \hat{\matcal{H}} = \begin{pmatrix}
        \matcal{A} & \matcal{B}\\
        \matcal{B}^T & \matcal{C}
    \end{pmatrix},
    \text{ where }
\,\,
    \matcal{A}
    =
    \begin{pmatrix}
        \mat{A}_1 & \mat{A}_{\text{off}} & \mat{A}_{\text{off}} &  \cdots & \mat{A}_{\text{off}} \\
         \mat{A}_{\text{off}} &\mat{A}_2 &  \mat{A}_{\text{off}} &  \cdots & \mat{A}_{\text{off}} \\
         \mat{A}_{\text{off}} &  \mat{A}_{\text{off}} &\mat{A}_3  &   & \vdots \\
          \vdots & \vdots & & \ddots  & \mat{A}_{\text{off}} \\
          \mat{A}_{\text{off}} & \mat{A}_{\text{off}}  &  \cdots & \mat{A}_{\text{off}} &\mat{A}_{N_i} \\
    \end{pmatrix},
    \,\,
    \matcal{B} =
    \begin{pmatrix}
        \mat{B}_{\text{off}} \\ \mat{B}_{\text{off}} \\ \vdots \\ \mat{B}_{\text{off}}  
    \end{pmatrix}, \,\,\text{ and } \,\,
    \matcal{C} = \text{diag}(\tau^+, \tau^+, \tau^+, \tau^+).
\end{equation}
The left upper block $\matcal{A} \in \mathbb{R}^{5N_i \times 5N_i}$ consists of the diagonal blocks:
\begin{equation*}
    \mat{A}_k = 
    \begin{pmatrix}
        \rholn & \rholn\avg{v_{k,1}} & \rholn\avg{v_{k,2}} & \rholn\avg{v_{k,3}} & \Eline \\
\rholn\avg{v_{k,1}} & \rholn\avg{v_{k,1}}^2 + \pavg & \rholn\avg{v_{k,1}}\avg{v_{k,2}} & \rholn\avg{v_{k,1}}\avg{v_{k,3}} & \left(\Eline + \pavg \right) \avg{v_{k,1}}\\
\rholn\avg{v_{k,2}} & \rholn\avg{v_{k,2}}\avg{v_{k,1}} & \rholn\avg{v_{k,2}}^2 + \pavg & \rholn\avg{v_{k,2}}\avg{v_{k,3}} & \left(\Eline + \pavg \right) \avg{v_{k,2}} \\
\rholn\avg{v_{k,3}} & \rholn\avg{v_{k,3}}\avg{v_{k,1}} & \rholn\avg{v_{k,3}}\avg{v_{k,2}} & \rholn\avg{v_{k,3}}^2 + \pavg & \left(\Eline + \pavg \right) \avg{v_{k,3}} \\
\Eline & \left(\Eline + \pavg \right) \avg{v_{k,1}} & \left(\Eline + \pavg \right) \avg{v_{k,2}} & \left(\Eline + \pavg \right) \avg{v_{k,3}} & \hat{\matcal{H}}_{k,(5,5)} &
    \end{pmatrix}
\end{equation*}
where $k\in \{1, \ldots, N_i \}$ and the energy component reads
\begin{equation}\label{eq:H_k_EnergyTerm}
    \hat{\matcal{H}}_{k,(5,5)} = 
    \frac{1}{\rho_k^{\ln} }\left(\frac{(p_k^{*})^2}{\gamma_k - 1} + \overline{E}_k^2 \right) 
              + \overline{p}_k \norm{\avg{\vec{v_k}}}^2
              + \Emagbar,
\end{equation}
the pressure mean $\overline{p}_k$ is defined as in the EC flux, see equation \eqref{eq:pbar}, and the new auxiliary quantities are defined as
\begin{align} \label{eq:Hmat_AuxVars}
    p_k^* := \frac{\rho_k^{\ln}}{2 \beta_k^{\ln}},
    ~~~~
    \tau^+ := \frac{1}{2\avg{\beta_+}},
    ~~~~
    \overline{E}_k := \frac{p_k^*}{\gamma_k - 1} + \frac{1}{2}\rho_k^{\ln}\uavg, 
    \quad
    \Emagbar := \tau^+ \left(\norm{\avg{\vec{B}}}^2 + \red{\avg{\psi}^2}\right).
\end{align}

The off-diagonal blocks $\mat{A}^{\text{off}} \in \mathbb{R}^{5\times 5}$ of $\matcal{A}$ contain a single non-zero element $\mat{A}^{\text{off}}_{5,5}  = \Emagbar$. Finally, the off-diagonal blocks $\matcal{B}$ of $\hat{\matcal{H}}$  comprised of blocks $\mat{B}_{\text{off}} \in \mathbb{R}^{5\times \red{4}}$ that are zero except for the last row:
\[
\mat{B}_{\text{off}}  = \begin{pmatrix}
    0 & \cdots & & 0\\
    \vdots &  & & \vdots\\
    0 &  & & 0\\
    \tau^+ \avg{B_1} & \tau^+ \avg{B_2} & \tau^+ \avg{B_3} & \red{\tau^+ \avg{\psi}}
\end{pmatrix}
\]

\begin{figure}
    \centering
    \begin{tabular}{rl}
        $\hat{\matcal{H}} = \begin{pmatrix}
        \matcal{A} & \matcal{B}\\
        \matcal{B}^T & \matcal{C}
    \end{pmatrix} = $ & 
    \raisebox{-.5\height}{
    \includegraphics[trim=100 0 0 0,clip,width=0.4\textwidth]{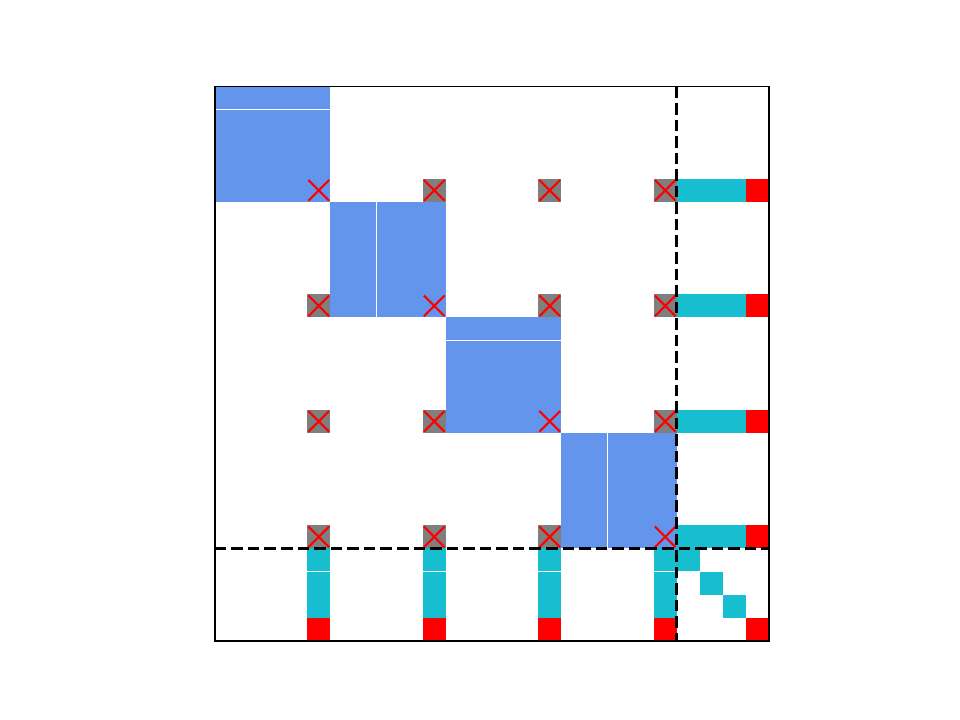}
    }
    \end{tabular}
    \caption{Sparsity pattern of the matrix $\hat{\matcal{H}}$ for the case of four ion species, $N_i=4$.}
    \label{fig:sparsity}
\end{figure}

The sparsity pattern of the matrix $\hat{\matcal{H}}$  is displayed in Figure \ref{fig:sparsity} in the case of four ion species ($N_i=4$). The blue diagonal blocks contain the hydrodynamic quantities of each ion species $\mat{A}_k$.
The gray off-diagonal entries contain $\Emagbar$, i.e., the single entry of $\mat{A}_{\text{off}}$, which indicates how the energy equation for each ion species depends on the entropy variables associated with the energy of all other ion species.
The entries in cyan are associated with the entropy variables pertinent to the magnetic field (non-zero rows of $\mat{B}_{\text{off}}$) or the induction equations (diagonal of $\matcal{C}$).
Lastly, the red entries are dedicated to terms that encompass GLM-terms only, whereas the entries marked with a red cross include terms depending on GLM along with other terms.
The dashed lines indicate the borders between the different blocks $\matcal{A}$, $\matcal{B}$, $\matcal{B}^T$, and $\matcal{C}$.

The matrix $\hat{\matcal{H}}$ is consistent with the dissipation matrix of single-fluid MHD \cite{Derigs2017,Derigs2018} in the limit of one ion species $N_i=1$, and it produces a dissipation operator that is asymptotically consistent with the LLF dissipation operator, as shown in the \nameref{sec:results} section. 
The following lemma establishes the symmetric positive definiteness (SPD) of $\hat{\matcal{H}}$ for an arbitrary number of species $N_i$. Our numerical experiments have consistently demonstrated entropy dissipation when utilizing it, as is detailed in the \nameref{sec:results}, Section \ref{sec:results}.

\begin{lemma}\label{lem:dissipation-is-spd} 
    The matrix $\hat{\matcal{H}}$ defined in~\eqref{eq:dissipationMatrix} is symmetric positive definite (SPD).
\end{lemma}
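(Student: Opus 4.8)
The plan is to establish the two defining properties separately. Symmetry is immediate from the block construction in \eqref{eq:dissipationMatrix}: each diagonal block $\mat{A}_k$ is symmetric by inspection, each off-diagonal block $\mat{A}_{\text{off}}$ is symmetric (it carries only a single $(5,5)$ entry, equal to $\Emagbar$), the block $\matcal{C}=\tau^+\mat{I}_4$ is diagonal, and the lower-left block is by definition the transpose $\matcal{B}^T$ of the upper-right block. Hence $\hat{\matcal{H}}=\hat{\matcal{H}}^T$, and only positive definiteness remains.

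For positive definiteness the key idea is to exploit the rank-one manner in which the magnetic field, the GLM variable, and the energy components couple, and to eliminate the $\matcal{C}$ block by a Schur complement. Since all densities and pressures are positive we have $\beta_k>0$, so $\avg{\beta_+}=\sum_k\avg{\beta_k}>0$ and $\tau^+=1/(2\avg{\beta_+})>0$; thus $\matcal{C}=\tau^+\mat{I}_4$ is SPD, and $\hat{\matcal{H}}$ is SPD if and only if its Schur complement $\matcal{S}:=\matcal{A}-\matcal{B}\,\matcal{C}^{-1}\,\matcal{B}^T$ is SPD.

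I would then compute $\matcal{S}$ explicitly. Write $\mathbf{c}:=(\avg{B_1},\avg{B_2},\avg{B_3},\avg{\psi})^T\in\mathbb{R}^4$, so that $\Emagbar=\tau^+\norm{\mathbf{c}}^2$. Each block $\mat{B}_{\text{off}}$ of $\matcal{B}$ has only its fifth row nonzero, equal to $\tau^+\mathbf{c}^T$, and every block of $\matcal{B}$ is the same $\mat{B}_{\text{off}}$. Because $\matcal{C}^{-1}=(1/\tau^+)\mat{I}_4$, every one of the $N_i\times N_i$ blocks of $\matcal{B}\,\matcal{C}^{-1}\,\matcal{B}^T$ equals $\tfrac{1}{\tau^+}\mat{B}_{\text{off}}\mat{B}_{\text{off}}^T$, a $5\times5$ matrix whose sole nonzero entry is the $(5,5)$ entry $\tau^+\norm{\mathbf{c}}^2=\Emagbar$, i.e. exactly $\mat{A}_{\text{off}}$. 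Subtracting from $\matcal{A}$ therefore cancels every off-diagonal block and removes precisely the $\Emagbar$ contribution from the $(5,5)$ entry of each diagonal block, leaving the block-diagonal matrix $\matcal{S}=\mathrm{blockdiag}(\tilde{\mat{A}}_1,\ldots,\tilde{\mat{A}}_{N_i})$, where $\tilde{\mat{A}}_k=\mat{A}_k-\mat{A}_{\text{off}}$ is the purely hydrodynamic (Euler) dissipation matrix whose energy entry is $\tfrac{1}{\rholn}\big((p_k^*)^2/(\gamma_k-1)+\Eline^2\big)+\pavg\norm{\avg{\vec{v}_k}}^2$, with the magnetic/GLM term $\Emagbar$ now absent. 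Since a symmetric block-diagonal matrix is SPD if and only if each diagonal block is, it suffices to show that each $\tilde{\mat{A}}_k$ is SPD.

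The final step, which I expect to be the main obstacle, is establishing that each Euler block $\tilde{\mat{A}}_k$ is SPD. This matrix coincides exactly with the hydrodynamic dissipation matrix of the single-fluid entropy-stable scheme of \citet{Derigs2017,Derigs2018} in the one-species limit, whose SPD property is proved there. For a self-contained argument I would verify it directly via an $\mat{L}\mat{D}\mat{L}^T$ factorization (or Sylvester's criterion), using that $\rholn$, $\betaln$, $\pavg$, $p_k^*$, and $\Eline$ are all strictly positive — a consequence of the positivity of all individual densities and pressures assumed in Section \ref{sec:tothsystem} — so that the diagonal factor $\mat{D}$ has strictly positive entries. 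Combining, $\matcal{S}\succ0$ and therefore $\hat{\matcal{H}}\succ0$. Notably, the GLM variable requires no separate treatment, since it enters only through the last component of $\mathbf{c}$ and the enlarged dimension of $\matcal{C}$, both of which are absorbed automatically by the Schur-complement cancellation.
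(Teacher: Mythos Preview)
Your proposal is correct and follows essentially the same approach as the paper: symmetry by inspection, Schur complement with respect to the SPD block $\matcal{C}=\tau^+\mat{I}_4$, recognition that the complement collapses to the block-diagonal matrix $\mathrm{blockdiag}(\tilde{\mat{A}}_1,\ldots,\tilde{\mat{A}}_{N_i})$ of purely hydrodynamic Euler blocks, and finally SPD of each $\tilde{\mat{A}}_k$ via an explicit $\mat{L}\mat{D}\mat{L}^T$ factorization with strictly positive diagonal. Your use of the vector $\mathbf{c}=(\avg{B_1},\avg{B_2},\avg{B_3},\avg{\psi})^T$ to organize the Schur-complement cancellation is a slightly cleaner bookkeeping device than the paper's indexwise computation, but the argument is otherwise identical.
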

\begin{proof}

The matrix $\hat{\matcal{H}}$ is evidently symmetric. To show positive definiteness, we employ the Schur complement. Since the densities and pressures of all ion species are positive, i.e. $\rho_k, p_k >0 \, \forall k$, we have that $\beta_k := \rho_k / 2 p_k >0$, $\beta_+ := \sum_k \beta_k >0$ and $\tau^+ := 1 / 2 \avg{\beta_+} >0$. Thus, $\matcal{C}$ is invertible and by Schur's complement, cf.~\cite{boyd2004convex}, $\hat{\matcal{H}}$  is PD if and only if $\matcal{C}$ is PD and its complement $\hat{\matcal{H}} \setminus \matcal{C} = \matcal{A} - \matcal{B}\matcal{C}^{-1}\matcal{B}^T$
is PD. By assumption, the fourfold eigenvalue, $\tau^+$, of $\matcal{C}$ is strictly positive and, hence, $\matcal{C}$ is PD and invertible with $\matcal{C}^{-1} = \text{diag}(1/\tau^+,1/\tau^+,1/\tau^+,1/\tau^+)$. Thus,
\begin{align*}
    (\matcal{B}\matcal{C}^{-1}\matcal{B}^T)_{i,j} &= 
    \begin{cases}
         \frac{1}{\tau^+}\tau^+\left(\avg{\vec{B}}^T, \red{\avg{\psi}}\right)
         \cdot
         \tau^+\begin{pmatrix}
             \avg{\vec{B}}\\ \red{\avg{\psi}}
             \end{pmatrix}
             &\text{ if } i,j \!\!\!\! \mod 5 =0,\\
         0 & \text{otherwise}
    \end{cases}
    \\
    &=
     \begin{cases}
         \Emagbar
             &\text{ if } i,j \!\!\!\!\mod 5 =0,\\
         0 & \text{otherwise}
    \end{cases},
\end{align*}
where we use the definition of $\Emagbar$ in Eq.~\eqref{eq:Hmat_AuxVars}.
The non-zero positions of this matrix are exactly those marked with a red cross in Figure~\ref{fig:sparsity}. Therefore, the complement $\hat{\matcal{H}} \setminus \matcal{C} = \text{diag}(\widetilde{\mat{A}}_1, \ldots, \widetilde{\mat{A}}_{N_i}) $ is a block-diagonal matrix with
\[
\widetilde{\mat{A}}_{k, (i,j)} = \begin{cases}
    \mat{A}_{k, (i,j)} & \text{ if } (i,j) \neq (5,5),\\
    \hat{\matcal{H}}_{k,(5,5)} - \Emagbar & \text{ otherwise}.
\end{cases}
\]

We remark that the Schur complement matrix, $\hat{\matcal{H}} \setminus \matcal{C}$, is nothing but the dissipation matrix of the multi-species Euler system with a similar energy term.
To show its positive definiteness, we can decompose it as $\widetilde{\mat{L}} \, \widetilde{\mat{D}} \,\widetilde{\mat{L}}^T$, where $\widetilde{\mat{D}}$ is a diagonal matrix that only contains positive entries.
The $\widetilde{\mat{L}} \, \widetilde{\mat{D}} \,\widetilde{\mat{L}}^T$ decomposition of each diagonal block  $\widetilde{\mat{A}}_k$ reads:  
\begin{equation}\label{eq:ldlt}
\widetilde{\mat{L}}_k=
\left(\begin{array}{ccccc}
1 & 0 & 0 & 0 & 0 \\
\avg{v_{k,1}} & 1 & 0 & 0 & 0 \\
\avg{v_{k,2}} & 0 & 1 & 0 & 0 \\
\avg{v_{k,3}} & 0 & 0 & 1 & 0 \\
\frac{\overline{E}_k^{2}}{\rholn} & \avg{v_{k,1}} & \avg{v_{k,2}} & \avg{v_{k,3}} & 1
\end{array}\right),
\
\widetilde{\mat{D}}_k = 
\left(\begin{array}{ccccc}
\rholn & 0 & 0 & 0 & 0 \\
 0 & \overline{p}_k & 0 & 0 & 0 \\
 0 & 0 & \overline{p}_k & 0 & 0 \\
 0 & 0 & 0 & \overline{p}_k & 0 \\
 0 & 0 & 0 & 0 &  \widetilde{\mat{A}}_{k,(5,5)}\!\! - \overline{p}_k\norm{\avg{\vec{v_k}}}^2\!\! - \frac{\overline{E}_k^{2}}{\rholn}
\end{array}\right),
\end{equation}
where
\[
\widetilde{\mat{A}}_{k,(5,5)} - \overline{p}_k\norm{\avg{\vec{v_k}}}^2 - \frac{\overline{E}_k^{2}}{\rholn}
 = \frac{1}{\rho_k^{\ln} }\frac{(p_k^{*})^2}{\gamma_k - 1}  > 0
\]
by assumption. Therefore,  $\hat{\matcal{H}} \setminus \matcal{C}$ consists of diagonal blocks each having strictly positive eigenvalues, which implies that $\hat{\matcal{H}}$ is SPD.
\end{proof}

\begin{remark}
By Lemma~\ref{lem:dissipation-is-spd}  and Sylvester's criterion, it is clear that the matrix $\matcal{A}$ defined in \eqref{eq:dissipationMatrix}, i.e the left upper diagonal block of $\hat{\matcal{H}}$, is SPD. However, we would like to state an additional direct proof of this fact since some of techniques used therein seem noteworthy.
\end{remark}
\begin{proof}
      We make the same assumptions as in the proof of Lemma~\ref{lem:dissipation-is-spd}. In addition, let $\Emagbar\neq 0$, otherwise $\matcal{A}$ has a simple block-diagonal structure and it is easy to confirm that it is SPD.  The $\mat{L}\mat{D}\mat{L}^T$ decomposition of each diagonal block  $\mat{A}_k$ is the same as in \eqref{eq:ldlt}, except for the last eigenvalue that reads $\mat{D}_{k, (5,5)} = \hat{\mathcal{H}}_{k,(E_k,E_k)} - \overline{p}_k\norm{\avg{\vec{v_k}}}^2 - \frac{\overline{E}_k^{2}}{\rholn}$.

Let $\matcal{L} = \text{diag}(\mat{L}_1, \ldots, \mat{L}_{N_i})$ be a block diagonal matrix and consider the similarity transformation $\matcal{A}' := \matcal{L}^{-1}\matcal{A}\matcal{L}^{-T}$. The inverse of $\matcal{L}$ exists since its diagonal blocks $\mat{L}_k$ are invertible. The transformation of the off-diagonal blocks of $\matcal{A}$ is obtained as follows:
\begin{align*}
    \mat{L}^{-1}_k\cdot 
    \begin{pmatrix}
    0 & \cdots & 0\\
    \vdots & \ddots & 0\\
    0 & \cdots & \Emagbar
    \end{pmatrix}
    =
    \begin{pmatrix}
    0 & \cdots & 0 &\vdots\\ 
    \vdots & \ddots & \vdots & \state{c}\\
    0 & \cdots & 0 & \vdots
    \end{pmatrix}
    \,\,\Leftrightarrow \,\,
    \mat{L}_k \state{c} = 
    \begin{pmatrix}
        0 \\ \vdots \\ 0 \\ \Emagbar
    \end{pmatrix}
\end{align*}
where $\state{c} \in \mathbb{R}^5$. Forwards substitution yields  $\state{c} = \left(0,\ldots,0, \Emagbar \right)^T$. The same holds for the right-hand-side case due to symmetry and transposition rules of matrix-vector products.

Thus, the off-diagonal blocks $\mat{A}_{\text{off}}$ of $\matcal{A}$ are invariant under this similarity transformation and $\matcal{A}'$ has an almost diagonal structure: 
\[
\matcal{A}'[5k-4,\ldots, 5k ;\, 5k-4,\ldots, 5k] = \mat{D}_k, \quad\matcal{A}'_{(5i, 5j)} = \Emagbar\quad \text{for } i\neq j, \, i,j \in\{1,\ldots, N_i\}
\]
and $\matcal{A}'$ is zero everywhere else. Next, we show that $\matcal{A}'$ is SPD. Let 
$\state{x}\in\mathbb{R}^{5N_i}$, such that $\state{x} \neq \state{0}$, then:
\begin{align*}
    \state{x}^T \matcal{A}' \state{x} 
    = &\sum_{i=1}^{5N_i} x_i^2 \mat{D}_{i\text{ div } 5, (i\!\!\!\!\mod 5,\,i\!\!\!\!\mod 5)} + \Emagbar\sum_{k=1}^{N_i} x_{5j}\sum_{l=1, l\neq k}^{N_i}x_{5l} 
    \\
    = &\sum_{k=1}^{N_i} \sum_{m=1}^{4} x_{5(k-1)+m}^2 \mat{D}_{k, (m,m)}
    + \sum_{k=1}^{N_i} x_{5k}^2 (\mat{D}_{k, (5k,5k)} - \Emagbar) 
    + \sum_{k=1}^{N_i} \Emagbar x_{5k}^2  
    + \Emagbar\sum_{k=1}^{N_i} x_{5j}\sum_{l=1, l\neq k}^{N_i}x_{5l} 
    \\
    = &\sum_{k=1}^{N_i} \sum_{m=1}^{4} x_{5(k-1)+m}^2 \mat{D}_{k, (m,m)}
    + \sum_{k=1}^{N_i} x_{5k}^2 (\mat{D}_{k, (5k,5k)} - \Emagbar) 
    + \Emagbar \left(\sum_{k=1}^{N_i} x_{5j} \right)^2
\end{align*}
Since $\Emagbar >0$ and the eigenvalues of $\matcal{A}$ are non-negative, it is left to show that the second summand is non-negative for any $k\in\{1,\ldots,  N_i\}$:
\begin{equation*}
    \mat{D}_{k, (5k,5k)} - \Emagbar = \hat{\mathcal{H}}_{k,(5,5)}\!\! - \overline{p}_k\norm{\avg{\vec{v_k}}}^2\!\! - \frac{\overline{E}_k^{2}}{\rholn} - \Emagbar 
    \stackrel{\eqref{eq:H_k_EnergyTerm}}{=} 
    \frac{1}{\rho_k^{\ln} }\frac{(p_k^{*})^2}{\gamma_k - 1} > 0,
\end{equation*}
 Thus, $\matcal{A}'$ and its similar transform, $\matcal{A}$, are SPD.
\end{proof}

\subsubsection{Standard Rusanov Numerical Flux}

The flux derived in Section \ref{sec:ES_flux} requires the computation of the entropy Jacobian.
Another (simpler) possibility is to use a standard Rusanov (LLF) numerical flux function at the element interfaces,
\begin{align} \label{eq:llf}
{\numfluxb{f}}_{(L,R)} &= \avg{\state{f}}_{(L,R)} - \frac{1}{2} \lambda^{\max}_{(L,R)} \jump{\state{u}}_{(L,R)}, \\
\Jan^{\diamond}_{(L,R)} &= \stateG{\phi}_L^{\Powell} \avg{B_1}_{(L,R)}
+
    \stateG{\phi}_L^{\rm{Lor}} \circ \avg{\state{h}^{\rm{Lor}}}_{(L,R)}
    +
    \vec{B}_L \cdot \avg{\blocktensor{h}^{\rm{multi}}}_{(L,R)}
    +
    {\color{red}
        \stateG{\phi}_L^{\rm{GLM}}\avg{\psi}_{(L,R)}
    }.
\end{align}
While the standard LLF flux has demonstrated entropy stability for various conservative and non-conservative systems such as the Euler equations \cite{Tadmor2003}, shallow water magnetohydrodynamics equations \cite{duan2021high}, and relativistic hydrodynamics equations \cite{duan2019high}, among many others given an accurate estimation of $\lambda^{\max}$ from above, there is currently no formal proof establishing its entropy stability for the multi-ion GLM-MHD system to the authors' knowledge. 
It is noteworthy, however, that our numerical tests revealed entropy dissipation and robust performance of this flux function for all performed simulations, as elaborated in the \nameref{sec:results} section.

\paragraph{A Note on the Maximum Wave Speed:}

As pointed out by \citet{toth2021challenges}, the explicit formulas for the wave modes and wave speeds of the multi-ion MHD system are unfortunately not known, as the generalization to $N_i$ ion species is not trivial.
In this work, we use a similar approach to the one of the BATS-R-US code \cite{toth2021challenges, glocer2009multifluid}, and estimate the maximum wave speeds with a generalization of the single-fluid wave speeds.
In particular, for the dissipation operator of the numerical flux function we use
\begin{equation}
    \lambda^{\max}_{(L,R)} \approx \max \left[ \left( \max_{k=1, \ldots, N_i} \vec{v}_k \cdot \vec{n}_{(L,R)}\right)_L, \left( \max_{k=1, \ldots, N_i} \vec{v}_k \cdot \vec{n}_{(L,R)}\right)_R \right]
    +
    \max \left[ 
    c_f\left(\state{u}_L, \vec{n}_{(L,R)}\right), 
    c_f\left(\state{u}_R, \vec{n}_{(L,R)}\right) \right],
\end{equation}
where $\vec{n}_{(L,R)}$ is the normal vector at the interface $(L,R)$, and the multi-ion fast magneto-sonic speed is estimated with a generalization of the single-fluid speeds:
\begin{equation}
    c_f^2 \left(\state{u}, \vec{n}\right) = \max_{k=1, \ldots, N_i} 
    \frac{1}{2} \left(a^2 + \|\vec{b}\|^2  +
    \sqrt{(a^2 + \|\vec{b}\|^2)^2 - 4 a^2 (\vec{b}\cdot \Vec{n})^2} \right),
    ~~~
    a^2 = \gamma_k \frac{p_k}{\rho_k},
    ~~~
    \vec{b} = \frac{\vec{B}}{\rho}.
\end{equation}

Similarly, to compute the explicit time-step size, we compute a two-dimensional nodal maximum wave speed as
\begin{equation}
    \label{eq:lambdamax_nodal}
    \lambda^{\max} =  
    \max_{k=1, \ldots, N_i} v_{k,1} + c_f\left(\state{u}, \vec{n}_1\right)
    +
    \max_{k=1, \ldots, N_i} v_{k,2} + c_f\left(\state{u}, \vec{n}_2\right),
\end{equation}
where the unit vectors are defined as $\vec{n}_1=(1,0,0)$ and $\vec{n}_2=(0,1,0)$.

\q{c19r2_3}{
\change{
\begin{remark}
    The conservation properties for the state quantities, total momentum, and total energy of the discontinuous Galerkin discretizations derived in this section are the same as those for the finite volume scheme, as discussed in Remarks \ref{remark:momentum_disc} and \ref{remark:energy_disc}. This similarity arises because the split-form discontinuous Galerkin scheme can be expressed in a flux-differencing finite-volume-like form, using two-point numerical fluxes and non-conservative terms \cite{rueda2024flux}.
\end{remark}
}
}

\section{Numerical Results} \label{sec:results}

In this section, we test the numerical accuracy, entropy consistency, and robustness of our entropy-stable LGL-DGSEM discretization of the multi-ion GLM-MHD equations.
For simplicity, we will focus on two-dimensional test cases on Cartesian meshes, but the methods of this paper can be extended to three-dimensional problems on curvilinear meshes, as shown in \cite{Bohm2018, Rueda-Ramirez2021, rueda2023entropy}.

\q{c25_r2}{
In all cases, the time integration was performed with the explicit fourth-order five-stages Runge-Kutta (RK4-5) scheme of \citet{carpenter1994fourth}.
The time-step size is {estimated} as in \cite{schlottke2021purely},
\begin{equation}\label{eq:cfl}
    \Delta t = \min_{i,j} \left( \frac{\mathrm{CFL}}{(N+1)} \frac{h}{\lambda_{ij}^{\max}} \right),
\end{equation}
where $h$ is the element size, \change{$N$ is the polynomial degree of the DG approximation,} the nodal maximum wave speed $\lambda^{\max}_{ij}$ is computed using \eqref{eq:lambdamax_nodal}, and we use CFL$=0.5$ for all cases, unless otherwise stated.
}

At each time step, we compute the explicit time-step size using \eqref{eq:cfl}, and then compute the divergence cleaning speed, such that the GLM technique does not affect the stability of the method:
\begin{equation}
    \red{
    c_h = \frac{\nu}{\Delta t}
    \underbrace{
    \frac{\mathrm{CFL} \, h}{2(N+1)}
    }_{\Delta t_{c_h}},
    }
\end{equation}
where $\Delta t_{c_h}$ is the explicit time-step size that corresponds to $c_h=1$ and $\nu$ is a scaling constant that we choose as $\nu=0.5$.

All the simulations of this section were run with the two-dimensional Cartesian \texttt{TreeMesh} solver of the open-source framework \texttt{Trixi.jl} \cite{schlottkelakemper2020trixi, ranocha2022adaptive, schlottke2021purely}.

\subsection{Convergence Test} \label{sec:convergence}

To test the convergence properties of the new schemes, we solve a manufactured solution test case for a two-species collisionless plasma.
We assume an exact solution to the ideal multi-ion GLM-MHD system of the form
\begin{align}\label{eq:mansol}
    \rho_1 (\vec{x}, t) &= \chi_1, 
    &
    \rho_2  (\vec{x}, t) &= \chi_2, \nonumber\\
    \rho_1 v_{1,1} (\vec{x}, t) &= \chi_1, &
    \rho_2 v_{2,1} (\vec{x}, t) &= \chi_2,\nonumber\\
    \rho_1 v_{1,2} (\vec{x}, t) &= \chi_1,&
    \rho_2 v_{2,2} (\vec{x}, t) &= \chi_2,\nonumber\\
    \rho_1 v_{1,3} (\vec{x}, t) &= 0.1 \chi_1,&
    \rho_2 v_{2,3} (\vec{x}, t) &= 0.1 \chi_2,\nonumber\\
    E_1 (\vec{x}, t) &= 2 \chi_1^2+\chi_1, &
    E_2 (\vec{x}, t) &= 2 \chi_2^2+\chi_2,\nonumber\\
    B_1 (\vec{x}, t) &= 0.25 \chi, &
    B_2 (\vec{x}, t) &= -0.25 \chi,\nonumber\\
    B_3 (\vec{x}, t) &= 0.1 \chi, &
    \red{\psi(\vec{x}, t)} &\red{=0,}
\end{align}
with the auxiliary variables
\begin{align*}
    \chi &= \underbrace{0.1 \sin(\pi (x + y - t))}_{:= \chi_0} + 2,\\
    \chi_1 &= 0.04 \sin(\pi (x + y - t)) + 1,\\
    \chi_2 &= \chi-\chi_1.
\end{align*}
To consider important multi-ion effects, we use different heat capacity ratios for the two ion species, $\gamma_1 = 2$ and $\gamma_2 = 4$, and also different charge-to-mass rations, $r_1=2$ and $r_2=1$.
Moreover, we use a non-trivial electron pressure to test the convergence properties of our scheme using all the terms of the equation.
In particular, we use \cite{Toth2010}
\begin{equation}\label{eq:pe_alpha}
    p_e = \alpha \sum_{k=1}^{N_i} p_k,
\end{equation}
with $\alpha \ne 0$.
Equation \eqref{eq:pe_alpha} is equivalent to taking the electron pressure gradient term in the
momentum and energy equations to be equal to a fraction of the total ion pressure
gradient.
For the numerical experiments of this section, we use $\alpha = 0.2$, but we would like to remark that the choice of $\alpha$ depends on the application. 
For instance, \citet{glocer2009multifluid} used 
$\alpha = 0.2$ for simulations of the interaction of the solar wind with Earth's magnetosphere, whereas \citet{najib2011three} used a value of
$\alpha = 1$ to simulate the interaction of the solar wind with Mars.

We insert \eqref{eq:mansol} into \eqref{eq:multi-ion_mod_div} and use the symbolic algebra tool \texttt{Maxima} \cite{maxima} to obtain the source term:
\begin{align} \label{eq:mansol_source}
\state{s} =
    \begin{pmatrix}
    \frac{2 \, \chi_x}{5} \\
\frac{38055 \, \chi_x \, \chi_0^2 + 185541 \, \chi_x \, \chi_0 + 220190 \, \chi_x}{35000 \, \chi_0 + 75000} \\
\frac{38055 \, \chi_x \, \chi_0^2 + 185541 \, \chi_x \, \chi_0 + 220190 \, \chi_x}{35000 \, \chi_0 + 75000} \\
\frac{\chi_x}{25} \\
\frac{1835811702576186755 \, \chi_x \, \chi_0^2 + 8592627463681183181 \, \chi_x \, \chi_0 + 9884050459977240490 \, \chi_x}{652252660543767500 \, \chi_0 + 1397684272593787500} \\
\frac{3 \, \chi_x}{5} \\
\frac{76155 \, \chi_x \, \chi_0^2 + 295306 \, \chi_x \, \chi_0 + 284435 \, \chi_x}{17500 \, \chi_0 + 37500} \\
\frac{76155 \, \chi_x \, \chi_0^2 + 295306 \, \chi_x \, \chi_0 + 284435 \, \chi_x}{17500 \, \chi_0 + 37500} \\
\frac{3 \, \chi_x}{50} \\
\frac{88755 \, \chi_x \, \chi_0^2 + 338056 \, \chi_x \, \chi_0 + 318185 \, \chi_x}{8750 \, \chi_0 + 18750} \\
\frac{ \chi_x}{4} \\
\frac{ -\chi_x}{4} \\
\frac{ \chi_x}{10} \\
\red{0}
    \end{pmatrix}.
\end{align}

Equation \eqref{eq:mansol_source} shows that the source term required to derive the manufactured solution  \eqref{eq:mansol} is highly intricate. 
This complexity arises from the intricate interdependencies among ions and the nonlinearities inherent in the ideal multi-ion GLM-MHD system.

We solve the multi-ion GLM-MHD system with the initial condition given by \eqref{eq:mansol} and the source term \eqref{eq:mansol_source} in the domain $\Omega = [-1,1]^2$ with periodic boundary conditions, the final time $t_f = 1$, and three different solvers: 
\begin{enumerate}
    \item[(i)] \textbf{EC:} A split-form entropy-conservative LGL-DGSEM that uses the EC fluxes and non-conservative terms \eqref{eq:ECFlux}-\eqref{eq:ECGLM} in both the volume and surface numerical fluxes and terms of \eqref{eq:DGSEM},
    \item[(ii)] \textbf{ES:} A split-form provably entropy-stable LGL-DGSEM that uses the EC fluxes and non-conservative terms \eqref{eq:ECFlux}-\eqref{eq:ECGLM} in the volume numerical fluxes and non-conservative terms, and the entropy-dissipative LLF-like fluxes and non-conservative term \eqref{eq:esflux} in the surface numerical fluxes and terms of \eqref{eq:DGSEM}, and 
    \item[(iii)] \textbf{EC+LLF:} A dissipative split-form LGL-DGSEM that uses the EC fluxes and non-conservative terms \eqref{eq:ECFlux}-\eqref{eq:ECGLM} in the volume numerical fluxes and non-conservative terms, and the standard LLF solver \eqref{eq:llf} in the surface numerical fluxes and terms of \eqref{eq:DGSEM}.
\end{enumerate}

Tables \ref{tab:eoc_ec_1} to \ref{tab:eoc_ec_4} show the errors and experimental orders of convergence (EOCs) obtained with the \textbf{EC} LGL-DGSEM solver (i) for the manufactured solution test at time $t_f=1$.
We observe high-order convergence, but also an even-odd effect, in which even polynomial degrees exhibit an optimal convergence order of approximately $\mathcal{O}(N+1)$ and odd polynomial degrees exhibit a sub-optimal convergence order of approximately $\mathcal{O}(N)$.
This effect has been observed for EC discretizations of other equations, e.g. \cite{hindenlang2020order, wintermeyer2017entropy}.

Tables \ref{tab:eoc_es_1} to \ref{tab:eoc_es_4} show the errors and experimental orders of convergence (EOCs) obtained with the \textbf{ES} (ii) and \textbf{EC+LLF} (iii) solvers for the manufactured solution test at time $t_f=1$.
The experimental orders of convergence and the errors are identical for the solver (iii) and the solver (ii) for the precision shown.

With solvers (ii) and (iii), we observe high-order convergence without any even-odd effect.
All polynomial degrees exhibit an optimal convergence order of approximately $\mathcal{O}(N+1)$.


\begin{table}[]
    \centering
    \caption{$L_2$ errors and EOCs for the convergence test with the \textbf{EC} scheme (i) for $N=2$ and different numbers of elements per direction $N_e$.}
    \label{tab:eoc_ec_1}
    \resizebox{\columnwidth}{!}{
    \begin{tabular}{ccccccccccccccc}
\hline
$N_e$ &
$\norm{\epsilon_{B_1}}$ & EOC &
$\norm{\epsilon_{B_2}}$ & EOC &
$\norm{\epsilon_{B_3}}$ & EOC &
$\norm{\epsilon_{\rho_1}}$ & EOC &
$\norm{\epsilon_{\rho_1 v_{1,1}}}$ & EOC &
$\norm{\epsilon_{\rho_1 v_{1,2}}}$ & EOC &
$\norm{\epsilon_{\rho_1 v_{1,3}}}$ & EOC 
\\
\hline
$16$ & $1.39 \times 10^{-5}$ & $-$ & $1.39 \times 10^{-5}$ & $-$ & $4.17 \times 10^{-6}$ & $-$ & $2.23 \times 10^{-5}$ & $-$ & $3.55 \times 10^{-5}$ & $-$ & $3.55 \times 10^{-5}$ & $-$ & $6.20 \times 10^{-6}$ & $-$ \\
$32$ & $1.64 \times 10^{-6}$ & $3.09$ & $1.64 \times 10^{-6}$ & $3.08$ & $7.18 \times 10^{-7}$ & $2.54$ & $3.72 \times 10^{-6}$ & $2.58$ & $4.54 \times 10^{-6}$ & $2.97$ & $4.55 \times 10^{-6}$ & $2.96$ & $4.87 \times 10^{-7}$ & $3.67$ \\
$64$ & $1.76 \times 10^{-7}$ & $3.22$ & $1.76 \times 10^{-7}$ & $3.22$ & $8.47 \times 10^{-8}$ & $3.08$ & $3.38 \times 10^{-7}$ & $3.46$ & $4.01 \times 10^{-7}$ & $3.50$ & $4.01 \times 10^{-7}$ & $3.50$ & $5.61 \times 10^{-8}$ & $3.12$ \\
$128$ & $3.91 \times 10^{-8}$ & $2.17$ & $3.91 \times 10^{-8}$ & $2.17$ & $1.23 \times 10^{-8}$ & $2.78$ & $4.88 \times 10^{-8}$ & $2.79$ & $7.53 \times 10^{-8}$ & $2.41$ & $7.53 \times 10^{-8}$ & $2.41$ & $6.70 \times 10^{-9}$ & $3.07$ \\
mean & & $2.83$ & & $2.82$ & & $2.80$ & & $2.94$ & & $2.96$ & & $2.96$ & & $3.29$ \\
\hline
$N_e$ &
$\norm{\epsilon_{E_1}}$ & EOC &
$\norm{\epsilon_{\rho_2}}$ & EOC &
$\norm{\epsilon_{\rho_2 v_{2,1}}}$ & EOC &
$\norm{\epsilon_{\rho_2 v_{2,2}}}$ & EOC &
$\norm{\epsilon_{\rho_2 v_{2,3}}}$ & EOC &
$\norm{\epsilon_{E_2}}$ & EOC &
$\norm{\epsilon_{\psi}}$ & EOC 
\\
\hline
$16$ & $1.10 \times 10^{-4}$ & $-$ & $3.12 \times 10^{-5}$ & $-$ & $7.37 \times 10^{-5}$ & $-$ & $7.37 \times 10^{-5}$ & $-$ & $1.58 \times 10^{-5}$ & $-$ & $1.79 \times 10^{-4}$ & $-$ & $5.56 \times 10^{-6}$ & $-$ \\
$32$ & $1.79 \times 10^{-5}$ & $2.62$ & $3.71 \times 10^{-6}$ & $3.07$ & $6.11 \times 10^{-6}$ & $3.59$ & $6.10 \times 10^{-6}$ & $3.59$ & $8.59 \times 10^{-7}$ & $4.21$ & $1.48 \times 10^{-5}$ & $3.59$ & $1.51 \times 10^{-6}$ & $1.88$ \\
$64$ & $1.54 \times 10^{-6}$ & $3.54$ & $4.58 \times 10^{-7}$ & $3.02$ & $7.02 \times 10^{-7}$ & $3.12$ & $7.02 \times 10^{-7}$ & $3.12$ & $5.40 \times 10^{-8}$ & $3.99$ & $1.97 \times 10^{-6}$ & $2.92$ & $1.79 \times 10^{-7}$ & $3.08$ \\
$128$ & $2.81 \times 10^{-7}$ & $2.46$ & $6.45 \times 10^{-8}$ & $2.83$ & $9.76 \times 10^{-8}$ & $2.85$ & $9.76 \times 10^{-8}$ & $2.85$ & $5.88 \times 10^{-9}$ & $3.20$ & $3.49 \times 10^{-7}$ & $2.50$ & $9.87 \times 10^{-9}$ & $4.18$ \\
mean & & $2.87$ & & $2.97$ & & $3.19$ & & $3.19$ & & $3.80$ & & $3.00$ & & $3.05$ \\
\hline
\end{tabular}
}
\end{table}

\begin{table}[]
    \centering
    \caption{$L_2$ errors and EOCs for the convergence test with the \textbf{EC} scheme (i) for $N=3$ and different numbers of elements per direction $N_e$.}
    \resizebox{\columnwidth}{!}{
    \begin{tabular}{ccccccccccccccc}
\hline
$N_e$ &
$\norm{\epsilon_{B_1}}$ & EOC &
$\norm{\epsilon_{B_2}}$ & EOC &
$\norm{\epsilon_{B_3}}$ & EOC &
$\norm{\epsilon_{\rho_1}}$ & EOC &
$\norm{\epsilon_{\rho_1 v_{1,1}}}$ & EOC &
$\norm{\epsilon_{\rho_1 v_{1,2}}}$ & EOC &
$\norm{\epsilon_{\rho_1 v_{1,3}}}$ & EOC 
\\
\hline
$8$ & $1.09 \times 10^{-4}$ & $-$ & $1.09 \times 10^{-4}$ & $-$ & $1.42 \times 10^{-5}$ & $-$ & $4.19 \times 10^{-5}$ & $-$ & $9.53 \times 10^{-5}$ & $-$ & $9.53 \times 10^{-5}$ & $-$ & $2.28 \times 10^{-5}$ & $-$ \\
$16$ & $1.17 \times 10^{-5}$ & $3.22$ & $1.17 \times 10^{-5}$ & $3.22$ & $1.43 \times 10^{-6}$ & $3.30$ & $4.38 \times 10^{-6}$ & $3.26$ & $1.02 \times 10^{-5}$ & $3.22$ & $1.02 \times 10^{-5}$ & $3.23$ & $2.77 \times 10^{-6}$ & $3.05$ \\
$32$ & $1.41 \times 10^{-6}$ & $3.06$ & $1.41 \times 10^{-6}$ & $3.06$ & $1.70 \times 10^{-7}$ & $3.08$ & $5.12 \times 10^{-7}$ & $3.09$ & $1.22 \times 10^{-6}$ & $3.07$ & $1.21 \times 10^{-6}$ & $3.07$ & $3.44 \times 10^{-7}$ & $3.01$ \\
$64$ & $1.74 \times 10^{-7}$ & $3.01$ & $1.74 \times 10^{-7}$ & $3.01$ & $2.09 \times 10^{-8}$ & $3.02$ & $6.30 \times 10^{-8}$ & $3.02$ & $1.50 \times 10^{-7}$ & $3.02$ & $1.50 \times 10^{-7}$ & $3.02$ & $4.29 \times 10^{-8}$ & $3.00$ \\
mean & & $3.10$ & & $3.10$ & & $3.13$ & & $3.12$ & & $3.10$ & & $3.11$ & & $3.02$ \\
\hline
$N_e$ &
$\norm{\epsilon_{E_1}}$ & EOC &
$\norm{\epsilon_{\rho_2}}$ & EOC &
$\norm{\epsilon_{\rho_2 v_{2,1}}}$ & EOC &
$\norm{\epsilon_{\rho_2 v_{2,2}}}$ & EOC &
$\norm{\epsilon_{\rho_2 v_{2,3}}}$ & EOC &
$\norm{\epsilon_{E_2}}$ & EOC &
$\norm{\epsilon_{\psi}}$ & EOC 
\\
\hline
$8$ & $2.53 \times 10^{-4}$ & $-$ & $5.44 \times 10^{-5}$ & $-$ & $1.96 \times 10^{-4}$ & $-$ & $1.96 \times 10^{-4}$ & $-$ & $7.03 \times 10^{-5}$ & $-$ & $5.36 \times 10^{-4}$ & $-$ & $1.10 \times 10^{-5}$ & $-$ \\
$16$ & $2.74 \times 10^{-5}$ & $3.20$ & $4.61 \times 10^{-6}$ & $3.56$ & $2.17 \times 10^{-5}$ & $3.17$ & $2.17 \times 10^{-5}$ & $3.17$ & $8.08 \times 10^{-6}$ & $3.12$ & $5.12 \times 10^{-5}$ & $3.39$ & $1.92 \times 10^{-7}$ & $5.84$ \\
$32$ & $3.27 \times 10^{-6}$ & $3.07$ & $5.30 \times 10^{-7}$ & $3.12$ & $2.60 \times 10^{-6}$ & $3.06$ & $2.61 \times 10^{-6}$ & $3.06$ & $9.90 \times 10^{-7}$ & $3.03$ & $6.04 \times 10^{-6}$ & $3.09$ & $1.56 \times 10^{-8}$ & $3.63$ \\
$64$ & $4.04 \times 10^{-7}$ & $3.02$ & $6.52 \times 10^{-8}$ & $3.02$ & $3.21 \times 10^{-7}$ & $3.01$ & $3.23 \times 10^{-7}$ & $3.01$ & $1.23 \times 10^{-7}$ & $3.01$ & $7.45 \times 10^{-7}$ & $3.02$ & $3.10 \times 10^{-9}$ & $2.33$ \\
mean & & $3.10$ & & $3.23$ & & $3.08$ & & $3.08$ & & $3.05$ & & $3.17$ & & $3.93$ \\
\hline
\end{tabular}
}
\end{table}

\begin{table}[]
    \centering
    \caption{$L_2$ errors and EOCs for the convergence test with the \textbf{EC} scheme (i) for $N=4$ and different numbers of elements per direction $N_e$.}
    \resizebox{\columnwidth}{!}{
    \begin{tabular}{ccccccccccccccc}
\hline
$N_e$ &
$\norm{\epsilon_{B_1}}$ & EOC &
$\norm{\epsilon_{B_2}}$ & EOC &
$\norm{\epsilon_{B_3}}$ & EOC &
$\norm{\epsilon_{\rho_1}}$ & EOC &
$\norm{\epsilon_{\rho_1 v_{1,1}}}$ & EOC &
$\norm{\epsilon_{\rho_1 v_{1,2}}}$ & EOC &
$\norm{\epsilon_{\rho_1 v_{1,3}}}$ & EOC 
\\
\hline
$8$ & $4.97 \times 10^{-7}$ & $-$ & $4.97 \times 10^{-7}$ & $-$ & $5.78 \times 10^{-8}$ & $-$ & $2.61 \times 10^{-7}$ & $-$ & $4.87 \times 10^{-7}$ & $-$ & $4.87 \times 10^{-7}$ & $-$ & $1.24 \times 10^{-7}$ & $-$ \\
$16$ & $6.77 \times 10^{-9}$ & $6.20$ & $6.77 \times 10^{-9}$ & $6.20$ & $1.97 \times 10^{-9}$ & $4.88$ & $7.79 \times 10^{-9}$ & $5.07$ & $1.35 \times 10^{-8}$ & $5.17$ & $1.35 \times 10^{-8}$ & $5.17$ & $1.97 \times 10^{-9}$ & $5.98$ \\
$32$ & $1.61 \times 10^{-10}$ & $5.40$ & $1.61 \times 10^{-10}$ & $5.40$ & $6.95 \times 10^{-11}$ & $4.82$ & $3.35 \times 10^{-10}$ & $4.54$ & $4.20 \times 10^{-10}$ & $5.00$ & $4.20 \times 10^{-10}$ & $5.00$ & $5.31 \times 10^{-11}$ & $5.21$ \\
$64$ & $5.04 \times 10^{-12}$ & $4.99$ & $5.04 \times 10^{-12}$ & $4.99$ & $1.92 \times 10^{-12}$ & $5.18$ & $1.02 \times 10^{-11}$ & $5.04$ & $1.44 \times 10^{-11}$ & $4.87$ & $1.44 \times 10^{-11}$ & $4.86$ & $1.16 \times 10^{-12}$ & $5.52$ \\
mean & & $5.53$ & & $5.53$ & & $4.96$ & & $4.88$ & & $5.01$ & & $5.01$ & & $5.57$ \\
\hline
$N_e$ &
$\norm{\epsilon_{E_1}}$ & EOC &
$\norm{\epsilon_{\rho_2}}$ & EOC &
$\norm{\epsilon_{\rho_2 v_{2,1}}}$ & EOC &
$\norm{\epsilon_{\rho_2 v_{2,2}}}$ & EOC &
$\norm{\epsilon_{\rho_2 v_{2,3}}}$ & EOC &
$\norm{\epsilon_{E_2}}$ & EOC &
$\norm{\epsilon_{\psi}}$ & EOC 
\\
\hline
$8$ & $1.15 \times 10^{-6}$ & $-$ & $3.64 \times 10^{-7}$ & $-$ & $6.03 \times 10^{-7}$ & $-$ & $6.03 \times 10^{-7}$ & $-$ & $1.98 \times 10^{-7}$ & $-$ & $2.08 \times 10^{-6}$ & $-$ & $9.89 \times 10^{-8}$ & $-$ \\
$16$ & $2.93 \times 10^{-8}$ & $5.30$ & $1.23 \times 10^{-8}$ & $4.88$ & $2.26 \times 10^{-8}$ & $4.74$ & $2.26 \times 10^{-8}$ & $4.74$ & $3.97 \times 10^{-9}$ & $5.64$ & $5.72 \times 10^{-8}$ & $5.18$ & $6.05 \times 10^{-9}$ & $4.03$ \\
$32$ & $1.56 \times 10^{-9}$ & $4.23$ & $3.84 \times 10^{-10}$ & $5.01$ & $7.07 \times 10^{-10}$ & $5.00$ & $7.07 \times 10^{-10}$ & $5.00$ & $4.89 \times 10^{-11}$ & $6.34$ & $2.17 \times 10^{-9}$ & $4.72$ & $1.20 \times 10^{-10}$ & $5.66$ \\
$64$ & $4.96 \times 10^{-11}$ & $4.97$ & $1.59 \times 10^{-11}$ & $4.59$ & $2.29 \times 10^{-11}$ & $4.95$ & $2.29 \times 10^{-11}$ & $4.95$ & $1.98 \times 10^{-12}$ & $4.63$ & $7.23 \times 10^{-11}$ & $4.91$ & $4.27 \times 10^{-12}$ & $4.81$ \\
mean & & $4.83$ & & $4.83$ & & $4.90$ & & $4.90$ & & $5.54$ & & $4.94$ & & $4.83$ \\
\hline
\end{tabular}
}
\end{table}

\begin{table}[]
    \centering
    \caption{$L_2$ errors and EOCs for the convergence test with the \textbf{EC} scheme (i) for $N=5$ and different numbers of elements per direction $N_e$.}
    \label{tab:eoc_ec_4}
    \resizebox{\columnwidth}{!}{
    \begin{tabular}{ccccccccccccccc}
\hline
$N_e$ &
$\norm{\epsilon_{B_1}}$ & EOC &
$\norm{\epsilon_{B_2}}$ & EOC &
$\norm{\epsilon_{B_3}}$ & EOC &
$\norm{\epsilon_{\rho_1}}$ & EOC &
$\norm{\epsilon_{\rho_1 v_{1,1}}}$ & EOC &
$\norm{\epsilon_{\rho_1 v_{1,2}}}$ & EOC &
$\norm{\epsilon_{\rho_1 v_{1,3}}}$ & EOC 
\\
\hline
$4$ & $1.47 \times 10^{-6}$ & $-$ & $1.47 \times 10^{-6}$ & $-$ & $2.28 \times 10^{-7}$ & $-$ & $1.26 \times 10^{-6}$ & $-$ & $2.32 \times 10^{-6}$ & $-$ & $2.29 \times 10^{-6}$ & $-$ & $7.40 \times 10^{-7}$ & $-$ \\
$8$ & $4.47 \times 10^{-8}$ & $5.04$ & $4.48 \times 10^{-8}$ & $5.03$ & $8.50 \times 10^{-9}$ & $4.75$ & $4.23 \times 10^{-8}$ & $4.90$ & $1.02 \times 10^{-7}$ & $4.51$ & $1.01 \times 10^{-7}$ & $4.51$ & $1.52 \times 10^{-8}$ & $5.61$ \\
$16$ & $1.41 \times 10^{-9}$ & $4.99$ & $1.41 \times 10^{-9}$ & $4.99$ & $3.69 \times 10^{-10}$ & $4.53$ & $1.56 \times 10^{-9}$ & $4.76$ & $3.23 \times 10^{-9}$ & $4.97$ & $3.21 \times 10^{-9}$ & $4.97$ & $4.54 \times 10^{-10}$ & $5.06$ \\
$32$ & $4.43 \times 10^{-11}$ & $4.99$ & $4.43 \times 10^{-11}$ & $4.99$ & $1.17 \times 10^{-11}$ & $4.98$ & $4.83 \times 10^{-11}$ & $5.01$ & $9.87 \times 10^{-11}$ & $5.03$ & $9.79 \times 10^{-11}$ & $5.03$ & $1.43 \times 10^{-11}$ & $4.99$ \\
mean & & $5.01$ & & $5.00$ & & $4.75$ & & $4.89$ & & $4.84$ & & $4.84$ & & $5.22$ \\
\hline
$N_e$ &
$\norm{\epsilon_{E_1}}$ & EOC &
$\norm{\epsilon_{\rho_2}}$ & EOC &
$\norm{\epsilon_{\rho_2 v_{2,1}}}$ & EOC &
$\norm{\epsilon_{\rho_2 v_{2,2}}}$ & EOC &
$\norm{\epsilon_{\rho_2 v_{2,3}}}$ & EOC &
$\norm{\epsilon_{E_2}}$ & EOC &
$\norm{\epsilon_{\psi}}$ & EOC 
\\
\hline
$4$ & $7.11 \times 10^{-6}$ & $-$ & $1.24 \times 10^{-6}$ & $-$ & $3.58 \times 10^{-6}$ & $-$ & $3.63 \times 10^{-6}$ & $-$ & $1.55 \times 10^{-6}$ & $-$ & $1.23 \times 10^{-5}$ & $-$ & $3.69 \times 10^{-7}$ & $-$ \\
$8$ & $2.80 \times 10^{-7}$ & $4.67$ & $3.87 \times 10^{-8}$ & $5.00$ & $1.52 \times 10^{-7}$ & $4.56$ & $1.53 \times 10^{-7}$ & $4.56$ & $4.10 \times 10^{-8}$ & $5.24$ & $3.23 \times 10^{-7}$ & $5.26$ & $5.94 \times 10^{-9}$ & $5.96$ \\
$16$ & $9.64 \times 10^{-9}$ & $4.86$ & $2.10 \times 10^{-9}$ & $4.20$ & $5.63 \times 10^{-9}$ & $4.76$ & $5.66 \times 10^{-9}$ & $4.76$ & $1.32 \times 10^{-9}$ & $4.96$ & $1.61 \times 10^{-8}$ & $4.33$ & $4.55 \times 10^{-10}$ & $3.71$ \\
$32$ & $2.91 \times 10^{-10}$ & $5.05$ & $6.91 \times 10^{-11}$ & $4.93$ & $1.75 \times 10^{-10}$ & $5.01$ & $1.76 \times 10^{-10}$ & $5.01$ & $4.12 \times 10^{-11}$ & $5.00$ & $5.28 \times 10^{-10}$ & $4.93$ & $1.37 \times 10^{-11}$ & $5.05$ \\
mean & & $4.86$ & & $4.71$ & & $4.78$ & & $4.78$ & & $5.07$ & & $4.84$ & & $4.91$ \\
\hline
\end{tabular}
}
\end{table}

\begin{table}[]
    \centering
    \caption{$L_2$ errors and EOCs for the convergence test with the \textbf{ES} (ii) and \textbf{EC+LLF} (iii) schemes for $N=2$ and different numbers of elements per direction $N_e$.}
    \label{tab:eoc_es_1}
    \resizebox{\columnwidth}{!}{
    \begin{tabular}{ccccccccccccccc}
\hline
$N_e$ &
$\norm{\epsilon_{B_1}}$ & EOC &
$\norm{\epsilon_{B_2}}$ & EOC &
$\norm{\epsilon_{B_3}}$ & EOC &
$\norm{\epsilon_{\rho_1}}$ & EOC &
$\norm{\epsilon_{\rho_1 v_{1,1}}}$ & EOC &
$\norm{\epsilon_{\rho_1 v_{1,2}}}$ & EOC &
$\norm{\epsilon_{\rho_1 v_{1,3}}}$ & EOC 
\\
\hline
$16$ & $2.03 \times 10^{-5}$ & $-$ & $2.04 \times 10^{-5}$ & $-$ & $1.11 \times 10^{-5}$ & $-$ & $3.14 \times 10^{-5}$ & $-$ & $9.23 \times 10^{-5}$ & $-$ & $9.20 \times 10^{-5}$ & $-$ & $5.04 \times 10^{-6}$ & $-$ \\
$32$ & $1.71 \times 10^{-6}$ & $3.57$ & $1.72 \times 10^{-6}$ & $3.57$ & $1.26 \times 10^{-6}$ & $3.13$ & $4.51 \times 10^{-6}$ & $2.80$ & $1.16 \times 10^{-5}$ & $2.99$ & $1.16 \times 10^{-5}$ & $2.98$ & $5.43 \times 10^{-7}$ & $3.21$ \\
$64$ & $2.26 \times 10^{-7}$ & $2.92$ & $2.25 \times 10^{-7}$ & $2.93$ & $1.25 \times 10^{-7}$ & $3.33$ & $6.02 \times 10^{-7}$ & $2.90$ & $1.47 \times 10^{-6}$ & $2.98$ & $1.47 \times 10^{-6}$ & $2.98$ & $6.13 \times 10^{-8}$ & $3.15$ \\
$128$ & $3.04 \times 10^{-8}$ & $2.90$ & $3.03 \times 10^{-8}$ & $2.90$ & $1.34 \times 10^{-8}$ & $3.23$ & $7.72 \times 10^{-8}$ & $2.96$ & $1.85 \times 10^{-7}$ & $2.99$ & $1.85 \times 10^{-7}$ & $2.99$ & $7.75 \times 10^{-9}$ & $2.98$ \\
mean & & $3.13$ & & $3.13$ & & $3.23$ & & $2.89$ & & $2.99$ & & $2.98$ & & $3.11$ \\
\hline
$N_e$ &
$\norm{\epsilon_{E_1}}$ & EOC &
$\norm{\epsilon_{\rho_2}}$ & EOC &
$\norm{\epsilon_{\rho_2 v_{2,1}}}$ & EOC &
$\norm{\epsilon_{\rho_2 v_{2,2}}}$ & EOC &
$\norm{\epsilon_{\rho_2 v_{2,3}}}$ & EOC &
$\norm{\epsilon_{E_2}}$ & EOC &
$\norm{\epsilon_{\psi}}$ & EOC 
\\
\hline
$16$ & $1.95 \times 10^{-4}$ & $-$ & $6.77 \times 10^{-5}$ & $-$ & $1.33 \times 10^{-4}$ & $-$ & $1.33 \times 10^{-4}$ & $-$ & $6.10 \times 10^{-6}$ & $-$ & $2.24 \times 10^{-4}$ & $-$ & $2.65 \times 10^{-5}$ & $-$ \\
$32$ & $1.92 \times 10^{-5}$ & $3.34$ & $1.04 \times 10^{-5}$ & $2.71$ & $1.79 \times 10^{-5}$ & $2.89$ & $1.79 \times 10^{-5}$ & $2.89$ & $9.29 \times 10^{-7}$ & $2.71$ & $2.69 \times 10^{-5}$ & $3.06$ & $3.71 \times 10^{-6}$ & $2.83$ \\
$64$ & $1.86 \times 10^{-6}$ & $3.37$ & $1.40 \times 10^{-6}$ & $2.89$ & $2.31 \times 10^{-6}$ & $2.96$ & $2.31 \times 10^{-6}$ & $2.96$ & $1.30 \times 10^{-7}$ & $2.83$ & $3.34 \times 10^{-6}$ & $3.01$ & $4.83 \times 10^{-7}$ & $2.94$ \\
$128$ & $2.02 \times 10^{-7}$ & $3.20$ & $1.79 \times 10^{-7}$ & $2.97$ & $2.91 \times 10^{-7}$ & $2.99$ & $2.91 \times 10^{-7}$ & $2.99$ & $1.72 \times 10^{-8}$ & $2.92$ & $4.17 \times 10^{-7}$ & $3.00$ & $6.10 \times 10^{-8}$ & $2.99$ \\
mean & & $3.30$ & & $2.86$ & & $2.95$ & & $2.95$ & & $2.82$ & & $3.02$ & & $2.92$ \\
\hline
\end{tabular}
}
\end{table}

\begin{table}[]
    \centering
    \caption{$L_2$ errors and EOCs for the convergence test with the \textbf{ES} (ii) and \textbf{EC+LLF} (iii) schemes for $N=3$ and different numbers of elements per direction $N_e$.}
    \resizebox{\columnwidth}{!}{
    \begin{tabular}{ccccccccccccccc}
\hline
$N_e$ &
$\norm{\epsilon_{B_1}}$ & EOC &
$\norm{\epsilon_{B_2}}$ & EOC &
$\norm{\epsilon_{B_3}}$ & EOC &
$\norm{\epsilon_{\rho_1}}$ & EOC &
$\norm{\epsilon_{\rho_1 v_{1,1}}}$ & EOC &
$\norm{\epsilon_{\rho_1 v_{1,2}}}$ & EOC &
$\norm{\epsilon_{\rho_1 v_{1,3}}}$ & EOC 
\\
\hline
$8$ & $7.33 \times 10^{-6}$ & $-$ & $7.34 \times 10^{-6}$ & $-$ & $2.03 \times 10^{-6}$ & $-$ & $7.07 \times 10^{-6}$ & $-$ & $1.26 \times 10^{-5}$ & $-$ & $1.26 \times 10^{-5}$ & $-$ & $2.24 \times 10^{-6}$ & $-$ \\
$16$ & $3.79 \times 10^{-7}$ & $4.27$ & $3.80 \times 10^{-7}$ & $4.27$ & $9.40 \times 10^{-8}$ & $4.43$ & $3.81 \times 10^{-7}$ & $4.21$ & $6.38 \times 10^{-7}$ & $4.30$ & $6.37 \times 10^{-7}$ & $4.31$ & $5.24 \times 10^{-8}$ & $5.42$ \\
$32$ & $1.55 \times 10^{-8}$ & $4.61$ & $1.55 \times 10^{-8}$ & $4.61$ & $6.34 \times 10^{-9}$ & $3.89$ & $2.56 \times 10^{-8}$ & $3.90$ & $3.93 \times 10^{-8}$ & $4.02$ & $3.92 \times 10^{-8}$ & $4.02$ & $2.58 \times 10^{-9}$ & $4.34$ \\
$64$ & $9.84 \times 10^{-10}$ & $3.98$ & $9.84 \times 10^{-10}$ & $3.98$ & $3.87 \times 10^{-10}$ & $4.03$ & $1.58 \times 10^{-9}$ & $4.02$ & $2.37 \times 10^{-9}$ & $4.05$ & $2.37 \times 10^{-9}$ & $4.05$ & $1.55 \times 10^{-10}$ & $4.06$ \\
mean & & $4.29$ & & $4.29$ & & $4.12$ & & $4.04$ & & $4.12$ & & $4.13$ & & $4.61$ \\
\hline
$N_e$ &
$\norm{\epsilon_{E_1}}$ & EOC &
$\norm{\epsilon_{\rho_2}}$ & EOC &
$\norm{\epsilon_{\rho_2 v_{2,1}}}$ & EOC &
$\norm{\epsilon_{\rho_2 v_{2,2}}}$ & EOC &
$\norm{\epsilon_{\rho_2 v_{2,3}}}$ & EOC &
$\norm{\epsilon_{E_2}}$ & EOC &
$\norm{\epsilon_{\psi}}$ & EOC 
\\
\hline
$8$ & $3.63 \times 10^{-5}$ & $-$ & $1.09 \times 10^{-5}$ & $-$ & $2.81 \times 10^{-5}$ & $-$ & $2.81 \times 10^{-5}$ & $-$ & $4.55 \times 10^{-6}$ & $-$ & $6.64 \times 10^{-5}$ & $-$ & $3.04 \times 10^{-6}$ & $-$ \\
$16$ & $2.15 \times 10^{-6}$ & $4.08$ & $5.54 \times 10^{-7}$ & $4.30$ & $1.36 \times 10^{-6}$ & $4.36$ & $1.36 \times 10^{-6}$ & $4.36$ & $7.30 \times 10^{-8}$ & $5.96$ & $3.67 \times 10^{-6}$ & $4.18$ & $1.68 \times 10^{-7}$ & $4.18$ \\
$32$ & $1.40 \times 10^{-7}$ & $3.94$ & $3.68 \times 10^{-8}$ & $3.91$ & $8.44 \times 10^{-8}$ & $4.01$ & $8.44 \times 10^{-8}$ & $4.01$ & $3.85 \times 10^{-9}$ & $4.25$ & $2.31 \times 10^{-7}$ & $3.99$ & $1.01 \times 10^{-8}$ & $4.05$ \\
$64$ & $8.61 \times 10^{-9}$ & $4.02$ & $2.33 \times 10^{-9}$ & $3.98$ & $5.22 \times 10^{-9}$ & $4.01$ & $5.22 \times 10^{-9}$ & $4.01$ & $2.41 \times 10^{-10}$ & $4.00$ & $1.44 \times 10^{-8}$ & $4.00$ & $6.28 \times 10^{-10}$ & $4.01$ \\
mean & & $4.01$ & & $4.06$ & & $4.13$ & & $4.13$ & & $4.74$ & & $4.06$ & & $4.08$ \\
\hline
\end{tabular}
}
\end{table}

\begin{table}[]
    \centering
    \caption{$L_2$ errors and EOCs for the convergence test with the \textbf{ES} (ii) and \textbf{EC+LLF} (iii) schemes for $N=4$ and different numbers of elements per direction $N_e$.}
    \resizebox{\columnwidth}{!}{
    \begin{tabular}{ccccccccccccccc}
\hline
$N_e$ &
$\norm{\epsilon_{B_1}}$ & EOC &
$\norm{\epsilon_{B_2}}$ & EOC &
$\norm{\epsilon_{B_3}}$ & EOC &
$\norm{\epsilon_{\rho_1}}$ & EOC &
$\norm{\epsilon_{\rho_1 v_{1,1}}}$ & EOC &
$\norm{\epsilon_{\rho_1 v_{1,2}}}$ & EOC &
$\norm{\epsilon_{\rho_1 v_{1,3}}}$ & EOC 
\\
\hline
$8$ & $2.30 \times 10^{-7}$ & $-$ & $2.30 \times 10^{-7}$ & $-$ & $1.10 \times 10^{-7}$ & $-$ & $4.18 \times 10^{-7}$ & $-$ & $9.62 \times 10^{-7}$ & $-$ & $9.58 \times 10^{-7}$ & $-$ & $6.85 \times 10^{-8}$ & $-$ \\
$16$ & $6.35 \times 10^{-9}$ & $5.18$ & $6.38 \times 10^{-9}$ & $5.17$ & $4.24 \times 10^{-9}$ & $4.70$ & $1.54 \times 10^{-8}$ & $4.76$ & $3.44 \times 10^{-8}$ & $4.80$ & $3.44 \times 10^{-8}$ & $4.80$ & $1.91 \times 10^{-9}$ & $5.17$ \\
$32$ & $2.04 \times 10^{-10}$ & $4.96$ & $2.04 \times 10^{-10}$ & $4.96$ & $1.27 \times 10^{-10}$ & $5.06$ & $5.54 \times 10^{-10}$ & $4.80$ & $1.21 \times 10^{-9}$ & $4.84$ & $1.20 \times 10^{-9}$ & $4.84$ & $6.57 \times 10^{-11}$ & $4.86$ \\
$64$ & $6.99 \times 10^{-12}$ & $4.87$ & $7.00 \times 10^{-12}$ & $4.87$ & $3.58 \times 10^{-12}$ & $5.15$ & $2.05 \times 10^{-11}$ & $4.76$ & $3.99 \times 10^{-11}$ & $4.92$ & $3.99 \times 10^{-11}$ & $4.92$ & $2.37 \times 10^{-12}$ & $4.79$ \\
mean & & $5.00$ & & $5.00$ & & $4.97$ & & $4.77$ & & $4.85$ & & $4.85$ & & $4.94$ \\
\hline
$N_e$ &
$\norm{\epsilon_{E_1}}$ & EOC &
$\norm{\epsilon_{\rho_2}}$ & EOC &
$\norm{\epsilon_{\rho_2 v_{2,1}}}$ & EOC &
$\norm{\epsilon_{\rho_2 v_{2,2}}}$ & EOC &
$\norm{\epsilon_{\rho_2 v_{2,3}}}$ & EOC &
$\norm{\epsilon_{E_2}}$ & EOC &
$\norm{\epsilon_{\psi}}$ & EOC 
\\
\hline
$8$ & $2.52 \times 10^{-6}$ & $-$ & $6.16 \times 10^{-7}$ & $-$ & $1.36 \times 10^{-6}$ & $-$ & $1.36 \times 10^{-6}$ & $-$ & $1.10 \times 10^{-7}$ & $-$ & $2.57 \times 10^{-6}$ & $-$ & $2.61 \times 10^{-7}$ & $-$ \\
$16$ & $7.49 \times 10^{-8}$ & $5.07$ & $2.74 \times 10^{-8}$ & $4.49$ & $4.98 \times 10^{-8}$ & $4.77$ & $4.98 \times 10^{-8}$ & $4.77$ & $3.10 \times 10^{-9}$ & $5.15$ & $8.03 \times 10^{-8}$ & $5.00$ & $1.07 \times 10^{-8}$ & $4.60$ \\
$32$ & $1.92 \times 10^{-9}$ & $5.28$ & $1.07 \times 10^{-9}$ & $4.69$ & $1.77 \times 10^{-9}$ & $4.81$ & $1.77 \times 10^{-9}$ & $4.81$ & $1.06 \times 10^{-10}$ & $4.88$ & $2.58 \times 10^{-9}$ & $4.96$ & $3.80 \times 10^{-10}$ & $4.82$ \\
$64$ & $5.11 \times 10^{-11}$ & $5.23$ & $3.64 \times 10^{-11}$ & $4.87$ & $5.88 \times 10^{-11}$ & $4.92$ & $5.87 \times 10^{-11}$ & $4.92$ & $3.64 \times 10^{-12}$ & $4.86$ & $8.24 \times 10^{-11}$ & $4.97$ & $1.24 \times 10^{-11}$ & $4.94$ \\
mean & & $5.19$ & & $4.68$ & & $4.83$ & & $4.83$ & & $4.96$ & & $4.98$ & & $4.79$ \\
\hline
\end{tabular}
}
\end{table}

\begin{table}[]
    \centering
    \caption{$L_2$ errors and EOCs for the convergence test with the \textbf{ES} (ii) and \textbf{EC+LLF} (iii) schemes for $N=5$ and different numbers of elements per direction $N_e$.}
    \label{tab:eoc_es_4}
    \resizebox{\columnwidth}{!}{
    \begin{tabular}{ccccccccccccccc}
\hline
$N_e$ &
$\norm{\epsilon_{B_1}}$ & EOC &
$\norm{\epsilon_{B_2}}$ & EOC &
$\norm{\epsilon_{B_3}}$ & EOC &
$\norm{\epsilon_{\rho_1}}$ & EOC &
$\norm{\epsilon_{\rho_1 v_{1,1}}}$ & EOC &
$\norm{\epsilon_{\rho_1 v_{1,2}}}$ & EOC &
$\norm{\epsilon_{\rho_1 v_{1,3}}}$ & EOC 
\\
\hline
$4$ & $9.34 \times 10^{-7}$ & $-$ & $9.36 \times 10^{-7}$ & $-$ & $1.91 \times 10^{-7}$ & $-$ & $9.89 \times 10^{-7}$ & $-$ & $1.84 \times 10^{-6}$ & $-$ & $1.83 \times 10^{-6}$ & $-$ & $2.41 \times 10^{-7}$ & $-$ \\
$8$ & $6.72 \times 10^{-9}$ & $7.12$ & $6.73 \times 10^{-9}$ & $7.12$ & $2.26 \times 10^{-9}$ & $6.40$ & $1.09 \times 10^{-8}$ & $6.51$ & $1.49 \times 10^{-8}$ & $6.95$ & $1.48 \times 10^{-8}$ & $6.95$ & $1.75 \times 10^{-9}$ & $7.11$ \\
$16$ & $9.07 \times 10^{-11}$ & $6.21$ & $9.08 \times 10^{-11}$ & $6.21$ & $3.34 \times 10^{-11}$ & $6.08$ & $1.51 \times 10^{-10}$ & $6.17$ & $2.02 \times 10^{-10}$ & $6.20$ & $2.02 \times 10^{-10}$ & $6.20$ & $1.76 \times 10^{-11}$ & $6.63$ \\
$32$ & $1.26 \times 10^{-12}$ & $6.17$ & $1.26 \times 10^{-12}$ & $6.17$ & $5.11 \times 10^{-13}$ & $6.03$ & $2.36 \times 10^{-12}$ & $6.00$ & $3.11 \times 10^{-12}$ & $6.02$ & $3.11 \times 10^{-12}$ & $6.02$ & $2.63 \times 10^{-13}$ & $6.06$ \\
mean & & $6.50$ & & $6.50$ & & $6.17$ & & $6.23$ & & $6.39$ & & $6.39$ & & $6.60$ \\
\hline
$N_e$ &
$\norm{\epsilon_{E_1}}$ & EOC &
$\norm{\epsilon_{\rho_2}}$ & EOC &
$\norm{\epsilon_{\rho_2 v_{2,1}}}$ & EOC &
$\norm{\epsilon_{\rho_2 v_{2,2}}}$ & EOC &
$\norm{\epsilon_{\rho_2 v_{2,3}}}$ & EOC &
$\norm{\epsilon_{E_2}}$ & EOC &
$\norm{\epsilon_{\psi}}$ & EOC 
\\
\hline
$4$ & $3.24 \times 10^{-6}$ & $-$ & $1.58 \times 10^{-6}$ & $-$ & $2.46 \times 10^{-6}$ & $-$ & $2.47 \times 10^{-6}$ & $-$ & $4.97 \times 10^{-7}$ & $-$ & $5.08 \times 10^{-6}$ & $-$ & $3.82 \times 10^{-7}$ & $-$ \\
$8$ & $5.01 \times 10^{-8}$ & $6.02$ & $1.36 \times 10^{-8}$ & $6.86$ & $2.81 \times 10^{-8}$ & $6.45$ & $2.81 \times 10^{-8}$ & $6.46$ & $2.52 \times 10^{-9}$ & $7.62$ & $7.53 \times 10^{-8}$ & $6.07$ & $3.63 \times 10^{-9}$ & $6.72$ \\
$16$ & $7.20 \times 10^{-10}$ & $6.12$ & $1.94 \times 10^{-10}$ & $6.13$ & $4.11 \times 10^{-10}$ & $6.09$ & $4.11 \times 10^{-10}$ & $6.10$ & $2.73 \times 10^{-11}$ & $6.53$ & $1.18 \times 10^{-9}$ & $6.00$ & $4.91 \times 10^{-11}$ & $6.21$ \\
$32$ & $1.10 \times 10^{-11}$ & $6.03$ & $2.95 \times 10^{-12}$ & $6.04$ & $6.26 \times 10^{-12}$ & $6.04$ & $6.26 \times 10^{-12}$ & $6.04$ & $3.41 \times 10^{-13}$ & $6.32$ & $1.84 \times 10^{-11}$ & $6.00$ & $7.58 \times 10^{-13}$ & $6.02$ \\
mean & & $6.06$ & & $6.34$ & & $6.19$ & & $6.20$ & & $6.82$ & & $6.02$ & & $6.32$ \\
\hline
\end{tabular}
}
\end{table}

\q{c29r2}{
\change{

To illustrate the effect of the grid size and polynomial degree on the divergence cleaning effectiveness of the GLM technique, we show the $L_2$ and $L_{\infty}$ norms of the magnetic field divergence error at $t=1$ for the different resolutions used in the convergence test and the ES solver in Tables \ref{tab:eoc_divb_n23} and \ref{tab:eoc_divb_n45}. The error decreases as the solution gets better resolved with an experimental order of convergence (EOC) of approximately $\mathcal{O}(N)$.

\begin{table}[h!]
    \caption{\change{Magnetic field divergence error at time $t=1$ for the different resolutions of the convergence test and polynomial degrees $N=2$ and $N=3$.}}
    \label{tab:eoc_divb_n23}
    \centering
    \begin{tabular}{ccccc|ccccc}
    \hline
    \multicolumn{5}{c|}{$N=2$} & \multicolumn{5}{c}{$N=3$} \\
    \hline
         $N_e$ & $\norm{{\Nabla \cdot \vec{B}}}$ & EOC & $\norm{{\Nabla \cdot \vec{B}}}_{\infty}$ & EOC &
         $N_e$ & $\norm{{\Nabla \cdot \vec{B}}}$ & EOC & $\norm{{\Nabla \cdot \vec{B}}}_{\infty}$ & EOC \\
    \hline
$16$ & $9.11E-04$ & $--$ & $2.29E-03$ & $--$ &       $8$ & $2.93E-04$ & $--$ & $1.48E-03$ & $--$ \\
$32$ & $2.35E-04$ & $1.96$ & $6.36E-04$ & $1.85$ & $16$ & $3.41E-05$ & $3.10$ & $1.67E-04$ & $3.14$ \\
$64$ & $6.07E-05$ & $1.95$ & $1.66E-04$ & $1.93$ & $32$ & $4.14E-06$ & $3.04$ & $2.01E-05$ & $3.06$ \\
$128$ & $1.53E-05$ & $1.99$ & $4.20E-05$ & $1.99$ & $64$ & $5.14E-07$ & $3.01$ & $2.49E-06$ & $3.01$ \\
    \hline
    \end{tabular}
    \caption{\change{Magnetic field divergence error at time $t=1$ for the different resolutions of the convergence test and polynomial degrees $N=4$ and $N=5$.}}
    \label{tab:eoc_divb_n45}
    \centering
    \begin{tabular}{ccccc|ccccc}
    \hline
    \multicolumn{5}{c|}{$N=4$} & \multicolumn{5}{c}{$N=5$} \\
    \hline
         $N_e$ & $\norm{{\Nabla \cdot \vec{B}}}$ & EOC & $\norm{{\Nabla \cdot \vec{B}}}_{\infty}$ & EOC &
         $N_e$ & $\norm{{\Nabla \cdot \vec{B}}}$ & EOC & $\norm{{\Nabla \cdot \vec{B}}}_{\infty}$ & EOC \\
    \hline
$8$ & $1.64E-05$ & $--$ & $9.33E-05$ & $--$ &     $4$ & $2.82E-05$ & $--$ & $2.25E-04$ & $--$ \\
$16$ & $1.01E-06$ & $4.02$ & $4.77E-06$ & $4.29$ & $8$ & $6.65E-07$ & $5.41$ & $5.46E-06$ & $5.36$ \\
$32$ & $6.54E-08$ & $3.95$ & $2.93E-07$ & $4.02$ & $16$ & $1.96E-08$ & $5.09$ & $1.52E-07$ & $5.17$ \\
$64$ & $4.17E-09$ & $3.97$ & $1.91E-08$ & $3.94$ & $32$ & $6.02E-10$ & $5.02$ & $4.62E-09$ & $5.04$ \\
    \hline
    \end{tabular}
\end{table}

}
}

\subsection{Entropy Conservation and Stability Tests} \label{sec:ec_es}

To test the entropy dissipation properties of the schemes of the paper, we simulate a weak two-species magneto-hydrodynamic blast wave.
The initial condition is a multi-species and magnetized version of the weak Euler blast wave of \citet{Hennemann2020}.
This modification of the weak blast wave initial condition was initially proposed by \citet{czernik2022entropy} in the conext of multi-component MHD simulations.
The extension to multi-ion GLM-MHD is given by
\begin{align}\label{eq:ec_es_condini}
    \rho_k (\vec{x}, t = 0) &= \frac{-2^{(k - 1)}} {1 - 2^{N_i}} \rho_0(\vec{x}), 
    &
    p_k (\vec{x}, t = 0) &= 
    \begin{cases}
        1 & \mathrm{if}~r>0.5, \\
        1.245 & \mathrm{otherwise},
    \end{cases}
    \nonumber\\
    v_{k,1}  (\vec{x}, t = 0) &= 
    \begin{cases}
        0 & \mathrm{if}~r>0.5, \\
        0.1882 \cos \phi & \mathrm{otherwise},
    \end{cases}
    &
    v_{k,2}  (\vec{x}, t = 0) &= 
    \begin{cases}
        0 & \mathrm{if}~r>0.5, \\
        0.1882 \sin \phi & \mathrm{otherwise},
    \end{cases} 
    \nonumber\\
    v_{k,3} (\vec{x}, t = 0) &= 0,&
    B_1 (\vec{x}, t = 0) &= 1,\nonumber\\
    B_2 (\vec{x}, t = 0) &= 1, &
    B_3 (\vec{x}, t = 0) &= 1,\nonumber\\
    \red{\psi(\vec{x}, t = 0)} &\red{=0,} &
\end{align}
with $k=1,2$, the polar angle $\phi \coloneqq \arctan(y/x)$, the radius $r \coloneqq \sqrt{x^2 + y^2}$, and the auxiliary variable
\begin{equation}
    \rho_0 (\vec{x}) =
    \begin{cases}
        1 & \mathrm{if}~r>0.5, \\
        1.1691 \sin \phi & \mathrm{otherwise}.
    \end{cases} 
\end{equation}

As in the last section, we use different heat capacity ratios for the two ion species, $\gamma_1 = 2$ and $\gamma_2 = 4$, and also different charge-to-mass rations, $r_1=2$ and $r_2=1$, to consider important multi-ion effects.

For this problem, we also use a non-trivial electron pressure of the form \eqref{eq:pe_alpha} with $\alpha = 0.2$ to test the entropy dissipation properties of the electron pressure averaging introduced in Section \ref{sec:fvec}.

\q{c31r3_1}{
\change{
We use a ``weak blast wave'' simulation, such that we can evaluate the entropy dissipation of our DG methods without employing a shock-capturing approach. 
As mentioned in Section \ref{sec:intro}, DG methods typically require a shock-capturing mechanism to introduce additional dissipation whean dealing with sharp discontinuities in the solution, such as shocks. 
Notably, the \textbf{EC} solver is particularly unstable in the presence of discontinuities, as it is designed to be nearly dissipation-free.
}

We solve the multi-ion GLM-MHD system with the initial condition given by \eqref{eq:ec_es_condini} in the domain $\Omega = [-2,2]^2$ with periodic boundary conditions, the final time $t_f = 0.4$, and the three different solvers that we studied in Section \ref{sec:convergence}.
We use a tessellation of $16 \times 16$ quadrilateral elements of degree $N=3$.
\change{Figure \ref{fig:ec_es_contours} shows the density contours of the two ion species at $t = t_f = 0.4$ for the \textbf{ES} solver and two different resolutions: (i) a coarse resolution of $26 \times 16$ elements, and (ii) a finer resolution with $128 \times 128$ elements. As expected, some oscillations are visible, since a shock-capturing method is not employed in this test in order to examine the entropy production properties of the DG scheme.
}

\begin{figure}
    \centering
    \begin{minipage}[t]{0.3\linewidth}
        \begin{subfigure}[t]{\linewidth}
            \includegraphics[trim=316 621 1450 37, clip,width=\textwidth]{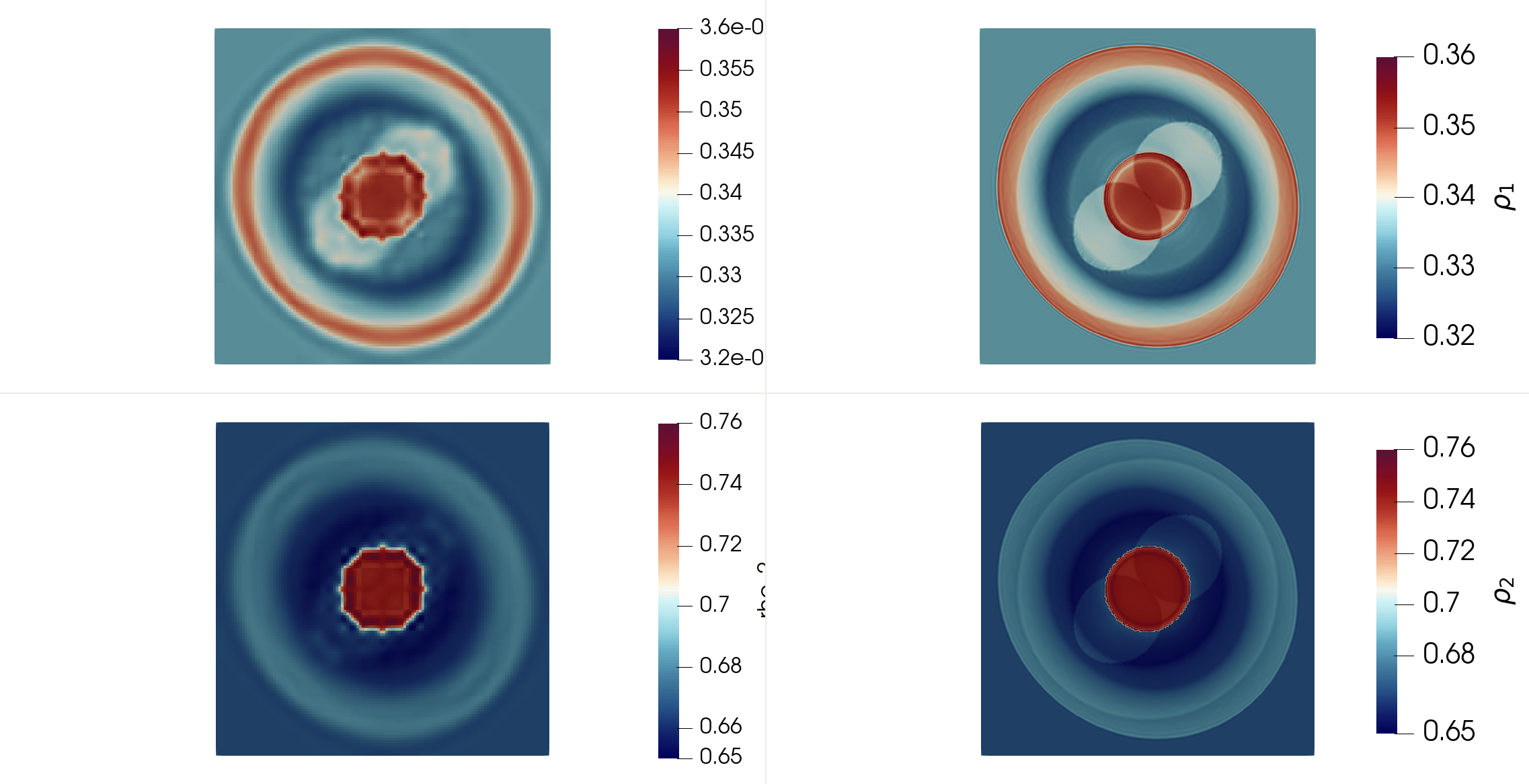}
    
            \includegraphics[trim=316 37 1450 621, clip,width=\textwidth]{figs/ec_es/ec_es.png}
            \caption{Mesh with $16 \times 16$ elements}
        \end{subfigure}
    \end{minipage}
    \begin{minipage}[t]{0.3\linewidth}
        \begin{subfigure}[t]{\linewidth}
            \includegraphics[trim=1450 621 316 37, clip,width=\textwidth]{figs/ec_es/ec_es.png}
    
            \includegraphics[trim=1450 37 316 621, clip,width=\textwidth]{figs/ec_es/ec_es.png}
            \caption{Mesh with $128 \times 128$ elements}
        \end{subfigure}
    \end{minipage}
    \begin{minipage}[t]{0.15\linewidth}
        \includegraphics[trim=2040 621 0 37, clip,height=2\textwidth]{figs/ec_es/ec_es.png}
        
        \includegraphics[trim=2040 37 0 621, clip,height=2\textwidth]{figs/ec_es/ec_es.png}
    \end{minipage}
    
    \caption{\change{Contours of $\rho_1$ (top) and $\rho_2$ (bottom) for the weak blast wave simulation at $t=0.4$ with the \textbf{ES} solver and polynomial degree $N=3$.
    We show results for the coarse resolution of the test, $16 \times 16$ elements (left), and a finer resolution, $128 \times 128$ elements (right).}}
    \label{fig:ec_es_contours}
\end{figure}
}

\change{To investigate the entropy dissipation properties of the various schemes}, we investigate the evolution of the total entropy of the domain,
\begin{equation}
    S_{\Omega}(t) = \int_{\Omega, N} S(\vec{x}, t) \d \vec{x},
\end{equation}
and the total entropy \change{production} rate,
\begin{equation}
    \dot{S}_{\Omega}(t) = 
    \int_{\Omega, N} \dot{S}(\vec{x}, t) \d \vec{x}
    =
    \int_{\Omega, N} \entVar^T \dot{\state{u}}(\vec{x}, t) \d \vec{x},
\end{equation}
where the sub-index $N$ on the integral denotes that we compute the integral using an LGL quadrature rule with $N+1$ points per direction in each element. 
I.e., the same quadrature rule that is used by the LGL-DGSEM method.

Figure \ref{fig:ECES_plots} shows the evolution of the total entropy change and the maximum entropy change rate as a function of the CFL number for the three different schemes studied in this section: (i) the entropy-conservative (\textbf{EC}) scheme, (ii) the provably entropy-stable scheme (\textbf{ES}), and (iii) the scheme that uses entropy-conservative fluxes in the volume integral together with the standard LLF scheme at the element interfaces (\textbf{EC+LLF}).

First, Figure \ref{fig:ec_es_total_entropy} illustrates the variation in total entropy, comparing the final and initial states across the domain for each simulation. 
The results indicate that all schemes lead to a net dissipation of entropy. 
As expected, the \textbf{EC+LLF} (ii) scheme and the \textbf{ES} scheme (iii) emerge as the most dissipative, succeeded by the \textbf{EC} scheme (i). 
Since the \textbf{EC} scheme is entropy conservative at the semi-discrete level, the observed net entropy dissipation in the \textbf{EC} scheme is attributed to the explicit time-integration method. 
Consequently, as the time-step size decreases, the net entropy dissipation of the scheme approaches zero with fourth-order accuracy, reflective of the RK4-5 method utilized.

\q{c31r3}{
\begin{figure}
\centering
\begin{subfigure}[b]{0.49\linewidth}
    \includegraphics[trim=0 0 0 0, clip,
    width=\linewidth]{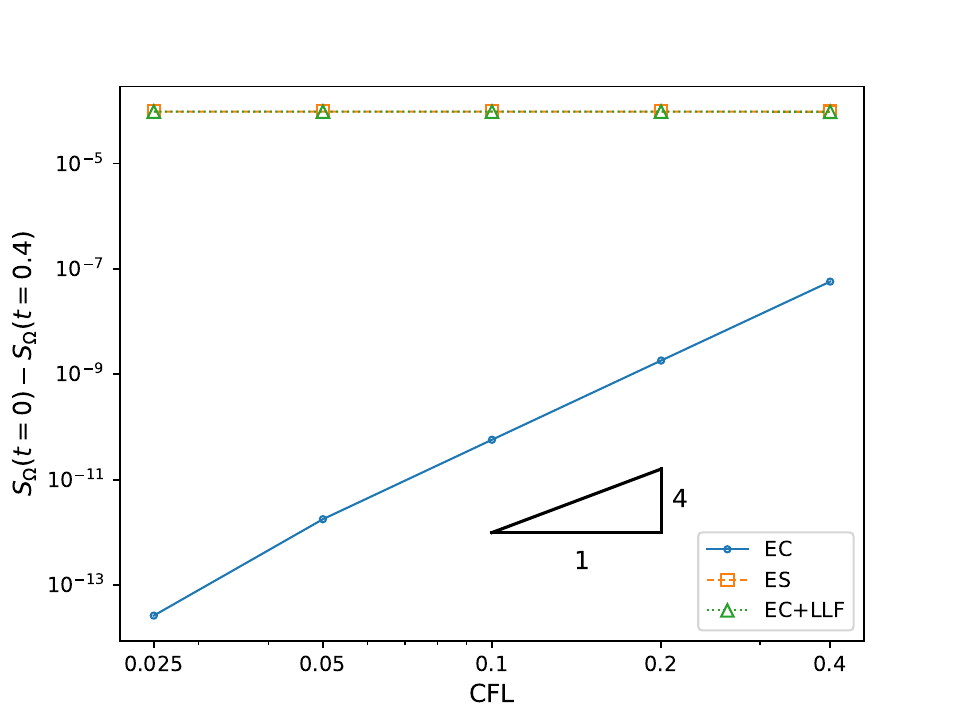}
    \caption{Total entropy change}
    \label{fig:ec_es_total_entropy}
\end{subfigure}
\begin{subfigure}[b]{0.49\linewidth}
    \includegraphics[trim=0 0 0 0, clip,
    width=\linewidth]{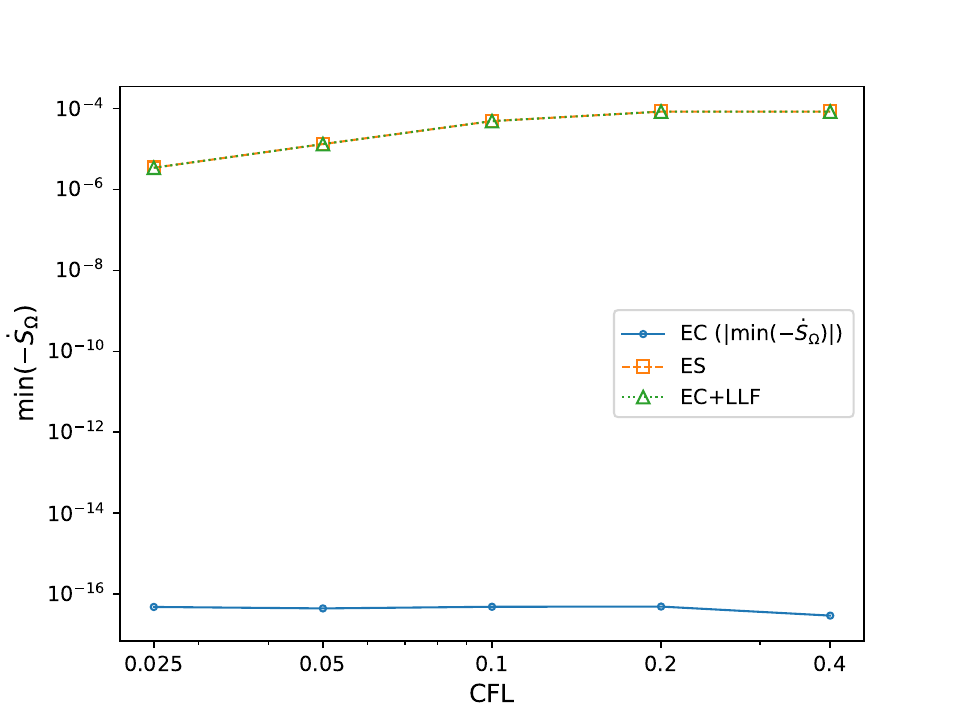}
    \caption{Maximum entropy change rate}
    \label{fig:ec_es_total_entropy_change}
\end{subfigure}
\caption{
Log-log plots of the total entropy change and the maximum entropy \change{production} rate \change{over all time steps of the simulation, $-\max_{t \in [t, t_f]} \dot{S}_{\Omega} (t) = \min_{t \in [t, t_f]} (-\dot{S}_{\Omega} (t))$,} as a function of the CFL number for three different schemes: ((i) the entropy-conservative (\textbf{EC}) scheme, (ii) the provably entropy-stable scheme (\textbf{ES}), and (iii) the scheme that uses entropy-conservative fluxes in the volume integral together with the standard LLF scheme at the element interfaces (LLF).
}
\label{fig:ECES_plots}
\end{figure}
}

Finally, Figure \ref{fig:ec_es_total_entropy_change} presents the maximum rate of total entropy \change{production} observed across all time steps for each simulation, offering a detailed view of the schemes' performance over time. This visualization confirms that both the \textbf{ES} scheme (ii) and the \textbf{EC+LLF} scheme (iii) consistently dissipate entropy throughout the simulation.
A noteworthy observation is that the maximum rate of semi-discrete entropy dissipation for these schemes decreases with the reduction of the time-step size. This trend highlights the interplay between spatial and temporal discretization in influencing the schemes' entropy characteristics. For clarity and to accommodate a logarithmic scale, we plot the absolute values of the maximum total entropy change rate for the entropy-conservative (\textbf{EC}) method (i), given that these values approach machine precision zero. 
This result shows that the \textbf{EC} scheme achieves ideal entropy conservation at the semi-discrete level, and it confirms that the net entropy dissipation illustrated in Figure \ref{fig:ec_es_total_entropy} is a consequence of the time-integration scheme.

The results of this section support the mathematical analysis presented in Section \ref{sec:discretization}.

\subsection{Magnetized Kelvin-Helmholtz Instability}\label{sec:khi}

We consider a multi-ion extension of the magnetized Kelvin-Helmholtz instability problem proposed by \citet{mignone2010high} to test the robustness of the scheme in turbulent under-resolved conditions.

We consider two ion species for this simulation: a hydrogen ion H$^+$ ($k=1$) and an ionized hydrogen molecule H$_2^+$ ($k=2)$.
The heat capacity ratios are $\gamma_1 = 5/3$ and $\gamma_2 = 1.4$ for the monoatomic and diatomic molecules, respectively.
Provided a suitable non-dimensionalization, the charge-to-mass ratios are $r_1 = 1$ and $r_2 = 1/2$.
We do not consider collisional source terms between the ion species and assume a trivial electron pressure gradient, $\Nabla p_e = 0$.

As in the single-species case, the initial condition is given by a shear layer with velocities in the $x$ direction, constant density and pressure, a small single-mode perturbation in the $y$-velocity, and a uniform magnetic field in the $xz$ plane.
This initial condition evolves into a Kelvin-Helmholtz instability and breaks down into MHD turbulence.
We set partial densities that sum up to unity, partial pressures that are computed with the individual heat capacity ratios, and the same velocities for the two ion species:
\begin{align}\label{eq:khi_condini}
    \rho_k(x,y,t=0) &= \frac{1}{2}, &p_k(x,y,t=0) &= \frac{1}{\gamma_k}, &\red{\psi(x,y,t=0)} &\red{= 0,} \nonumber\\
    v_{k,1} (x,y,t=0) &= \frac{1}{2} \tanh \left( \frac{y}{y_0} \right),
    &v_{k,2}(x,y,t=0) &= v_{2,0} \sin (2 \pi x) \exp \left(- \frac{y^2}{\sigma^2} \right), &
    v_{k,3}(x,y,t=0) &= 0, \nonumber \\
    B_1 (x,y,t=0) &= c_a \cos \theta,
    &B_2(x,y,t=0) &= 0, &B_3(x,y,t=0) &= c_a \sin \theta, 
\end{align}
with $k=1,2$, $y_0=1/20$ is the steepness of the shear, $c_a=0.1$ is the Alfvén speed, and $\theta = \pi/3$ is the angle of the initial magnetic field.
Moreover, the parameters of the perturbation are $v_{2,0}=0.01$ and $\sigma=0.1$.

We solve the multi-ion GLM-MHD system with the initial condition given by \eqref{eq:khi_condini} in the domain $\Omega = [-1,1]^2$ with final time $t_f = 20$, and the two dissipative solvers that we studied in Section \ref{sec:convergence}: (ii) the provably entropy-stable (\textbf{ES}) scheme, and (iii) the \textbf{EC+LLF} scheme.
The entropy-conserving (\textbf{EC}) scheme (i) does not provide enough dissipation to run this simulation stably.
Moreover, we also test a standard LGL-DGSEM discretization of the multi-ion GLM-MHD equations that uses the standard LLF solver at the element interfaces.
The standard LGL-DGSEM does not use a split formulation.
For all simulations, we use a tessellation of $128 \times 128$ quadrilateral elements of degree $N=3$, which corresponds to $512 \times 512$ degrees of freedom (DOFs).

We would like to remark that the computational domain of the original study by \citet{mignone2010high} spans $x \in [0,1]$, $y \in [-1,1]$, with periodic boundary conditions at the $x=0$ and $x=1$ boundaries, and perfectly conducting slip-wall boundary conditions at the $y=-1$ and $y=1$ boundaries.
However, we use a square-shaped domain to accommodate this case to the two-dimensional \texttt{TreeMesh} solver of \texttt{Trixi.jl}.
We use periodic boundary conditions at the $x=-1$ and $x=1$ boundaries, and perfectly conducting slip-wall boundary conditions at the $y=-1$ and $y=1$.

\newcommand{\supout}{\mathrm{out}}
\newcommand{\supin}{\mathrm{in}}
We impose the perfectly conducting slip-wall boundary condition weakly by evaluating the surface numerical flux and non-conservative terms with an outer state that mirrors the normal velocity and magnetic fields,
\begin{align}
    \state{u}^{\supout} (\state{u}^{\supin}, \vec{n})
    &=
    \left(
    \rho^{\supout}_k,
    \rho^{\supout}_k \vec{v}^{\supout}_k,
    E^{\supout}_k,
    \vec{B}^{\supout},
    \red{\psi^{\supout}}
    \right)^T
    \nonumber\\
    &=
    \left(
    \rho^{\supin}_k,
    \rho^{\supin}_k \left(\vec{v}^{\supin}_k - 2 \vec{v}^{\supin}_k \cdot \vec{n}\right),
    E^{\supin}_k,
    \vec{B}^{\supin} - 2 \vec{B}^{\supin} \cdot \vec{n},
    \red{\psi^{\supin}}
    \right)^T,
\end{align}
where $\vec{n}$ is the outward-pointing unit normal vector at the boundary of the domain.
This simple way of prescribing the slip-wall boundary performs well in this particular example, as the main flow features develop away from the wall.
In more challenging setups, it might be necessary to use a more robust slip-wall boundary condition.
For instance, entropy-stable wall boundary conditions have been developed in \cite{hindenlang2020stability,chan2022entropy1}.

Figure~\ref{fig:KHI_contours_es} shows the evolution of the magnetized Kelvin-Helmholtz instability problem for the fourth-order ($N=3$) LGL-DGSEM that uses the provably entropy-stable (\textbf{ES}) solver (ii).
We show the densities of species H$^+$ and H$_2^+$, and the ratio of the poloidal field, 
\begin{equation}
    B_p := \sqrt{B_1^2 + B_2^2},
\end{equation}
to the toroidal field, $B_t := B_3$.
Even though we run the simulations on a square domain, $\Omega = [-1,1]^2$, we plot the quantities only for $x \in [0,1]$ to match the visualization presented in previous studies \cite{mignone2010high,rueda2023entropy}, as the solution is periodic.
At early stages of the simulation ($t \le 5$), the perturbation follows a linear growth phase, in which the cat’s eye vortex structure forms, see e.g. the top row of Figure~\ref{fig:KHI_contours_es} and \cite{mignone2010high}.
As the simulation continues, magnetic field lines become distorted and energy is transferred to smaller scales in the onset of MHD turbulence.
Energy is then dissipated by the artificial viscosity and resistivity of the methods.

Figure~\ref{fig:KHI_contours_llf} shows the evolution of the magnetized Kelvin-Helmholtz instability problem for the fourth-order ($N=3$) \textbf{EC+LLF} solver (iii): the LGL-DGSEM solver that uses the entropy-conserving flux in the volume integral and standard LLF fluxes at the element interfaces.
Again, we show the densities of species H$^+$ and H$_2^+$, and the ratio of the poloidal to the toroidal magnetic field.
The evolution of the two species' densities and the magnetic field is almost identical as with the \textbf{ES} (ii) solver up to time $t=8$ (second panel).
However, as turbulence evolves for later times of the simulation, the contours of Figure~\ref{fig:KHI_contours_llf} differ from those in Figure~\ref{fig:KHI_contours_es}.
At those stages of the simulation, the evolution of the state quantities is determined by the slightly different (numerical) dissipation properties of the schemes (ii) and (iii).

\newcommand{\tablerow}[2]{
 \includegraphics[trim=557 152 1508 359,clip,height=0.28\linewidth]{figs/khi/#2_t_#1.png}
 &
 \includegraphics[trim=0 164 1715 164,clip,height=0.28\linewidth]{figs/khi/#2_t_#1.png}
 &
 \includegraphics[trim=760 164 960 164,clip,height=0.28\linewidth]{figs/khi/#2_t_#1.png}
 &
 \includegraphics[trim=1515 164 213 164,clip,height=0.28\linewidth]{figs/khi/#2_t_#1.png}
 &
 \includegraphics[trim=2065 152 0 359,clip,height=0.28\linewidth]{figs/khi/#2_t_#1.png}
 \\
}

\begin{figure}[h!]
\centering

\begin{tabular}{cccccc}
  & $\mathrm{H}^+$ & $\mathrm{H}^+_2$ & & \\
    \tablerow{05}{multiion_es_khi}
    \tablerow{08}{multiion_es_khi}
    \tablerow{12}{multiion_es_khi}
    \tablerow{20}{multiion_es_khi}
\end{tabular}
\caption{Evolution of the magnetized Kelvin-Helmholtz instability problem using a fourth-order \textbf{ES} discretization (ii) of the multi-ion MHD model at times $t=5$ (top panel), $t=8$ (second panel), $t=12$ (third panel) and $t=20$ (bottom panel) with the highest resolution ($256 \times 512$ DOFs).
We show the densities of the ion species, $H^+$ and $H^+_2$, and the ratio of the poloidal field, $B_p$, to the toroidal field, $B_t := B_3$. 
}
\label{fig:KHI_contours_es}
\end{figure}

\begin{figure}[h!]
\centering

\begin{tabular}{cccccc}
  & $\mathrm{H}^+$ & $\mathrm{H}^+_2$ & & \\
    \tablerow{05}{multiion_khi}
    \tablerow{08}{multiion_khi}
    \tablerow{12}{multiion_khi}
    \tablerow{20}{multiion_khi}
\end{tabular}
\caption{Evolution of the magnetized Kelvin-Helmholtz instability problem using a fourth-order \textbf{EC+LLF} (iii) discretization of the multi-ion MHD model at times $t=5$ (top panel), $t=8$ (second panel), $t=12$ (third panel) and $t=20$ (bottom panel) with the highest resolution ($256 \times 512$ DOFs).
We show the densities of the ion species, $H^+$ and $H^+_2$, and the ratio of the poloidal field, $B_p$, to the toroidal field, $B_t := B_3$. 
}
\label{fig:KHI_contours_llf}
\end{figure}

Figure~\ref{fig:KHI_plots} shows the evolution of the $L_2$ norm of the normalized poloidal magnetic energy, defined as
\begin{equation}
    \langle B_p^2 \rangle(t) := \frac{\int_{\Omega} B_p^2(t) \d \vec{x}}{\int_{\Omega} B_p^2(t=0) \d \vec{x}},
\end{equation} 
and the $L_2$ norm of the divergence error.
We show the solution obtained with five different methods/schemes (we use a numbering that is consistent with the numbering introduced in Section \ref{sec:convergence}):
\begin{itemize}
    \item[(ii)] \textbf{ES:} The provably entropy-stable split-form LGL-DGSEM discretization of the multi-ion GLM-MHD system (solid blue line).
    \item[(iii)] \textbf{EC+LLF:} The dissipative split-form LGL-DGSEM discretization of the multi-ion GLM-MHD system that uses the EC flux in the volume integral and the standard LLF flux at the element interfaces (dotted black line).
    \item[(iv)] \textbf{ES (no GLM):} The provably entropy-stable split-form LGL-DGSEM discretization (ii) without GLM divergence cleaning (dash-dot-dotted magenta line).
    \item[(v)] \textbf{EC+LLF (no GLM):} The dissipative split-form LGL-DGSEM discretization (iii) without GLM divergence cleaning  (dashed green line).
    \item[(vi)] \textbf{Std DG:} The standard LGL-DGSEM discretization of the multi-ion GLM-MHD system that does not use a split formulation and uses a standard LLF solver at the element interfaces (dash-dotted red line).
\end{itemize}

Only the solvers (ii) and (iii) are able to run the simulation until the end time, which highlights the importance of using the GLM divergence cleaning technique and an entropy-dissipative discretization.
In accordance with the observations made for Figures~\ref{fig:KHI_contours_es} and \ref{fig:KHI_contours_llf}, the evolution of the normalized poloidal magnetic energy and the divergence error is almost identical for the schemes (ii) and (iii) until time $t=9$.
After that, the different dissipation properties of schemes (ii) and (iii) lead to slightly different results in the onset of MHD turbulence.

Figure~\ref{fig:KHI_plots_divB} shows that schemes (iv) and (v) crash shortly after the formation of the cat's eye vortex structure, as the divergence error grows uncontrollably due to the absence of the GLM divergence cleaning technique.
The growth in the divergence error results in a non-physical evolution of the poloidal magnetic energy, as observed in Figure~\ref{fig:KHI_plots_Bp}, and a flow in which a negative density/pressure causes the scheme to crash. 
Remarkably, the \textbf{ES} scheme without GLM (iv) manages to run for a little longer than the \textbf{EC+LLF} scheme (v).

It is noteworthy that the standard LGL-DGSEM discretization of the GLM-MHD system (vi) crashes when turbulence starts to develop in the domain.
In other words, the GLM divergence cleaning technique is not enough to improve the robustness of the scheme.
It is the combination of divergence cleaning and entropy stability that equips the LGL-DGSEM with enough robustness to run this example until the end time.

\begin{figure}
    \centering
    \begin{subfigure}[b]{0.48\linewidth}
        \includegraphics[trim=0 0 0 0, clip,
        width=\linewidth]{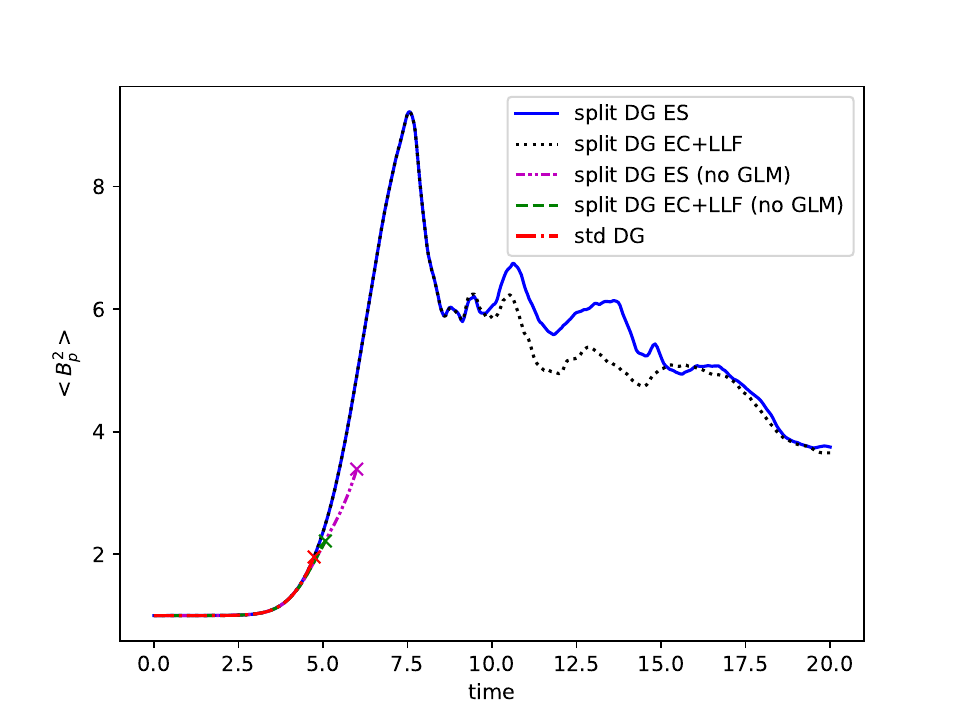}
        \caption{Normalized poloidal magnetic energy}
        \label{fig:KHI_plots_Bp}
    \end{subfigure}
    \begin{subfigure}[b]{0.48\linewidth}
        \includegraphics[trim=0 0 0 0, clip,
        width=\linewidth]{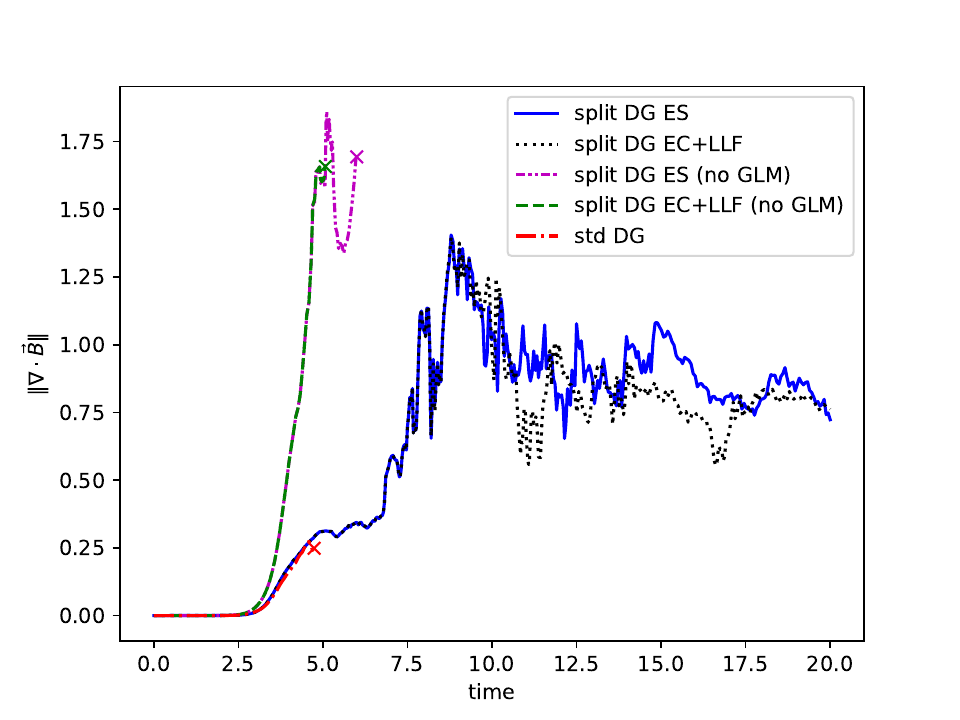}
        \caption{Divergence error}
        \label{fig:KHI_plots_divB}
    \end{subfigure}
    \caption{Evolution of the normalized volume-integrated poloidal magnetic energy $\langle B_p^2 \rangle$ and the $L_2$ norm of the divergence error $\norm{\Nabla \cdot \vec{B}}$ for the magnetized KHI problem with two species (H$^+$ and H$_2^+$) and five different numerical schemes.
    An ``x'' marks the crash of a simulation.}
    \label{fig:KHI_plots}
\end{figure}

\q{c33r3}{
\change{
To facilitate a better comparison between Figures \ref{fig:KHI_contours_es} and \ref{fig:KHI_contours_llf}, and to visualize the effects of entropy stability on the solution, Figure \ref{fig:khi_slices} shows the density of the first ion species, $\rho_1$, along a slice of the simulation domain at $y=0$ for the solvers with GLM divergence cleaning at various simulation times. 

We selected the same simulation times as shown in Figures \ref{fig:KHI_contours_es} and \ref{fig:KHI_contours_llf}. However, instead of showing results at $t=5$, we present results at $t=4.5$ in Figure \ref{fig:khi_slice_t_045}, just before the \textbf{Std DG} solver crashes. At $t=4.5$, the \textbf{ES} and \textbf{EC+LLF} methods produce similar results, but the \textbf{Std DG} solver exhibits strong density oscillations near the periodic boundaries, ultimately leading to its failure.

Figure \ref{fig:khi_slice_t_08} shows that both the \textbf{ES} and \textbf{EC+LLF} methods predict similar density values up to time $t=8$. However, as turbulence develops, the density predictions of the two methods begin to diverge significantly, as observed in Figures \ref{fig:khi_slice_t_12} and \ref{fig:khi_slice_t_20}. This behavior was also noted in Figures \ref{fig:KHI_contours_es} and \ref{fig:KHI_contours_llf}.

\begin{figure}
    \centering
    \begin{subfigure}[b]{0.4\linewidth}
        \includegraphics[width=\textwidth]{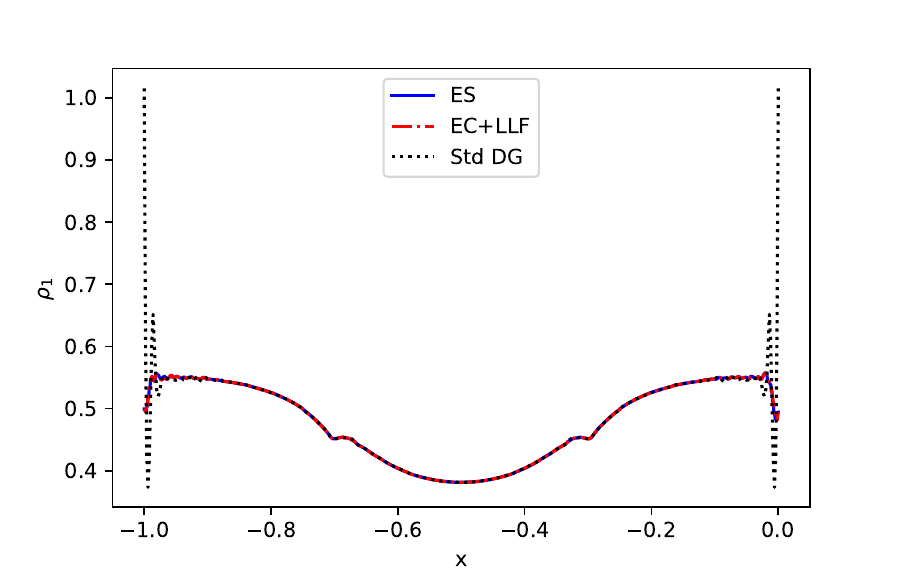}
        \caption{$t=4.5$}
        \label{fig:khi_slice_t_045}
    \end{subfigure}
    \begin{subfigure}[b]{0.4\linewidth}
        \includegraphics[width=\textwidth]{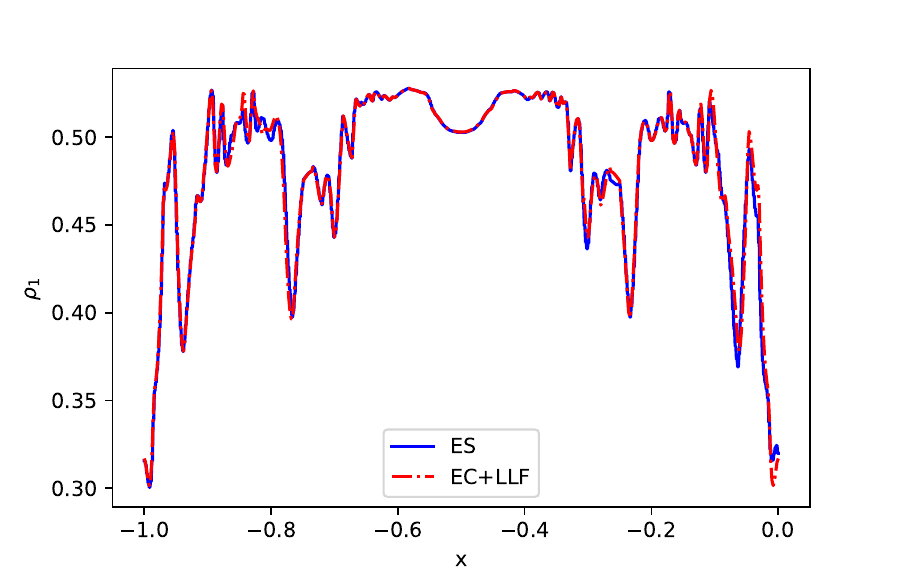}
        \caption{$t=8$}
        \label{fig:khi_slice_t_08}
    \end{subfigure}

    \begin{subfigure}[b]{0.4\linewidth}
        \includegraphics[width=\textwidth]{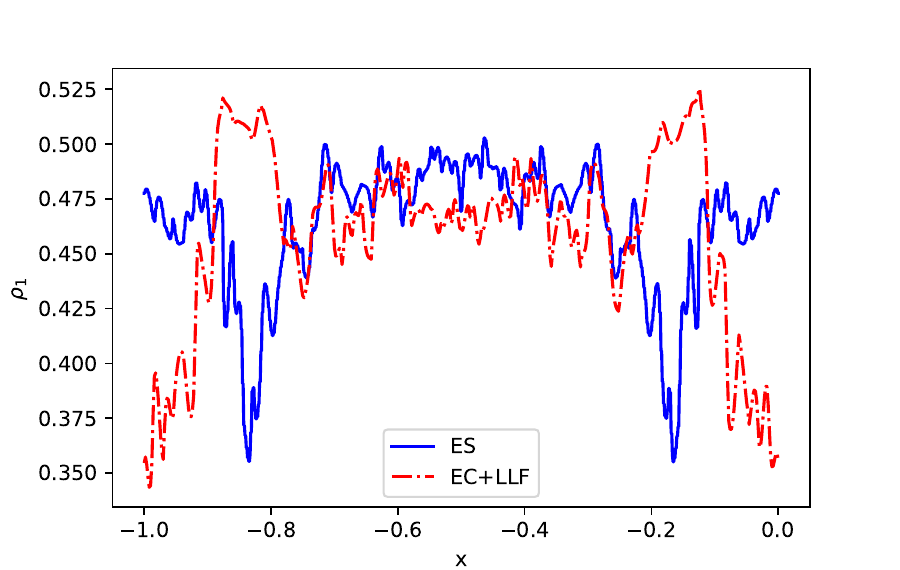}
        \caption{$t=12$}
        \label{fig:khi_slice_t_12}
    \end{subfigure}
    \begin{subfigure}[b]{0.4\linewidth}
        \includegraphics[width=\textwidth]{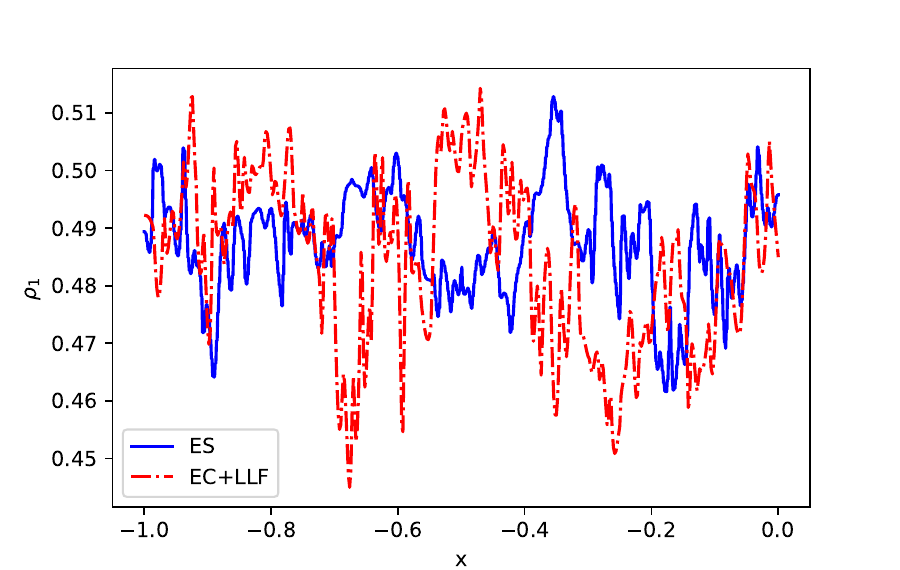}
        \caption{$t=20$}
        \label{fig:khi_slice_t_20}
    \end{subfigure}
    \caption{\change{Density of the first ion species, $\rho_1$, along a slice of the simulation domain at $y=0$ for the solvers with GLM divergence cleaning and different simulation times.}}
    \label{fig:khi_slices}
\end{figure}
}
}

\q{c9r2}{
\change{
\subsection{Performance Comparison}

To compare the computational performance of the various methods used in this paper, we revisit the weak blast simulation from Section \ref{sec:ec_es}. We run the simulation using the same end time, $t_f = 0.4$, and the same resolution, applying the four different discontinuous Galerkin discretization methods for the multi-ion GLM-MHD equations (the numbering of the methods remains consistent with the previous sections):
\begin{itemize}
    \item[(i)] \textbf{EC:} The provably entropy-conservative split-form LGL-DGSEM discretization of the multi-ion GLM-MHD system.
    \item[(ii)] \textbf{ES:} The provably entropy-stable split-form LGL-DGSEM discretization of the multi-ion GLM-MHD system.
    \item[(iii)] \textbf{EC+LLF:} The dissipative split-form LGL-DGSEM discretization of the multi-ion GLM-MHD system that uses the EC flux in the volume integral and the standard LLF flux at the element interfaces.
    \item[(vi)] \textbf{Std DG:} The standard LGL-DGSEM discretization of the multi-ion GLM-MHD system that uses the ``standard'' volume numerical flux and non-conservative terms \eqref{eq:std_vol_fluxes} and a standard LLF solver at the element interfaces.
\end{itemize}
Note that we have excluded the solver variants that do not employ the GLM divergence-cleaning technique, as our implementation always includes the additional variable \red{$\psi$} (in the previous section, the GLM technique was deactivated by setting \red{$c_h = 0$}). 
In the following, we present performance indices scaled with the number of variables in the system, allowing for an estimation of the computational cost associated with the GLM technique.

Table \ref{tab:pid} presents the performance indices and the number of time steps for the various methods used in this paper to solve the weak blast wave test with CFL$=0.4$. These measurements were taken on a single core of an AMD Ryzen Threadripper 3990X.
The performance index is calculated as the average computational time to perform a right-hand-side evaluation, divided by the number of spatial degrees of freedom in the mesh. The number of samples used to compute the performance index is the product of the number of time steps and the number of Runge-Kutta stages per time step (five in this case).
Additionally, we present the performance index scaled by the number of state variables for this two-species simulation, which is $n_v = 5 N_i + 4 = 14$. A detailed description of the performance index and the performance optimizations in the \texttt{Trixi.jl} package can be found in \cite{ranocha2023efficient}.

As expected, the computationally cheapest scheme is \textbf{Std DG}, as it only requires the computation of standard averages for the two-point fluxes and non-conservative terms. However, this variant is not robust enough to handle under-resolved turbulence, as demonstrated in Section \ref{sec:khi}.
Following in computational complexity is the \textbf{EC+LLF} scheme, which requires the computation of EC fluxes and non-conservative terms in the volume integral but only standard averages in the surface integral. The \textbf{EC} scheme is slightly more expensive, as it necessitates the computation of EC fluxes and non-conservative terms in both the volume and surface integrals.
The computationally most expensive scheme is the provably entropy-stable scheme (\textbf{ES}). This scheme requires the computation of the jump in entropy variables and their product with the discrete entropy Jacobian $\hat{\mathcal{H}}$ for the surface integral (see, e.g., \eqref{eq:esflux}), in addition to the EC fluxes and non-conservative terms in the volume and surface integrals. Since the discrete entropy Jacobian $\hat{\mathcal{H}}$ is a sparse matrix, in our implementation, we do not assemble the entire matrix but only compute the non-zero entries and perform the matrix-vector product with those non-zero entries.

\begin{table}[h]
    \caption{\change{Serial performance indices and number of time steps for the different methods of the paper to solve the weak blast wave test with CFL$=0.4$. The number of variables for this two-species simulation is $n_v = 5 N_i + 4 = 14$.}}
    \label{tab:pid}
    \centering
    \begin{tabular}{c|cccc}
        \hline
        Method & \textbf{EC} & \textbf{ES} & \textbf{EC+LLF} & \textbf{Std DG}   \\
        \hline
        time/DOF/rhs [s] & 
        $9.2104 \times 10^{-7}$ &
        $1.0517 \times 10^{-6}$ &
        $9.0783 \times 10^{-7}$ &
        $6.5737 \times 10^{-7}$ \\
        time/DOF/$n_v$/rhs [s] & 
        $6.5788 \times 10^{-8}$ &
        $7.5122 \times 10^{-8}$ & 
        $6.4845 \times 10^{-8}$ &
        $4.6955 \times 10^{-8}$ \\
        time steps & 
        $128$ &
        $126$ & 
        $126$ &
        $126$\\
        \hline
    \end{tabular}
    
\end{table}
}
}

\section{Conclusions \change{and Outlook}} \label{sec:Conclusions}

This paper makes three significant contributions to the numerical analysis of the ideal multi-ion magneto-hydrodynamics (MHD) equations. 
Our first contribution involves enhancing the equation system with a divergence cleaning mechanism, utilizing the Generalized Lagrange Multiplier (GLM) technique.
This addition ensures a more robust computational model by mitigating numerical divergence issues inherent in MHD simulations.

Our second contribution introduces an innovative algebraic manipulation of the system. 
This adjustment enables the identification of a strictly convex entropy function that aligns with the thermodynamic entropies of individual ion species.
Consequently, our model adheres to the second law of thermodynamics at a continuous level, ensuring physical accuracy in simulations.

The third and final contribution of our work lies in the development of both finite volume (FV) and high-order discontinuous Galerkin (DG) entropy-consistent discretizations for the GLM-MHD system. 
These discretizations are notable for their entropy conservation or stability, aligning with the second law of thermodynamics at the semi-discrete level across any number of ion species. Moreover, they are consistent with established discretization schemes for single-fluid MHD equations when reduced to a single ion species scenario. 

Through a series of numerical experiments, we validated the entropy-consistency, high-order accuracy, and improved robustness of our methods. These tests confirm the practical effectiveness and theoretical soundness of our contributions to the field of computational MHD.

\q{c4_r1}{
\change{
While we specifically address the one-dimensional case in the derivation of the methods in this paper, extending the methods to 3D, curvilinear and unstructured grids using tensor-product hexahedral elements  is a relatively straightforward process, as demonstrated in previous works by the authors, such as \cite{Bohm2018, Rueda-Ramirez2021, rueda2023entropy}.
Extensions of this class of entropy-stable discontinuous Galerkin methods to unstructured grids made up of triangles and hexahedra have been proposed in the literature.
For instance, \citet{chen2017entropy, crean2018entropy, chan2018discretely} presented high-order entropy-stable methods for triangles and tetrahedra. These methods lack a tensor-product structure, making them computationally more expensive than traditional DGSEM methods on hexahedra.
More recently,
efficient extensions of entropy-stable discontinuous Galerkin methods to unstructured grids using tensor-product summation-by-parts
operators on triangles and tetrahedra have been proposed by \citet{montoya2023efficient,montoya2024efficient}.
These discretizations for triangles and tetrahedra rely on two-point numerical fluxes, such as the ones developed in the present paper.
}
}

\q{c30_r3}{
\change{
In this paper, we have focused on the high-order convergence, entropy-dissipation properties and robustness for under-resolved turbulence simulations, and did not discuss shock-capturing strategies. 
As is typical for high-order DG schemes, the high-order DG schemes derived in this paper require a shock-capturing mechanism to introduce local additional dissipation when dealing with sharp discontinuities in the solution, such as shocks.
The structure of the DG schemes developed in this paper is suitable to use the entropy-consistent shock-capturing scheme for non-conservative systems developed by the authors in \cite{Rueda-Ramirez2020}, or the node-local subcell shock-capturing techniques developed by the authors in \cite{rueda2024flux}.
}
}

\q{c4_r1_2}{
\change{
Future work will involve developing numerical schemes that conserve total energy under less restrictive conditions, incorporating a self-consistent treatment of electron energy while maintaining entropy consistency, achieving symmetrization, and applying these schemes to challenging problems in space physics and astrophysics. These applications will include scenarios involving shocks, under-resolved MHD turbulence, 3D curvilinear and unstructured grids with mixed elements, and distributed parallel architectures.
}
}

\q{c3r1_2}{
The methods detailed in this paper are implemented for the two-dimensional Cartesian \texttt{TreeMesh} solver of the open-source framework \texttt{Trixi.jl} \cite{schlottkelakemper2020trixi, ranocha2022adaptive, schlottke2021purely}.
A reproducibility repository can be found on \url{https://github.com/amrueda/paper_2024_multi-ion_mhd}, \change{where we link the code with the 1D and 2D implementation of the methods}.
}

\section*{Acknowledgments}

The authors would like to thank Prof. Joachim Saur and Stephan Schlegel for the very insightful conversations about the multi-ion MHD system.
We would also like to thank Dr. Michael Schlottle-Lakemper for his helpful comments during the preparation of this paper.
\change{The authors would also like to thank the anonymous reviewers for the useful comments during the peer-review process.
}

Gregor Gassner and Andrés M. Rueda-Ramírez acknowledge funding through the Klaus-Tschira Stiftung via the project ``HiFiLab''.

Gregor Gassner and Aleksey Sikstel acknowledge funding from the German Science Foundation DFG through the research unit “SNuBIC” (DFG-FOR5409).

We furthermore thank the Regional Computing Center of the University of Cologne (RRZK) for providing computing time on the High Performance Computing (HPC) system ODIN as well as support.

\printcredits

\bibliographystyle{model1-num-names}

\bibliography{Biblio.bib}

\section*{Appendices}
\appendix

\section{Derivation of the Entropy-Conservative Fluxes and Non-Conservative Terms} \label{app:ec_derivation}

\q{34r4}{
As reviewed in Section \ref{sec:fvec}, numerical fluxes and non-conservative terms provide entropy conservation if they satisfy a generalization of Tadmor's shuffle condition \eqref{eq:entropy_prod_ec},
\begin{equation} \label{eq:entropy_prod_ec2}
    r_{(i,j)} =
    r (\state{u}_i,\state{u}_j) = 
    \jump{\entVar}_{(i,j)}^T 
    \numfluxb{f}_{(i,j)}^{\ec}
    + \entVar^T_{j} {\Jan}^{\diamond, \ec}_{(j,i)}
    - \entVar^T_{i} {\Jan}^{\diamond, \ec}_{(i,j)}
    - \jump{\Psi}_{(i,j)} = 0,
\end{equation}
where $r_{(i,j)}$ is the entropy production between degrees of freedom $i$ and $j$, and $\Psi$ is the so-called entropy (flux) potential, defined as \eqref{eq:entropy_potential}
\begin{equation}\label{eq:entropy_potential2}
\Psi = \entVar^T \left(\state{f} + \Jan \right) - {f}^S 
=
\underbrace{\entVar^T \state{f}^{\supEuler} - {f}^S}_{\coloneqq \Psi^{\supEuler}}
+
\underbrace{\entVar^T \left(\state{f}^{\supMHD} + \Jan^{\supMHD} \right)}_{\coloneqq \Psi^{\supMHD}}
+ {\color{red}\underbrace{\entVar^T\left(
\state{f}^{\supGLM} + \Jan^{\supGLM}
\right)}_{\coloneqq \Psi^{\supGLM}}}.
\end{equation}

Our approach to obtaining entropy-conservative two-point numerical fluxes and non-conservative terms involves breaking down the entropy-conserving flux into Euler, MHD and GLM components, 
\begin{equation}\label{eq:fluxes_app}
    \numfluxb{f}^{\ec}_{(i,j)} = \numfluxb{f}^{\ec,\supEuler}_{(i,j)}
    + \numfluxb{f}^{\ec, \supMHD}_{(i,j)}
    + {\color{red}  \numfluxb{f}^{\ec, \supGLM}_{(i,j)}},
\end{equation}
and the entropy-conserving two-point non-conservative terms into Godunov-Powell, Lorentz, multi-ion, and GLM components,
\begin{equation}\label{eq:noncons_app}
    {\Jan}^{\diamond, \ec}_{(i,j)} = 
    {\Jan}^{\Powell\diamond, \ec}_{(i,j)} +
    {\Jan}^{\rm{Lor}\diamond, \ec}_{(i,j)} +
    {\Jan}^{\rm{multi}\diamond, \ec}_{(i,j)} +
    {\color{red}{\Jan}^{\rm{GLM}\diamond, \ec}_{(i,j)}}.
\end{equation}

\change{
By substituting \eqref{eq:fluxes_app} and \eqref{eq:noncons_app} into \eqref{eq:entropy_prod_ec2}, we derive a scalar equation with several unknown terms. To simplify the computation of the various entropy-conserving fluxes and non-conservative terms, we analyze each set of terms individually.
Specifically those related to the Euler, mMHD, and GLM components.
We ensure that the entropy production from each of these components is zero. This approach allows us to develop a discretization that results in zero total entropy production.
}
}
As discussed above, there are several solutions to \eqref{eq:entropy_prod_ec2}, and we will focus on finding one consistent with the single-fluid GLM-MHD discretization of \citet{Derigs2018}.
The GLM terms are important to control the magnetic field divergence error and provide robustness to the schemes. 
However, all derivations of this paper are valid with or without GLM terms. 
Therefore, we mark all GLM terms in \red{red} to improve the readability.

We start by obtaining an entropy-conservative flux for the Euler terms.
Since the Euler terms of each ion species of the multi-ion GLM-MHD system are completely decoupled from each other and correspond to individual copies of the Euler equations, it is straightforward to extend any existing entropy-conservative flux to these terms, e.g. \cite{chandrashekar2013kinetic, Ismail2009, Ranocha2020}.
In order to keep consistency with the EC flux of \citet{Derigs2018} for single-fluid MHD, we choose the entropy-conserving and kinetic-energy preserving flux of \citet{chandrashekar2013kinetic}.
The multi-ion extension of this flux reads as
\begin{equation}\label{eq:ECFlux_Euler}
\state{f}^{\ec, \supEuler}(\state{u}_L,\state{u}_R) =
\begin{pmatrix} 
\rho^{\ln}_{k} \avg{v_{k,1}} \\
\rho^{\ln}_{k} \avg{v_{k,1}}^2 + \overline{p}_k \\ 
\rho^{\ln}_{k} \avg{v_{k,1}} \avg{v_{k,2}} \\
\rho^{\ln}_{k} \avg{v_{k,1}} \avg{v_{k,3}}  \\
f^{\ec, \supEuler}_{E_k} \\
\vec{0} \\
0
\end{pmatrix}
\end{equation}
with 
\begin{equation}\label{eq:ECFlux2}
\begin{split}
f^{\ec, \supEuler}_{E_k} = & f_{\rho_k}^{\ec,\supEuler}\bigg[\frac{1}{2 (\gamma_k-1) \beta^{\ln}_{k}} - \frac{1}{2} \left(\avg{v_{k,1}^2} + \avg{v_{k,2}^2} + \avg{v_{k,3}^2}\right) \bigg] \\
&+ f_{\rho_k v_{k,1}}^{\ec,\supEuler} \avg{v_{k,1}} 
+ f_{\rho_k v_{k,2}}^{\ec,\supEuler} \avg{v_{k,2}} 
+ f_{\rho_k v_{k,3}}^{\ec,\supEuler} \avg{v_{k,3}}
\end{split}
\end{equation}
and
\begin{equation*}
\overline{p}_k = \frac{\avg{\rho_k}}{2\avg{\beta_k}}.
\end{equation*}

Following the derivations of \citet{chandrashekar2013kinetic}, it is possible to show that the flux \eqref{eq:ECFlux_Euler} fulfills a conservative Tadmor shuffle condition in the multi-species sense, such that
\begin{equation}\label{eq:TadmorEuler}
    \jump{\entVar}_{(i,j)}^T 
\numfluxb{f}_{(i,j)}^{\ec, \supEuler}
=
\jump{\Psi^{\supEuler}}_{(i,j)}
\end{equation}
holds.
Hence, by substituting Equation~\eqref{eq:TadmorEuler} into Equation~\eqref{eq:entropy_prod_ec2}, it becomes evident that in order to achieve an entropy-conserving FV discretization of the multi-ion GLM-MHD system, it is sufficient to identify the MHD numerical fluxes and non-conservative terms that satisfy
\begin{equation} \label{eq:TadmorReduced}
\jump{\entVar}_{(i,j)}^T 
\numfluxb{f}_{(i,j)}^{\ec, \supMHD}
+ \entVar^T_{j} {\Jan}^{\diamond, \ec}_{(j,i)}
- \entVar^T_{i} {\Jan}^{\diamond, \ec}_{(i,j)}
- \jump{\Psi^{\supMHD}}_{(i,j)} = 0.
\end{equation}

Contracting the local (non-conservative) part of the Godunov-Powell term yields
\begin{align}\label{eq:PowellphiContraction}
    \entVar^T \stateG{\phi}^{\Powell} =&
    \left( \sum_k \frac{2 r_k \rho_k \beta_k}{n_e e} \vec{v}_k \right) \cdot \vec{B}
    - \left( \sum_k 2 \beta_k \right) \vec{v}^+ \cdot \vec{B}
    + 2 \beta_+ \vec{v}^+ \cdot \vec{B}
    \nonumber\\
    =& 2 \left( \sum_k \beta_k \vec{v}^+_k \right) \cdot \vec{B}.
\end{align}
Moreover, contracting the term $\Jan^{\mathrm{Lor}}$ yields
\begin{align}\label{eq:PhiIonContraction}
    \entVar^T \Jan^{\mathrm{Lor}} =& 
    \left[
    \sum_k 2 \beta_k \left( v^+_{k,1} \left( \frac{1}{2} \|\vec{B}\|^2 + p_e \right) 
    - \left( \vec{v}^+_k \cdot \vec{B} \right) B_1 \right)
    \right]
    -2 \left( \sum_k \beta_k v^+_{k,1} \right) p_e
    \nonumber \\
    =&
    \left( \sum_k \beta_k v^+_{k,1} \right) \| \vec{B} \|^2
    - 2 \left( \sum_k \beta_k \vec{v}^+_k \right) \cdot \vec{B} B_1.
\end{align}

Since the multi-ion Godunov-Powell non-conservative term $\noncon^{\Powell}$ \eqref{eq:noncon_Powell} is very similar to the single-species MHD Godunov-Powell term, we adopt the central discretization proposed by \citet{Winters2016}, and used in \cite{Chandrashekar2016, Derigs2018,Bohm2018,Rueda-Ramirez2020,rueda2023entropy,rueda2024flux}, and extend it to the multi-ion framework.
The Godunov-Powell numerical non-conservative term then reads as
\begin{equation}
    \Jan^{\Powell\diamond, \ec}_{(i,j)} =
    \stateG{\phi}^{\Powell}_i \avg{B_1}_{(i,j)},
\end{equation}
which is consistent with the single-fluid GLM-MHD term.

We compute the contribution to the entropy production of the Godunov-Powell non-conservative term as 
\begin{align} \label{eq:PowellPhiContraction}
    \entVar_j^T \Jan^{\Powell\diamond, \ec}_{(j,i)}
    - \entVar_i^T \Jan^{\Powell\diamond, \ec}_{(i,j)}
    =& 
    2 \jump{\left( \sum_k \beta_k \vec{v}^+_k \right) \cdot \vec{B}}_{(i,j)} \avg{B_1}_{(i,j)}
    \nonumber\\
    =& 
    {
    2 \left( \sum_k \jump{\beta_k}_{(i,j)} \avg{\vec{v}^+_k  \cdot \vec{B}}_{(i,j)}\right) \avg{B_1}_{(i,j)}
    }
    \nonumber\\
    &+ {
    2 \left( \sum_k \avg{\beta_k}_{(i,j)} \jump{\vec{v}^+_k  \cdot \vec{B}}_{(i,j)}\right) \avg{B_1}_{(i,j)}
    }
\end{align}
where we used the identity
\begin{equation}\label{eq:identityjump}
    \jump{a\,b} = \jump{a}\avg{b} + \avg{a}\jump{b}.
\end{equation}
Finally,  since the GLM terms contracted with the entropy variables read
\begin{equation}
    \entVar^T \left({\color{red}\state{f}^{\mathrm{GLM}} + \Jan^{\mathrm{GLM}}}\right) = 
    c_h\left(
    -2\red{\psi} B_1\sum_k \beta_k + 
    2\beta_+B_1\red{\psi}
    +2\red{\psi}B_1\beta_+
    \right)
    -2\sum_k\beta_k v^+_1 {\color{red}\psi^2} + 2\beta_+v^+_1{\color{red}\psi^2} = \red{2c_h\beta_+\psi B_1},
\end{equation}
their contribution to the entropy flux potential is
\begin{equation}\label{eq:jumpEntropyFluxGLM}
    \jump{\entVar^T \left({\color{red}\state{f}^{\mathrm{GLM}} + \Jan^{\mathrm{GLM}}}\right)}
    = \red{2c_h\jump{\beta_+ \red{\psi} B_1}}.
\end{equation}

We remark that there are different ways to apply the identity \eqref{eq:identityjump}, which lead to slightly different terms.
In this work, we follow the procedure detailed by \citet{Derigs2018} and apply \eqref{eq:identityjump} to obtain terms that depend on the jumps of $\beta_k$, such that we can obtain discretizations consistent with Derigs et al.'s.
By isolating the jump of other quantities, we would arrive at different (also valid) discretizations.
For instance, if we applied the identity \eqref{eq:identityjump} to obtain terms that depend on the jumps of $\vec{B}$, we would obtain discretizations consistent with the single-fluid MHD fluxes of \citet{Chandrashekar2016}.

Note that $\noncon^{\rm{Lor}}$ \eqref{eq:noncon_Lor} is very similar to the single-fluid MHD magnetic flux.
In fact, it contains the exact same momentum components of the single-fluid MHD magnetic flux, but it has a local non-conservative part and new terms containing the electron pressure.
In the single-fluid MHD limit, the local non-conservative terms of the momentum equation become unity and the electron pressure is commonly assumed as negligible.
To keep the consistency with the single-fluid GLM-MHD discretization of \citet{Derigs2018}, we adopt the momentum components of the single-species EC flux (used as well in \cite{Winters2016,Chandrashekar2016,Winters2017}), and propose a consistent non-conservative FV discretization of the form
\begin{equation}
    \Jan^{\rm{Lor}\diamond, \ec}_{(i,j)} =
    \begin{pmatrix} 
0 \\[0.1cm]
\frac{r_k \rho_k}{n_e e} \\[0.1cm]
\frac{r_k \rho_k}{n_e e} \\[0.1cm]
\frac{r_k \rho_k}{n_e e} \\[0.1cm]
v^+_{k,1} \\[0.1cm]
\vec{0} \\[0.1cm]
0
\end{pmatrix}_i
\circ
\begin{pmatrix} 
0 \\[0.1cm]
\frac{1}{2} 
\avg{\|\vec{B}\|^2}
- \avg{B_1}^2 + \bar{p}_e\\[0.1cm]
- \avg{B_1} \avg{B_2} \\[0.1cm]
- \avg{B_1} \avg{B_3} \\[0.1cm]
\bar{p}_e \\[0.1cm]
\vec{0} \\[0.1cm]
0
\end{pmatrix}_{(i,j)},
\end{equation}
with a yet to be determined symmetric ``average'' of the electron pressure $\bar{p}_e$.

Contracting $\Jan^{\rm{Lor}\diamond, \ec}_{(i,j)}$ with the entropy variables reads
\begin{align}\label{eq:Phi2Contraction}
    \entVar_i^T \Jan^{\rm{Lor}\diamond, \ec}_{(i,j)} =&
    \left(\sum_k \beta_k v^+_{k,1}\right)_i \left( \avg{\|\vec{B}\|^2} + 2\bar{p}_e \right) 
    -2 \left( \sum_k \beta_k \vec{v}^+_k \right)_i \cdot \avg{\vec{B}}  \avg{B_1} 
    -2 \left( \sum_k \beta_k v^+_{k,1} \right)_i \bar{p}_e
    \nonumber \\
    =&
    \left( \sum_k \beta_k v^+_{k,1} \right)_i \avg{\| \vec{B} \|^2}
    - 2 \left( \sum_k \beta_k \vec{v}^+_k \right)_i \cdot  \avg{\vec{B}} \avg{B_1},
\end{align}
which shows that any consistent and symmetric ``average'' for the electron pressure can be used.
For instance, we can use the simple average $\bar{p}_e = \avg{p_e}$.

With the goal of being fully consistent with the single-species EC flux of \citet{Winters2016} and \citet{Derigs2018}, but also with the multi-ion GLM-MHD source term, we propose a numerical MHD flux of the form
\begin{equation}\label{eq:ec_flux_mhd}
    \numfluxb{f}^{\ec,\supMHD}_{(i,j)}
    + {\color{red}  \numfluxb{f}^{\ec, \supGLM}_{(i,j)}}
    =
    \begin{pmatrix}
    0 \\
    \vec{0} \\
    \numflux{f}^{\ec,\supMHD}_{E_k,(i,j)} + \red{2c_h \avg{\psi} \avg{B_1} - c_h\avg{\psi B_1}}\\
    \red{c_h \avg{\psi}} \\
    \avg{v^+_1}\avg{B_2} - \avg{v^+_2}\avg{B_1}\\
    \avg{v^+_1}\avg{B_3} - \avg{v^+_3}\avg{B_1}\\
    \red{c_h\avg{B_1}}
    \end{pmatrix},
\end{equation}
and will solve for the unknown term $\numflux{f}^{\ec,\supMHD}_{E_k,(i,j)}$.

The contraction of the term \eqref{eq:ec_flux_mhd} with the jump of entropy variables reads
\begin{align}\label{eq:entvar_fMHD}
    \jump{\entVar^T}_{(i,j)} \left(
    \numfluxb{f}^{\ec,\supMHD}_{(i,j)}
    + {\color{red}  \numfluxb{f}^{\ec, \supGLM}_{(i,j)}}
    \right)
    =&
    -2 \sum_k \jump{\beta_k} \left(
    \numflux{f}^{\ec,\supMHD}_{E_k,(i,j)}
    + \red{2c_h\avg{\psi}\avg{B_1} - c_h\avg{\psi B_1}}
    \right)\nonumber\\
    &+ 2 \jump{\beta^+ B_2} \left( \avg{v^+_1}\avg{B_2} - \avg{v^+_2}\avg{B_1}\right)
    \nonumber\\
    &+ 2 \jump{\beta^+ B_3} \left(
    \avg{v^+_1}\avg{B_3} - \avg{v^+_3}\avg{B_1}
    \right)
    \nonumber\\
    & + \red{2c_h\jump{\beta_+B_1}\avg{\psi} + 2c_h\jump{\beta_+\psi}\avg{B_1}}\nonumber\\
    =&
    -2 \sum_k \jump{\beta_k} \numflux{f}^{\ec,\supMHD}_{E_k,(i,j)} 
    + 2 \jump{\beta^+ \vec{B}}
    \cdot 
    \left( \avg{\vec{B}}     \avg{v^+_1}
    - \avg{\vec{v}^+} \avg{B_1}
    \right)\nonumber\\
    &+ \red{2c_h\jump{\beta_+\psi B_1}},
\end{align}
where the last GLM term is obtained using the identity~\eqref{eq:identityjump} as follows:
\begin{align*}
    &\red{-2\sum_k\jump{\beta_k}\left(
    2c_h\avg{\psi}\avg{B_1} - c_h\avg{\psi B_1}
    \right) 
    +2c_h\jump{\beta_+B_1}\avg{\psi} + 2c_h\jump{\beta_+\psi}\avg{B_1}}\\
    & = \red{-2\sum_k\jump{\beta_k}\left(
2c_h\avg{\psi}\avg{B_1} - c_h\avg{\psi B_1}
    \right) +
    2c_h
    \left(
    \sum_k\jump{\beta_k B_1}\avg{\psi} + \jump{\beta_k \psi}\avg{B_1}
    \right)
    }\\
    & = \red{-2c_h\sum_k\jump{\beta_k}\left(
2\avg{\psi}\avg{B_1} - \avg{\psi B_1}
    \right)} \\
    & \quad \quad\red{+
    2c_h
    \left(
    \sum_k\left(\jump{\beta_k}\avg{B_1} + \avg{\beta_k}\jump{B_1}\right)\avg{\psi} + \left(\jump{\beta_k}\avg{ \psi} + \avg{\beta_k}\jump{ \psi}
    \right)\avg{B_1}
    \right)
    }\\
    & = \red{-2c_h\sum_k\jump{\beta_k}\left(
2\avg{\psi}\avg{B_1} - \avg{\psi B_1}
    \right)
    +
    2c_h
    \left(
    \sum_k2\jump{\beta_k}\avg{B_1}\avg{\psi} 
    +
    \avg{\beta_k}\jump{\psi B_1}
    \right)
    }\\
    & = \red{2c_h\sum_k \jump{\beta_k \psi B_1} = 2c_h \jump{\beta_+ \psi B_1}}.
\end{align*}

Finally, taking into account that the non-conservative term $\noncon^{\mathrm{multi}}$ \eqref{eq:noncon_multi} contains terms with the same structure as the induction equation terms of \eqref{eq:ec_flux_mhd}, we assume a numerical non-conservative term of the form
\begin{align}\label{eq:phi3_num}
    \Jan^{\rm{multi}\diamond, \ec}_{(i,j)}
    =&
    (B_2)_i
    \begin{pmatrix}
    0 \\ \vec{0} \\ \avg{v^-_{k,1}} \avg{B_2} - \avg{v^-_{k,2}} \avg{B_1}  \\ \vec{0}\\0
    \end{pmatrix}
    + (B_3)_i
    \begin{pmatrix}
    0 \\ \vec{0} \\ \avg{v^-_{k,1}} \avg{B_3} - \avg{v^-_{k,3}} \avg{B_1}  \\ \vec{0}\\0
    \end{pmatrix}
    \nonumber\\
    =&
    \begin{pmatrix}
    0 \\ \vec{0} \\ 
    \vec{B}_i \cdot \avg{\vec{B}}
    \avg{v^-_{k,1}} -
    \vec{B}_i \cdot
    \avg{\vec{v}^-_{k}} \avg{B_1}  \\ \vec{0}\\0
    \end{pmatrix}
    =
    \begin{pmatrix}
    0 \\ \vec{0} \\ 
    \vec{B}_i \cdot \vec{h}^{\rm{multi}*}_{E_k,(i,j)}  \\ \vec{0} \\0
    \end{pmatrix},
\end{align}
where we have introduced a new auxiliary quantity $\vec{h}^{\rm{multi}*}_{E_k,(i,j)}$.

Contracting $\Jan^{\rm{multi}\diamond, \ec}$ with the entropy variables reads%
\begin{align}
    \entVar_i^T \Jan^{\rm{multi}\diamond, \ec}_{(i,j)} 
    =& -2 
    \left(
    \sum_k 
    \left(\beta_k \vec{B} \right)_i \avg{v^-_{k,1}} \right) \cdot \avg{\vec{B}}
    + 2 
    \left(
    \sum_k 
    \left( \beta_k \vec{B} \right)_i \cdot \avg{\vec{v}^-_{k}} \right) \avg{B_1},
\end{align}
and therefore we have
\begin{align}\label{eq:ent_phimulti_d}
    \entVar_j^T \Jan^{\rm{multi}\diamond, \ec}_{(j,i)}  - \entVar_i^T \Jan^{\rm{multi}\diamond, \ec}_{(i,j)} 
    =& -2 
    \left(
    \sum_k 
    \jump{\beta_k \vec{B}} \avg{v^-_{k,1}} \right) \cdot \avg{\vec{B}}
    + 2 
    \left(
    \sum_k 
    \jump{\beta_k \vec{B}} \cdot \avg{\vec{v}^-_{k}} \right) \avg{B_1}.
\end{align}

The next term contributing to the jump of the entropy potential is $\Jan^{\rm{multi}}$.
Its contraction with the entropy variables reads as
\begin{align}\label{eq:ent_phimulti}
    \entVar^T \Jan^{\rm{multi}} =& -2 \sum_k \beta_k
    \left(
    v^-_{k,1} B_2^2
    - v^-_{k,2} B_1 B_2
    + v^-_{k,1} B_3^2
    - v^-_{k,3} B_1 B_3
    \right)
    \nonumber\\
    =& {
    -2 
    \left(
    \sum_k \beta_k     v^-_{k,1} \right) 
    \| \vec{B} \|^2
    + 2 
    \left(
    \sum_k \beta_k     \vec{v}^-_{k} \right) \cdot \vec{B} B_1
    }.
\end{align}

The contribution of the GLM terms to the EC-fluxes vanishes:
\begin{align}
     \entVar^T_i \red{\Jan^{\rm{GLM}\diamond, \ec}_{(i,j)}} = \red{
     \left(
     -\sum_k 2\beta_k v_1^+\red{\psi} + 2\beta_+\psi v_1^+
     \right)_i
     \red{\avg{\psi}}_{(i,j)}} = 0,
\end{align}
and combining the GLM-terms in equations~\eqref{eq:entvar_fMHD} and~\eqref{eq:jumpEntropyFluxGLM} yields a zero-sum of all GLM terms.

It remains only to compute the term $\numflux{f}^{\ec,\supMHD}_{E_k,(i,j)}$.
Following the procedure of \citet{Derigs2018}, we apply the identity \eqref{eq:identityjump} to all terms that contribute to \eqref{eq:TadmorReduced}, in order to obtain terms that depend on the jump of ${\beta_k}$.
Now, we write only the terms that multiply $\jump{\beta_k}$ in \eqref{eq:TadmorReduced}:
\begin{itemize}
    \item From $\jump{\entVar}^T \numfluxb{f}^{\ec,\supMHD}_{(i,j)}$ (see \eqref{eq:entvar_fMHD}): 
    $-2 \numflux{f}^{\ec,\supMHD}_{E_k,(i,j)} + 2 \avg{\vec{B}} \cdot \vec{\numflux{f}}^{\ec,\supMHD}_{\vec{B},(i,j)}$
    \item From $\jump{\entVar^T \left( \Jan^{\Powell} + \Jan^{\mathrm{Lor}}\right)}$ (see \eqref{eq:PowellphiContraction} and \eqref{eq:PhiIonContraction}):
    $\avg{v^+_{k,1}  \| \vec{B} \|^2}$
    \item From $\entVar_j^T \Jan^{\Powell\diamond, \ec}_{(j,i)}
    - \entVar_i^T \Jan^{\Powell\diamond, \ec}_{(i,j)}$ (see \eqref{eq:PowellPhiContraction}): 
    $2 \avg{\vec{v}^+_k  \cdot \vec{B}}_{(i,j)} \avg{B_1}_{(i,j)}$
    \item From $\entVar_j^T \Jan^{\rm{Lor}\diamond, \ec}_{(j,i)}
    - \entVar_i^T \Jan^{\rm{Lor}\diamond, \ec}_{(i,j)}$ (see \eqref{eq:Phi2Contraction}):
    $\avg{v^+_{k,1} } \avg{\| \vec{B} \|^2}
    - 2 \avg{\vec{v}^+_k } \cdot  \avg{\vec{B}} \avg{B_1}$
    \item From $\jump{\entVar^T  \Jan^{\mathrm{multi}} }$ (see \eqref{eq:ent_phimulti}):
    $-2 
    \avg{v^-_{k,1}  \| \vec{B} \|^2}
    + 2 
    \avg{ \vec{v}^-_{k} \cdot \vec{B} B_1}$
    \item From $\jump{\entVar^T  \state{f}^{\supMHD}}$ (see \eqref{eq:advective_fluxes_1D}):
    $-2 
    \avg{v^+_{k,1} \|\vec{B}\|^2} 
    +2 \avg{B_1 \left(\vec{v}^+_k\cdot\vec{B}\right)}
    + 2 \avg{v^+_1 \| \vec{B} \|^2}
    - 2 \avg{B_1 \vec{v}^+ \cdot \vec{B}}$
    \item From $\entVar_j^T \Jan^{\rm{multi}\diamond, \ec}_{(j,i)}
    - \entVar_i^T \Jan^{\rm{multi}\diamond, \ec}_{(i,j)}$ (see \eqref{eq:ent_phimulti_d}):
    $-2 \avg{\vec{B}} \cdot \vec{h}^{\rm{multi}*}_{E_k,(i,j)}$
\end{itemize}

We obtain the entropy-conserving flux for the quantity $E_k$ by combining all terms into \eqref{eq:TadmorReduced}:
\begin{align} \label{eq:ecflux_Ek}
    \numflux{f}^{\ec,\supMHD}_{E_k,(i,j)} =&
    \avg{\vec{B}} \cdot \vec{\numflux{f}}^{\ec,\supMHD}_{\vec{B},(i,j)}
    - \frac{1}{2} 
    \avg{v^+_{k,1}  \| \vec{B} \|^2}
    +
    \avg{\vec{v}^+_k  \cdot \vec{B}}_{(i,j)} \avg{B_1}_{(i,j)}
    + \frac{1}{2} 
    \avg{v^+_{k,1} } \avg{\| \vec{B} \|^2}
    \nonumber\\
    &- 
    \avg{\vec{v}^+_k } \cdot  \avg{\vec{B}} \avg{B_1}
    {\color{green}+ 
    \avg{v^-_{k,1}  \| \vec{B} \|^2}
    - 
    \avg{ \vec{v}^-_{k} \cdot \vec{B} B_1}}
    \nonumber\\
    &{\color{green}+
    \avg{v^+_{k,1} \|\vec{B}\|^2} 
    - \avg{B_1 \left(\vec{v}^+_k\cdot\vec{B}\right)}
    - 
    \avg{v^+_1 \| \vec{B} \|^2}
    +
    \avg{B_1 \vec{v}^+ \cdot \vec{B}}}
    -
    \avg{\vec{B}} \cdot \vec{h}^{\rm{multi}*}_{E_k,(i,j)}
    \nonumber\\
    =&
    \avg{\vec{B}} \cdot \vec{\numflux{f}}^{\ec,\supMHD}_{\vec{B},(i,j)}
    - \frac{1}{2} 
    \avg{v^+_{k,1}  \| \vec{B} \|^2}
    +
    \avg{\vec{v}^+_k  \cdot \vec{B}}_{(i,j)} \avg{B_1}_{(i,j)}
    + \frac{1}{2} 
    \avg{v^+_{k,1} } \avg{\| \vec{B} \|^2}
    \nonumber\\
    &- 
    \avg{\vec{v}^+_k } \cdot  \avg{\vec{B}} \avg{B_1}
    -
    \avg{\vec{B}} \cdot \vec{h}^{\rm{multi}*}_{E_k,(i,j)},
\end{align}
where the terms in green cancel each other out.

It is easy to see that, in the single-species case ($N_i=1$), the entropy-conserving flux \eqref{eq:ecflux_Ek} for the only ion species of the multi-ion GLM-MHD system is consistent with the EC flux for the energy term of \citet{Derigs2018},
\begin{align}
    \numflux{f}^{\ec,\mathrm{MHD}}_{E,(i,j)} + \red{\numflux{f}^{\ec,\mathrm{GLM}}_{E,(i,j)}}
    =&
    \avg{\vec{B}} \cdot \vec{\numflux{f}}^{\ec,\mathrm{MHD}}_{\vec{B},(i,j)}
    - \frac{1}{2} 
    \avg{v_{1}  \| \vec{B} \|^2}
    +
    \avg{\vec{v}  \cdot \vec{B}}_{(i,j)} \avg{B_1}_{(i,j)}
    + \frac{1}{2} 
    \avg{v_{1} } \avg{\| \vec{B} \|^2}
    \nonumber\\
    &- 
    \avg{\vec{v} } \cdot  \avg{\vec{B}} \avg{B_1}
    + \red{2 c_h \avg{\psi}\avg{B_1}
    - c_h \avg{\psi B_1}},
\end{align}
because in the single species case we have $\vec{v} = \vec{v}_k = \vec{v}^+_k$ and $\vec{v}^-_k = 0$, and hence 
$\vec{\numflux{f}}^{\ec,\supMHD}_{\vec{B},(i,j)} = \vec{\numflux{f}}^{\ec,\mathrm{MHD}}_{\vec{B},(i,j)}$
and
$\vec{h}^{\rm{multi}*}_{E_k,(i,j)} = 0$.

We can also check for consistency with the 
multi-ion GLM-MHD energy flux term \eqref{eq:advective_fluxes_1D}.
To do that, we evaluate the EC flux for the quantity $E_k$ \eqref{eq:ecflux_Ek} with the same left and right states:
\begin{align*}
    \numflux{f}^{\ec,\supMHD}_{E_k,(i,i)} +  \red{\numflux{f}^{\ec,\rm{GLM}}_{E_k,(i,i)}}
    =& \vec{B}_i \cdot \vec{f}^{\supMHD}_{\vec{B},i}
    - \vec{B}_i \cdot \vec{h}^{\rm{multi}*}_{E_k,(i,i)}
    + \red{c_h \left(\psi B_1\right)_i}
    \\
    =&
    \left[ v^+_{1} \|\vec{B}\|^2 - B_1 \left(\vec{v}^+\cdot\vec{B}\right)  \right]_i
    -
    \left[ v^-_{k,1} \|\vec{B}\|^2 - B_1 \left(\vec{v}^-_k\cdot\vec{B}\right)  \right]_i
    + \red{c_h \left(\psi B_1\right)_i}
    \\
    =& \left[ 
    v^+_{k,1} \|\vec{B}\|^2 - B_1 \left(\vec{v}^+_k\cdot\vec{B}\right)
    \right]_i
    + \red{c_h \left(\psi B_1\right)_i}
    \\
    =&f^{\supMHD}_{E_k} + \red{f^{\rm{GLM}}_{E_k}}.
\end{align*}

\change{

\section{Cartesian 2D Discretization} \label{app:2d}
}
\q{c3r1}{

\change{
We obtain the two-dimensional DGSEM discretization of the multi-ion GLM-MHD system on Cartesian meshes with uniform grid size, $\Delta x = \Delta y$, by extending the one-dimensional variant using tensor-product basis expansions.
For brevity, we use the same polynomial degree in all three spatial directions, although different polynomial degrees can be used for different directions, as in \cite{RuedaRamirez2019,RuedaRamirez2019a}. The two-dimensional version of \eqref{eq:DGSEM} reads
\begin{align} \label{eq:DGSEM_2D}
J \omega_i \omega_j \dot{\state{u}}_{ij} 
+ &
\omega_j \left( 
\sum_{k=0}^N S_{ik} \left( \state{f}^{*1}_{(i,k)j} + \Jan^{\star 1}_{(i,k)j} \right)
- 
\delta_{i0} \left( \numfluxb{f}^1_{(0,L)j} + \Jan^{\diamond 1}_{(0,L)j} \right)
+ \delta_{iN} \left( \numfluxb{f}^1_{(N,R)j} + \Jan^{\diamond 1}_{(N,R)j} \right)
\right)
\nonumber \\
+ &
\omega_i 
\underbrace{
\Bigg(
\sum_{k=0}^N S_{jk} \left( \state{f}^{*}_{i(j,k)} + \Jan^{\star 2}_{i(j,k)} \right)
}_{\mathrm{Volume \, term}}
- 
\underbrace{
\delta_{j0} \left( \numfluxb{f}^2_{i(0,L)} + \Jan^{\diamond 2}_{i(0,L)} \right)
+ \delta_{jN} \left( \numfluxb{f}^2_{i(N,R)} + \Jan^{\diamond 2}_{i(N,R)} \right)
\Bigg)
}_{\mathrm{Surface \,  term}}
= \state{0},
\end{align}
where the 1D mapping Jacobian determinant $J = \Delta x / 2$ is again constant inside each element, and the main difference with the 1D discretization is that we now have fluxes and non-conservative terms in the $x$ and $y$ directions, denoted with the super-script $1$ and $2$, respectively.

The numerical fluxes and non-conservative terms in the $x$ direction are exactly the same ones derived for the 1D discretization.
For instance, the entropy-conserving fluxes and non-conservative terms are presented in Section \ref{sec:fvec}.

Following the same procedure detailed in Appendix \ref{app:ec_derivation}, we can derive numerical entropy-conserving fluxes and non-conservative terms that are consistent with both the multi-ion GLM-MHD terms \eqref{eq:multi-ion_mod_div} and the single-fluid GLM-MHD discretization of \citet{Derigs2018}.
The EC flux in the $y$ direction reads
\begin{equation}\label{eq:ECFlux_y}
\state{f}^{*\ec, 2}(\state{u}_L,\state{u}_R) =
\begin{pmatrix} 
\rho^{\ln}_{k} \avg{v_{k,2}} \\
\rho^{\ln}_{k} \avg{v_{k,2}}\avg{v_{k,2}}\\ 
\rho^{\ln}_{k} \avg{v_{k,2}} \avg{v_{k,2}} + \overline{p}_k  \\
\rho^{\ln}_{k} \avg{v_{k,2}} \avg{v_{k,3}}  \\
f^{\ec, \supEuler}_{E_k} + \blue{\numflux{f}^{\ec,\supMHD, 2}_{E_k}} + \red{2 c_h \avg{\psi} \avg{B_2} - c_h \avg{\psi B_2}}\\
    \blue{\avg{v^+_2}\avg{B_1} - \avg{v^+_1}\avg{B_2}}\\
    \red{c_h \avg{\psi}} \\
    \blue{\avg{v^+_2}\avg{B_3} - \avg{v^+_3}\avg{B_2}}\\
    \red{c_h \avg{B_2} }
\end{pmatrix}
\end{equation}
where the mean pressure is defined in \eqref{eq:pbar}, the Euler energy term $f^{\ec, \supEuler}_{E_k}$ is the same as in the $x$ direction \eqref{eq:EulerEnergyEC}, and MHD energy terms reads as
\begin{align*}
\blue{\numflux{f}^{\ec,\supMHD, 2}_{E_k,i(j,k)} =}&
    \blue{\avg{\vec{B}} \cdot \vec{\numflux{f}}^{\ec,\supMHD, 2}_{\vec{B},i(j,k)}
    - \frac{1}{2} 
    \avg{v^+_{k,2}  \| \vec{B} \|^2}
    +
    \avg{\vec{v}^+_k  \cdot \vec{B}}_{i(j,k)} \avg{B_2}_{i(j,k)}
    + \frac{1}{2} 
    \avg{v^+_{k,2} } \avg{\| \vec{B} \|^2}}
    \\
    & 
    \blue{-\avg{\vec{v}^+_k } \cdot  \avg{\vec{B}} \avg{B_2}
    -
    \avg{\vec{B}} \cdot \underbrace{\left( \avg{\vec{B}}
    \avg{v^-_{k,2}} -
    \avg{\vec{v}^-_{k}} \avg{B_2} \right)}_{\vec{h}^{\rm{multi}*2}_{E_k,i(j,k)}}},
\end{align*}
where $\vec{\numflux{f}}^{\ec,\supMHD, 2}_{\vec{B},i(j,k)}$ corresponds to the magnetic field components of the flux given in \eqref{eq:ECFlux_y}, i.e.,
\begin{equation}
\vec{\numflux{f}}^{\ec,\supMHD, 2}_{\vec{B},i(j,k)}
= 
\begin{pmatrix}
    \blue{\avg{v^+_2}\avg{B_1} - \avg{v^+_1}\avg{B_2}}, &
    \red{c_h \avg{\psi}}, &
    \blue{\avg{v^+_2}\avg{B_3} - \avg{v^+_3}\avg{B_2}}
\end{pmatrix}^T.
\end{equation}

As before, the black terms come from the Euler discretization, the blue terms come from the MHD discretization, and the red terms come from the GLM discretization.
The four non-conservative two-point terms that provide entropy conservation are the Godunov-Powell term,
\begin{equation}
\label{eq:fv_PhiGP_y}
    {
    \Jan^{\Powell\diamond, \ec, 2}_{i(j,k)} =
    \stateG{\phi}^{\Powell}_{ij} \avg{B_2}_{i(j,k)}},
\end{equation}
the Lorentz non-conservative term
\begin{equation}
\label{eq:fv_PhiLor_y}
{
    \Jan^{\rm{Lor}\diamond, \ec, 2}_{i(j,k)} =
    \begin{pmatrix} 
0 \\[0.1cm]
\frac{r_k \rho_k}{n_e e} \\[0.1cm]
\frac{r_k \rho_k}{n_e e} \\[0.1cm]
\frac{r_k \rho_k}{n_e e} \\[0.1cm]
v^+_{k,2} \\[0.1cm]
\vec{0} \\[0.1cm]
0
\end{pmatrix}_{ij}
\circ
\underbrace{
\begin{pmatrix} 
0 \\[0.1cm]
- \avg{B_2} \avg{B_1} \\[0.1cm]
\frac{1}{2} 
\avg{\|\vec{B}\|^2}
- \avg{B_2}^2 + \bar{p}_e\\[0.1cm]
- \avg{B_2} \avg{B_3} \\[0.1cm]
\bar{p}_e \\[0.1cm]
\vec{0} \\[0.1cm]
0 
\end{pmatrix}_{i(j,k)}
}_{\state{h}^{\rm{Lor}\diamond, \ec}_{i(j,k)}}
},
\end{equation}
with the arbitrary symmetric ``average'' of the electron pressure $\bar{p}_e$,
the ``multi-ion'' term,
\begin{align}
    \Jan^{\rm{multi}\diamond, \ec, 2}_{i(j,k)}
    =&
    (B_1)_{ij}
    \begin{pmatrix}
    0 \\ \vec{0} \\ \avg{v^-_{k,2}} \avg{B_1} - \avg{v^-_{k,1}} \avg{B_2}  \\ \vec{0} \\ 0
    \end{pmatrix}
    + (B_3)_{ij}
    \begin{pmatrix}
    0 \\ \vec{0} \\ \avg{v^-_{k,2}} \avg{B_3} - \avg{v^-_{k,3}} \avg{B_2}  \\ \vec{0}\\ 0
    \end{pmatrix}
    \nonumber\\
    =&
    \begin{pmatrix}
    0 \\ \vec{0} \\ 
    \vec{B}_{ij} \cdot \avg{\vec{B}}
    \avg{v^-_{k,2}} -
    \vec{B}_{ij} \cdot
    \avg{\vec{v}^-_{k}} \avg{B_2}  \\ \vec{0}\\ 0
    \end{pmatrix}
    =
        \begin{pmatrix}
        0 \\ \vec{0} \\ 
        \vec{B}_{ij} \cdot \vec{h}^{\rm{multi}*2}_{E_k,i(j,k)}  \\ \vec{0}\\ 0
        \end{pmatrix},
\end{align}
and the GLM term,
\begin{align} \label{eq:ECGLM_y}
\red{
        \Jan^{\rm{GLM}\diamond, \ec, 2}_{i(j,k)} =
    \begin{pmatrix} 
0 \\
\vec{0} \\
v^+_{2} \psi \\
\vec{0} \\
v^+_{2}
\end{pmatrix}_{ij}
\avg{\psi}_{i(j,k)}.
    }
\end{align}

}
}


%
%

\end{document}